\theoremstyle{definition}
\newtheorem{ex}{Example}[section]
\newtheorem*{ex*}{Example}
\newtheorem*{defn}{Definition}
\theoremstyle{plain}
\newtheorem{thm}{Theorem}[section]
\newtheorem{prop}[thm]{Proposition}
\newtheorem*{cor*}{Corollary}
\newtheorem{cor}[thm]{Corollary}
\newtheorem{lem}[thm]{Lemma}
\newtheorem{thm*}{Theorem}
\theoremstyle{remark}
\newtheorem*{rem}{Remark}
\numberwithin{equation}{section}
\DeclareMathOperator{\spec}{Spec}
\DeclareMathOperator{\End}{End}
\DeclareMathOperator{\Hom}{Hom}
\DeclareMathOperator{\Pic}{Pic}
\DeclareMathOperator{\WDiv}{WDiv}
\DeclareMathOperator{\CDiv}{CDiv}
\DeclareMathOperator{\Div}{div}
\DeclareMathOperator{\Deg}{Deg}
\DeclareMathOperator{\lcm}{lcm}
\def\C{\mathbb{C}}
\def\Q{\mathbb{Q}}
\def\P{\mathbb{P}}
\def\R{\mathbb{R}}
\def\Z{\mathbb{Z}}
\def\P{\mathbb{P}}
\def\N{\mathbb{N}}
\def\M{{\bf{M}}}
\def\i{{\texttt{\textit i}}}
\def\WDivT{\WDiv_T}
\def\CDivT{\CDiv_T}
\def\fan{\Sigma}
\def\z{{\bf z}}
\providecommand{\norm}[1]{\lVert#1\rVert}
\begin{document}

\title[Algebraic Stability and Degree Growth]
{Algebraic Stability and Degree Growth of \\
               Monomial Maps and Polynomial Maps}

\author{Jan-Li Lin}

\address{Department of Mathematics, Indiana University, Bloomington \\ IN 47405 \\ USA}

\email{janlin@indiana.edu}

\subjclass{}

\keywords{}

\begin{abstract}
Given a rational monomial map, we consider the question of finding a toric variety on which it
is algebraically stable. We give conditions for when such variety does or does not exist.
We also obtain several precise estimates of the degree sequences of monomial maps on $\P^n$.
Finally, we characterize polynomial maps which are algebraically stable on $(\P^1)^n$.
\end{abstract}

\maketitle

\tableofcontents

\section{Introduction}
We study the dynamical behavior of two family of maps, namely, monomial maps and polynomial maps.
In particular, we focus on two aspects: algebraic stability and degree growth.
For monomial maps, we use the theory of toric varieties as
the main tool. For polynomial maps, we focus on their dynamical behavior on the space
$(\P^1)^n=\underbrace{\P^1\times\cdots\times\P^1}_{n}\,$.

Given an $n\times n$ integer matrix $A=(a_{i,j})$, there is an associated monomial map $f_A:\C^n\to\C^n$ defined by
\[
\textstyle{f_A(x_1,\cdots,x_n)=( \prod_j x_j^{a_{1,j}},\cdots ,\prod_j x_j^{a_{n,j}} ).  }
\]
Monomial maps fit nicely into the framework of toric varieties and equivariant maps (also called toric maps) on them. In this paper,
we try to make extensive use of the toric method to study the dynamics of monomial maps.

The idea of applying the theory of toric varieties to monomial maps is in fact not new.
For example, Favre~\cite{Fa} used the
orbit-cone correspondence of the torus action to translate a criterion of algebraic stability
to a condition about cones in a fan,
 and uses it to classify monomial maps in the case of toric surfaces.
In order to generalize his result to higher dimension, one needs a good understanding on pulling back
cohomology classes under rational maps. So we start from
a formula of pulling back divisors in toric varieties (Theorem ~\ref{thm:pullback}).

We then define the notion of algebraic stability and prove a criterion similar to the one in ~\cite{Fa}.
Results about stability are proven using the criterion.
For example, we proved that every monomial polynomial map is algebraically stable on $(\P^1)^n$. Also,
we generalize some results of ~\cite{Fa} to higher dimension.

\setcounter{section}{5}
\setcounter{thm}{6}
\begin{thm}
Suppose that $A\in \M_n(\Z)$ is an integer matrix.
\begin{enumerate}
\item If there is a unique eigenvalue $\lambda$ of $A$ of maximal modulus, with algebraic multiplicity one;
      then $\lambda\in\R$, and there exists a simplicial toric birational model X (maybe singular)
and a $k\in\N$ such that $f_A^k$ is strongly algebraically stable on $X$.
\item If $\lambda,\bar{\lambda}$ are the only eigenvalues of $A$ of maximal modulus, also with algebraic
      multiplicity one, and if
      $\lambda= |\lambda|\cdot e^{2\pi\i\theta}$, with $\theta\not\in\Q$; then there is no toric birational model
      which makes $f_A$ strongly algebraically stable.
\end{enumerate}
\end{thm}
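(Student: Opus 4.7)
The plan is to invoke the toric criterion for (strong) algebraic stability established earlier in the paper, which reduces stability to a combinatorial condition on how the linear action of $A$ on $N_\R$ transforms the cones of $\Sigma$: $f_A$ is strongly algebraically stable on a simplicial $X_\Sigma$ precisely when, for every cone $\sigma\in\Sigma$, the image $A\sigma$ is contained in a cone of $\Sigma$ of the same dimension. Under this reformulation, part (1) becomes a constructive problem about fans and part (2) becomes a density/counting contradiction.

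For part (1), I first observe that since $A$ has real entries, non-real eigenvalues come in conjugate pairs, so a unique eigenvalue of maximal modulus must be real, i.e.\ $\lambda\in\R$. Let $v\in\R^n$ be a real eigenvector and split $N_\R=\R v\oplus E$ where $E$ is the $A$-invariant sum of the other generalized eigenspaces. Because $|\lambda|$ strictly exceeds the spectral radius of $A|_E$, every $u\notin E$ satisfies $A^ku/|\lambda|^k\to c_u v$ with $c_u\ne 0$, i.e.\ $A^k$ contracts $N_\R$ toward $\R v$. Using this, I would construct a simplicial fan $\Sigma$ whose rays include rational approximations to $\pm v$ together with lattice generators chosen in $E$, organized so that narrow attracting and repelling sectors surround $\pm v$ and the remaining cones lie close to $E$. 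Replacing $A$ by a sufficiently large iterate $A^k$, the contraction estimate forces each maximal cone's image inside one of these sectors and in fact inside a single cone of top dimension; simpliciality then reduces the dimension-preservation check on lower-dimensional cones to the corresponding check on their bounding rays.

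The main obstacle in part (1) is this combinatorial construction itself: one must make $\Sigma$ coarse enough that the iterated cone images fit inside single cones, while keeping it fine enough to be simplicial and to cover $N_\R$. The spectral gap between $|\lambda|$ and the other eigenvalue moduli supplies the asymptotic contraction, but converting it into an exact set-theoretic containment requires both taking $k$ large and choosing the rational approximations to $\pm v$ judiciously; allowing singularities (simplicial but not smooth) is what gives the combinatorial flexibility to realize the construction without running into a smoothness obstruction.

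For part (2), I would argue by contradiction: suppose some (necessarily complete) toric birational model $X_\Sigma$ makes $f_A$ strongly algebraically stable, so by the criterion $A$ sends each ray of $\Sigma$ into a ray of $\Sigma$, and by induction $A^m\rho\in\Sigma(1)$ for every $\rho\in\Sigma(1)$ and every $m\ge 0$. The $A$-invariant real plane $W\subset N_\R$ associated to the conjugate pair $\lambda,\bar\lambda$ carries, up to the scaling by $|\lambda|$, a rotation by the irrational angle $2\pi\theta$; hence for any nonzero $u\in W$ the normalized orbit $\{A^m u/\|A^m u\|\}$ is infinite. Since $\Sigma$ spans $N_\R$, at least one ray $\rho\in\Sigma(1)$ has nonzero $W$-projection, and by the strict dominance of $|\lambda|$ the normalized iterates $A^m\rho/\|A^m\rho\|$ converge toward $W$ while still undergoing the irrational rotation, so they realize infinitely many distinct directions in $\Sigma(1)$ — contradicting the finiteness of $\Sigma(1)$.
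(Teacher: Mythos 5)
Your opening restatement of the stability criterion is wrong, and this error is fatal for part~(2). You write that strong algebraic stability holds ``precisely when, for every cone $\sigma\in\Sigma$, the image $A\sigma$ is contained in a cone of $\Sigma$ of the same dimension.'' That condition (applied to rays) would force $A$ to permute the rays of $\Sigma$, i.e.\ it would force $f_A$ to be a dimension-preserving toric \emph{morphism}. The actual criterion, Theorem~\ref{thm:sas}, is far weaker and is iterative in nature: for every ray $\tau\in\Sigma(1)$ and every $n$, the \emph{cone closure} $\overline{A^n\tau}$ --- the smallest cone of $\Sigma$ containing $A^n\tau$, which may have any dimension --- must map regularly into $\Sigma$. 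A ray is allowed to land strictly inside a higher-dimensional cone; e.g.\ $A=\bigl(\begin{smallmatrix}2&1\\1&2\end{smallmatrix}\bigr)$ is strongly algebraically stable on $\P^2$ without permuting its three rays.

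In part~(2) your entire contradiction rests on the false consequence ``$A$ sends each ray of $\Sigma$ into a ray of $\Sigma$, so $A^m\rho\in\Sigma(1)$ for all $m$,'' from which you derive infinitely many directions among finitely many rays. Under the correct criterion, $A^m\rho$ may sweep out infinitely many directions while visiting only finitely many cones of $\Sigma$, so no contradiction arises this way. The paper's argument is necessarily subtler: it chooses $v_0$ in the relative interior of a two-dimensional cone of the sliced fan $\Sigma\cap\Gamma$, uses density of the rotation to find $k$ with $\tau_0=\R_{\ge0}v_0\subset\overline{A^k\tau}$, and then exploits that $\overline{A^k\tau}\cap\Gamma$ is a genuine two-dimensional wedge in $\Gamma$ which an irrational rotation cannot keep inside the finite fan $\Sigma\cap\Gamma$ under iteration. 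That ``intersect with $\Gamma$ and look at wedges'' step is the missing idea in your proposal.

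For part~(1) your intuition (spectral gap, contraction toward $\R v$, build a fan concentrated near $\pm v$, pass to a large iterate) matches the paper's, and the fact that $\lambda\in\R$ by conjugate-pair considerations is correct. However, the construction you sketch --- rays approximating $\pm v$ together with ``lattice generators in $E$,'' with sectors near $E$ --- is both vaguer and more complicated than needed, and the claim that ``each maximal cone's image [lands] inside a single cone of top dimension'' is again aimed at the wrong target (morphism-like behavior). The paper's construction is simpler and tailored to the correct criterion: choose integral $v_1,\dots,v_n$ spanning $N_\R$ with $v$ in the interior of the cone they generate and with $A^m(\R v_i)\to\R v$ projectively, then take the $(\P^1)^n$-type fan on the rays $\pm\R_{\ge0}v_i$. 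For $k$ large, every $A^k v_i$ lies in the interior of $\sigma_+$ (or $\sigma_-$), so by convexity $A^k\sigma_\pm\subset\sigma_\pm$; the cone closures $\overline{(A^k)^m\tau}$ stabilize at $\sigma_\pm$ and map regularly, giving strong algebraic stability of $f_A^k$. You should redo part~(1) against the correct criterion and redo part~(2) from scratch.
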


For the definition of (strongly) algebraically stable, see section~\ref{sec:alg_stab}.
We note that many of the results concerning stabilization of monomial maps
in this paper have been obtained independently by Mattias Jonsson
and Elizabeth Wulcan ~\cite{JW}.

Next, we focus on two spaces: the projective space $\P^n$, and the product of the projective line
$(\P^1)^n$.
For the projective space $\P^n$, the monomial map
$f_A$ induces a rational map $\P^n\dashrightarrow\P^n$, also denoted by $f_A$.
The pull back $f^*$ of a rational map $f:\P^n\dashrightarrow\P^n$ on $H^{1,1}(\P^n;\R)$
is given by the degree of $f$.
 Thus we consider the degree sequence
$\{\deg(f_A^k)\}_{k=1}^\infty$.
Results about the degree sequence of monomial maps can be found in ~\cite{BK} and ~\cite{HP}.

In particular, one can define the {\em asymptotic degree growth}
\[
\delta_1(f_A)=\lim_{k\to\infty} (\deg(f_A^k))^{\frac 1 k}.
\]
Hasselblatt and Propp (\cite[Theorem 6.2]{HP}) proved that $\delta_1(f_A)=\rho(A)$, the spectral radius of the matrix $A$.
We refine the above result and obtain the following description of the asymptotic behavior of the degree sequence for a general monomial map.

\setcounter{section}{6}
\setcounter{thm}{1}
\begin{thm}
Given an $n\times n$ integer matrix $A$ with nonzero determinant, assume that $\rho(A)$ is the
spectral radius of $A$.
Then there exist two positive constants $C_1\ge C_0>0$
and a unique integer $\ell$ with $0\le\ell\le n-1$, such that
\[
C_0\cdot k^\ell\cdot\rho(A)^k \le \deg(f_A^k) \le C_1\cdot k^\ell\cdot\rho(A)^k
\]
for all $k\in\N$.

In fact, $(\ell+1)$ is the size of the largest Jordan block of $A$ among the ones corresponding
to eigenvalues of maximal modulus.
\end{thm}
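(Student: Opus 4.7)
My plan is to split the theorem into (i) a combinatorial/toric expression for $\deg(f_A^k)$ in terms of the entries of $A^k$, and (ii) the classical Jordan-form asymptotic for powers of a matrix.

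First, using the identity $f_A^k=f_{A^k}$ of rational maps on $\P^n$ (valid because they agree on the dense torus and hence coincide as rational maps), together with the pullback formula of Theorem~\ref{thm:pullback} applied to the hyperplane class, I would extract a closed-form expression for $\deg(f_B)$ as a piecewise linear function of the entries of $B$. Concretely, one writes each coordinate of $f_B$ as a Laurent monomial, clears common denominators, homogenises, and reads off the resulting total degree; the outcome is a combinatorial quantity bounded above and below by any matrix norm, with constants depending only on $n$. In particular there exist $c_0,c_1>0$, depending only on $n$, such that
\[
c_0\,\|B\|_\infty \;\le\; \deg(f_B) \;\le\; c_1\,\|B\|_\infty
\qquad \text{for every } B\in\M_n(\Z).
\]
Specialising to $B=A^k$ reduces the theorem to the purely linear-algebraic statement that $\|A^k\|_\infty$ is squeezed between two positive constants times $k^\ell\rho(A)^k$.

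Second, I would invoke the Jordan decomposition over $\C$, $A=PJP^{-1}$. The entries of $J^k$ are exactly $\binom{k}{m}\mu^{k-m}$ for each eigenvalue $\mu$ and each $0\le m\le d-1$ within a Jordan block of size $d$. Taking the maximum over all blocks gives $\|J^k\|_\infty=\Theta(k^\ell\rho(A)^k)$, where $\ell+1$ is the size of the largest Jordan block among eigenvalues of maximum modulus. Equivalence of matrix norms transfers this to $\|A^k\|_\infty=\Theta(k^\ell\rho(A)^k)$, with constants depending on $P$ and $P^{-1}$ but not on $k$. Combining with the first step yields the asserted two-sided bound with $C_0=c_0D_0$ and $C_1=c_1D_1$. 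Uniqueness of $\ell$ is then immediate, since two distinct non-negative integer exponents $\ell\neq\ell'$ produce incomparable growth rates $k^\ell\rho(A)^k$ and $k^{\ell'}\rho(A)^k$.

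The main obstacle I expect is the lower bound in the Jordan step. The upper bound on $\|A^k\|_\infty$ is straightforward from entry-wise bounds on $J^k$ and the triangle inequality. The lower bound, however, must rule out miraculous cancellations under conjugation by $P$; the cleanest route is to test $A^k$ on a single vector, namely a generalized eigenvector $v$ at the top of a largest Jordan block for an eigenvalue of maximum modulus. Then $A^k v$ picks up a component of magnitude $\binom{k}{\ell}\rho(A)^{k-\ell}\asymp k^\ell\rho(A)^k$ along the generalized eigenvector at the bottom of that block, and this contribution cannot cancel against any other block because it lies in a distinct generalized eigenspace. This forces some entry of $A^k$ to be $\gtrsim k^\ell\rho(A)^k$, which via the first step gives the required lower bound on $\deg(f_A^k)$.
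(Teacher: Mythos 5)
Your plan is correct and essentially identical to the paper's: both reduce the theorem to (i) showing that $\deg(f_{A^k})=\deg(f_{B})$ with $B=A^k$ is comparable, with $k$-independent constants, to a matrix norm of $B$, and (ii) the Jordan-form asymptotic $\|A^k\|\asymp k^\ell\rho(A)^k$ where $\ell+1$ is the largest Jordan block among eigenvalues of maximal modulus. The paper packages step (i) by extending $\deg$ to a continuous, positively homogeneous functional $\nu$ on $\M_n(\R)$ and applying a compactness argument on a norm-annulus, with the crucial lower bound $\nu(M)>0$ for $M\ne 0$ coming from strict convexity of the ample support function on $\P^n$; your direct claim $c_0\|B\|_\infty\le\deg(f_B)\le c_1\|B\|_\infty$ encodes the same fact (and is where you would need that strict-convexity input), while your ``test a generalized eigenvector'' route to the Jordan lower bound is a valid alternative to the paper's simpler submultiplicativity estimate $\|J^k\|\le n^2\|P\|\|P^{-1}\|\|A^k\|$.
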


Moreover, if the matrix $A$ has some better property, then we can describe the degree sequence even
more precisely. This is the content of Theorem~\ref{thm:degseq1}, Theorem~\ref{thm:degseq2}, and the following
theorem.

\setcounter{thm}{7}
\begin{thm}
Assume that the matrix $A$ is diagonalizable,
and assume for each eigenvalue $\lambda$ of $A$ of maximum modulus, $\lambda/\bar{\lambda}$ is a root of unity.
Then there is a positive integer $p$, and $p$ constants $C_0,C_1,\cdots, C_{p-1}\ge 1$, such that
\[
\deg(f_A^{pk+l})=C_l\cdot |\lambda_1|^{pk+l} + O(|\lambda_2|^{pk+l}),
\]
where $l=0,1,\cdots,p-1$.
\end{thm}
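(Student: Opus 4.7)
The plan is to combine a piecewise-linear expression for $\deg(f_A^k)$ with the spectral decomposition of $A^k$. For a monomial map on $\P^n$, the pullback of the hyperplane class can be read off from the exponent matrix, giving a formula
\[
\deg(f_A^k) = \max_{1\le \alpha\le N}\ell_\alpha(A^k),
\]
where $\ell_1,\ldots,\ell_N$ is a fixed finite family of $\Z$-linear functionals on $\M_n(\Z)$ (arising from the positive and negative parts of rows of $A^k$ after homogenization). This is the mechanism underlying the bounds in Theorem~6.2, and I would make it explicit as a first step.

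Since $A$ is diagonalizable, write $A^k = \sum_{j=1}^n \lambda_j^k E_j$ with complex spectral projectors $E_j$, and let $S = \{j : |\lambda_j| = |\lambda_1|\}$ index the maximum-modulus eigenvalues. Then for each $\alpha$,
\[
\ell_\alpha(A^k) = \sum_{j\in S}\lambda_j^k\,\ell_\alpha(E_j) + O\!\left(|\lambda_2|^k\right),
\]
with the $O$-term uniform in $\alpha$ since the index set is finite. The hypothesis that $\lambda_j/\bar\lambda_j$ is a root of unity for $j\in S$ is equivalent to $\theta_j\in\Q$, where $\lambda_j = |\lambda_1|e^{2\pi\i\theta_j}$; setting $p$ equal to the least common multiple of the denominators of these $\theta_j$ gives $\lambda_j^p = |\lambda_1|^p$ for every $j\in S$. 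Writing $k = pm+l$ with $0\le l<p$, one obtains
\[
\ell_\alpha(A^{pm+l}) = |\lambda_1|^{pm+l}\,c_{\alpha,l} + O\!\left(|\lambda_2|^{pm+l}\right),\qquad c_{\alpha,l} := \sum_{j\in S}e^{2\pi\i l\theta_j}\ell_\alpha(E_j).
\]
Each $c_{\alpha,l}$ is real because $\ell_\alpha(A^k)\in\R$ for all $k$, and taking the maximum over $\alpha$ yields
\[
\deg(f_A^{pm+l}) = C_l\,|\lambda_1|^{pm+l} + O\!\left(|\lambda_2|^{pm+l}\right),\qquad C_l := \max_\alpha c_{\alpha,l}.
\]
Strict positivity $C_l>0$ follows from the lower bound $\deg(f_A^k)\ge C_0\,\rho(A)^k$ in Theorem~6.2 (taking $\ell=0$ in the diagonalizable case), and the refinement $C_l\ge 1$ I would deduce from the integrality of $\deg(f_A^{pm+l})$ together with the asymptotic.

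The main obstacle is making the piecewise-linear formula for the degree precise and verifying that the argmax in $\alpha$ stabilizes on each residue class $l\bmod p$. Once the spectral decomposition is in place, this reduces to standard finite-dimensional bookkeeping: for $m$ sufficiently large, the dominant term $|\lambda_1|^{pm+l}\,C_l$ exceeds the next-largest value $|\lambda_1|^{pm+l}\,c_{\alpha,l}$ by a strictly positive gap, so the $O\!\left(|\lambda_2|^{pm+l}\right)$ error cannot perturb the maximum. The only subtle case is when several $\alpha$'s realize the common value $C_l$ for some $l$; this affects which functional attains the max at finite $m$, but not the leading coefficient $C_l$ itself, so the stated asymptotic is unaffected.
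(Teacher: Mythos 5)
Your route is genuinely different from the paper's. The paper proves this by an inductive chain: Theorem~\ref{thm:degseq1} handles a unique positive real dominant eigenvalue by decomposing each ray generator $e_i$ into eigenvectors and arguing geometrically that the maximal cone containing $A^k e_i$ eventually stabilizes around the attracting eigendirection, which pins down the leading constant as $\sum_i -\psi(v_{i,1})$; Theorem~\ref{thm:degseq2} then passes to $A^2$ for the negative case, and the periodic statement passes to $A^p$ in the same way. You instead encode the degree as a maximum of a fixed finite family of linear functionals (valid, since $\nu$ is a sum of convex piecewise-linear pieces, hence itself a max of linear forms), apply the spectral decomposition of $A^k$ directly, and observe that the leading coefficient of a max of asymptotics is the max of the leading coefficients. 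This bypasses both the cone-stabilization argument and the induction through the intermediate theorems, and your remark that the argmax need not stabilize is exactly the point that makes the shortcut sound. What you lose is the paper's explicit formula for the constant.

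There is one step that does not work as written: obtaining $C_l\ge 1$ from ``integrality of $\deg(f_A^{pm+l})$ together with the asymptotic.'' When $|\lambda_1|>1$, integrality is vacuous here --- $\deg(f_A^{pm+l})\ge 1$ only forces $C_l\ge\lim_m |\lambda_1|^{-(pm+l)}=0$. The correct mechanism is the one the paper isolates in Corollary~\ref{cor:deg_bounds}: submultiplicativity of the degree sequence. If $\deg(f_A^k)/\rho(A)^k = r<1$ for some $k$, then $\deg(f_A^{kj})/\rho(A)^{kj}\le r^j\to 0$, contradicting the lower bound from Theorem~\ref{thm:deg_bounds}; hence $\deg(f_A^k)\ge\rho(A)^k$ for every $k$, and letting $m\to\infty$ along $k=pm+l$ gives $C_l\ge 1$. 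With that substitution, the rest of your plan --- the max-of-linear-functionals representation, splitting the spectrum into $S$ and its complement, choosing $p$ as the lcm of the denominators of the $\theta_j$, the reality of the $c_{\alpha,l}$, and the final passage to the maximum --- is correct.
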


Let us mention that the above theorems about the degree sequences of monomial maps
can be generalized to the case of weighted projective spaces.
On weighted projective spaces, we have the notion of {\em weighted degree} of a toric map, and their growth
under iterations follows the pattern as the degree growth of monomial maps in projective spaces.
This generalization is
suggested to us by Mattias Jonsson. We introduce weighted projective space briefly,
and explain the generalization in $\S$\ref{subsec:WPS}.

On $(\P^1)^n$, we obtain a concrete matrix representation for the pull back on the
Picard groups for general rational maps.
We apply the matrix representation to give another proof of the above theorem
of Hasselblatt and Propp about the first dynamical degree
of a monomial map (Theorem~\ref{thm:d1_mono}).

In the last subsection ($\S$\ref{subsec:stab_poly}) of this paper,
we study the stability of polynomial maps on $(\P^1)^n$.
As a result, we obtain the following characterization:

\setcounter{section}{7}
\setcounter{thm}{4}

\begin{thm}
Let $f=(f_1,\cdots,f_n)$ be a polynomial map.
\begin{enumerate}
\item
If each $f_j$ is dominated by a monomial term, the $f$ is
algebraically stable on $(\P^1)^n$.
\item
Assume that, for some iterate $f^N=(f_1^{(N)},\cdots,f_n^{(N)})$ of $f$, we have $\deg_{z_i}(f^{(N)}_j)>0$  for all $i,j=1,\cdots,n$.
Then $f$ being algebraically stable on $(\P^1)^n$ implies that each $f_j$ must have a dominant term.
\end{enumerate}
\end{thm}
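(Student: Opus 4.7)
The plan is to translate algebraic stability on $(\P^1)^n$ into a concrete matrix identity and then treat both directions by expanding compositions. From the earlier analysis in the paper, $\Pic((\P^1)^n)$ is freely generated by the divisors $H_i=\{z_i=\infty\}$, and for a polynomial map $f=(f_1,\ldots,f_n)$ the pullback on $\Pic$ is encoded by the matrix $M=(M_{ji})$ with $M_{ji}=\deg_{z_i}(f_j)$. Let $M^{(k)}$ denote the pullback matrix of $f^k$; algebraic stability is equivalent to $M^{(k)}=M^k$ for every $k\ge 1$, while in general only the entrywise inequality $M^{(k+1)}\le M\cdot M^{(k)}$ holds. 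So both parts reduce to tracking when this inequality is sharp.

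For part (1), I would induct on $k$ to show that each $f_j^{(k)}$ is dominated by a monomial $m_j^{(k)}$ with exponent vector $(M^k)_{j\cdot}$ and nonzero coefficient $C_j^{(k)}$. The base case is the hypothesis. For the inductive step, expand
\[
f_j^{(k+1)}=\sum_{\alpha}c_\alpha\prod_l\bigl(f_l^{(k)}\bigr)^{\alpha_l},
\]
where $\alpha$ ranges over exponent vectors of monomials in $f_j$, so $\alpha_l\le M_{jl}$. Each monomial produced in the expansion has exponent coordinatewise at most $\sum_l\alpha_l(M^k)_{l\cdot}\le(M^{k+1})_{j\cdot}$, and the only contribution attaining this coordinatewise maximum comes from pairing $\alpha=M_{j\cdot}$ with the dominant monomial of each factor $(f_l^{(k)})^{M_{jl}}$, yielding the nonzero leading coefficient $c_{m_j}\prod_l(C_l^{(k)})^{M_{jl}}$. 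This shows $\deg_{z_i}(f_j^{(k+1)})=(M^{k+1})_{ji}$ for every $i$, which is exactly $M^{(k+1)}=M^{k+1}$.

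For part (2), I would argue the contrapositive. Assume some $f_j$ lacks a dominant term, so that for every exponent vector $\alpha$ appearing in $f_j$ there is an index $k(\alpha)$ with $\alpha_{k(\alpha)}<M_{j,k(\alpha)}$. Fix $i$ and look at $f^{(N+1)}_j=f_j\circ f^{(N)}=\sum_\alpha c_\alpha\prod_l(f_l^{(N)})^{\alpha_l}$. Each summand has $z_i$-degree bounded by $\sum_l\alpha_l M^{(N)}_{li}$, and using the positivity hypothesis $M^{(N)}_{k(\alpha),i}>0$ together with $\alpha_{k(\alpha)}<M_{j,k(\alpha)}$ one gets the strict inequality $\sum_l\alpha_l M^{(N)}_{li}<\sum_l M_{jl}M^{(N)}_{li}=(M\cdot M^{(N)})_{ji}$. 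Since only finitely many such $\alpha$ appear, the maximum also satisfies the strict inequality, giving $\deg_{z_i}(f_j^{(N+1)})<(M\cdot M^{(N)})_{ji}$; but algebraic stability would force $M^{(N+1)}=M\cdot M^{(N)}$ coordinatewise, a contradiction.

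The main obstacle will be the uniqueness of the leading contribution in part (1): one must verify that no other $\alpha$ produces a monomial of the same top exponent that could cancel against it. The argument relies on the observation that equality $\sum_l\alpha_l(M^k)_{l\cdot}=(M^{k+1})_{j\cdot}$ in every coordinate forces $\alpha_l=M_{jl}$ for every $l$ with $(M^k)_{l\cdot}\ne 0$; provided no $f_l^{(k)}$ has collapsed to a constant, this pins $\alpha$ to $M_{j\cdot}$. The degenerate case in which some component becomes constant is easily handled by restricting to the subspace of variables still in play and recognizing that a row of zeros in $M^k$ makes that variable irrelevant to the stability question.
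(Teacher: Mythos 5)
Your proof is correct and follows essentially the same route as the paper's: reduce the statement to the matrix identity $\Deg(f^k)=\Deg(f)^k$, prove (1) by inducting on the claim that every iterate retains a dominant term in each component (so the degree matrices multiply), and prove (2) by the contrapositive, using the positivity of $\deg_{z_i}(f_j^{(N)})$ to force a strict degree drop in $f_j\circ f^N$ whenever some $f_j$ lacks a dominant term. You flag more explicitly than the paper the potential for cancellation when some component $f_l^{(k)}$ degenerates to a constant (a case both arguments implicitly exclude, as it cannot occur for a dominant $f$), and your use of a monomial-dependent index $k(\alpha)$ in part (2) is a cleaner formulation of the paper's ``without loss of generality'' step.
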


Here we say that a polynomial $f_j(z_1,\cdots,z_n)$ is dominated by the monomial $\mu$ if the coefficient of $\mu$
in $f_j$ is non-zero, and $\deg_{z_i}(f_j)=\deg_{z_i}(\mu)$ for all variables $z_i$, $i=1,\cdots,n$.


{\bf Acknowledgement.}
The author would like to thank Professor Eric Bedford for suggesting
the problem and for many invaluable discussions. Without
Professor Bedford's continuous support this work would have been impossible.
He would also like to thank Jeff Diller, Mattias Jonsson, and Charles Favre
for useful discussions and suggestions.

\setcounter{section}{1}


\section{Toric varieties}

In this section, we give a brief survey of basic definitions and properties of toric varieties.
For more detail, we refer the readers to ~\cite{Dan} or ~\cite{Fu}.

\subsection{Cones and affine toric varieties}

Let $N\cong\Z^n$ be a lattice of rank $n$, and $N_\R := N\otimes_\Z \R\cong\R^n$ be the associated real
vector space. A (convex) {\em polyhedral cone} in $N_\R$ is a subset of the form
\[
\sigma = \{ \sum_{i=1}^k r_i v_i\ |\ r_i\in\R_{\ge 0}, v_i\in N_\R \}
\]
for some finite set of vectors $v_1,\cdots,v_k$. In the case $k=0$, we make the convention that
$\sigma =\{0\}$, the cone containing only the origin.

The {\em dimension} of a cone is the dimension of the $\R$-linear subspace spanned by the generating set.
A cone is {\em strongly convex} if it does not contain any line through the
origin. A cone is {\em rational} if we can choose the generators $v_1,\cdots,v_k$ from the lattice $N$.
In what follows, by a cone we always mean ``a strongly convex, rational polyhedral cone''.

From the lattice $N$ we can form the {\em dual lattice} $M:=\Hom_\Z(N,\Z)$, with dual pairing denoted by
$\langle\ ,\, \rangle$. It is a lattice in the
dual vector space
\[
M_\R:=M\otimes_\Z \R\cong \Hom_\R(N_\R,\R)=N_\R^\vee.
\]

The {\em dual cone}
$\sigma^\vee$ of $\sigma$ is defined by
\[
\sigma^\vee = \{ u\in M_\R\ |\ \langle u,v \rangle \ge 0 \text{ for all } v\in\sigma\}.
\]

The intersection $S_\sigma=\sigma^\vee\cap M$ is a finitely generated monoid by Gordan's lemma.
The affine variety $U_\sigma:=\spec( \C[S_\sigma] )$ of the ring $\C[S_\sigma]$ is called
the {\em affine toric variety} associated to the cone $\sigma$. More concretely, a closed point in $U_\sigma$
corresponds to a semigroup morphism $(S_\sigma,+)\to (\C,\cdot)$ which sends $0\in S_\sigma$ to $1\in\C$.

\begin{ex}
\label{ex:C2}
Let $N=\Z^2$, and let $\sigma$ be the cone in $N_\R\cong\R^2$ generated by
$e_1=(1,0)$ and $e_2=(0,1)$.
It is easy to see that $S_\sigma$ is the monoid generated by the dual basis $e_1^*, e_2^*$, and
$\C[S_\sigma]\cong \C[x,y]$.
Thus the affine toric variety
$U_\sigma\cong\spec\C[x,y]=\C^2$.

\end{ex}

\begin{ex}
More generally, let $N=\Z^n$, and let $\sigma$ be the cone in $N_\R\cong\R^n$ generated by
the standard basis $e_1,\cdots, e_n$. Then the affine toric variety
$U_\sigma\cong\C^n$.
\end{ex}

Now we are going to introduce some definitions about cones.
\begin{itemize}
\item One dimensional cones are also called
{\em rays}. On each ray, there is a unique nonzero integral point of the smallest norm ; it is called {\em the ray generator}.

\item A cone is {\em simplicial} if it is generated by linearly independent vectors.

\item A cone is {\em smooth} if it is generated by part of a basis for the lattice $N$.
\end{itemize}
We remark here that a cone $\sigma$ is
smooth if and only if the corresponding affine variety $U_\sigma$ is smooth.

\subsection{Fans and general toric varieties}
A fan $\fan$ in $N_\R$ is a set of cones in $N_\R$ satisfying the following two conditions:
\begin{enumerate}
\item each face of a cone in $\fan$ is a cone in $\fan$.
\item the intersection of two cones in $\fan$ is a face of each (hence also in $\fan$).
\end{enumerate}
Unless otherwise stated, we will assume that fans are {\em finite}, i.e., they contain finitely many cones.

From a fan $\fan$, we can construct the {\em toric variety} $X(\fan)$ corresponding to $\fan$.
First, we take the
disjoint union $\coprod_{\sigma\in\fan} U_\sigma$, then we glue them as follows. For cones
$\sigma,\tau\in\fan$, the intersection $\sigma\cap\tau$ is a face of each. Thus we have open immersions
\[
U_\sigma \hookleftarrow U_{\sigma\cap\tau} \hookrightarrow U_\tau.
\]
For each pair of cones $\sigma, \tau \in \fan$, we glue $U_\sigma$ and $U_\tau$
along the open subvariety $U_{\sigma\cap\tau}$. The gluing data
will be compatible, and the resulting variety is the toric variety $X(\fan)$.

We use the notion $\fan(k)$ to denote the set of all $k$-dimensional cones of $\fan$.
The {\em support} of a fan $\fan$, denoted $|\fan|$, is the union of its cones, i.e.,
$|\fan|:=\cup_{\sigma\in\fan} \sigma$. A fan $\fan$ is {\em complete} if $|\fan|=N_\R$.
A fan is complete if and only if the corresponding toric variety is a complete variety, i.e.,
the underlying topological space is compact in the classical topology.
For a fan $\fan$, the toric variety $X(\fan)$ is smooth if and only if every cone in $\fan$ is smooth.
To check smoothness, it is enough to check for all {\em maximal cones} in $\fan$, i.e., cones that are
not proper faces of any other cone.

\begin{ex}
Let $N=\Z^2$, and let $e_0=(-1,-1)=-(e_1+e_2)$. Let $\tau_i$ be the ray generated by $e_i$ for $i=0,1,2$,
and $\sigma_{ij}$ be the cone generated by $e_i$ and $e_j$. The set
\[
\fan = \{\ \{0\}, \tau_0, \tau_1, \tau_2, \sigma_{12}, \sigma_{02}, \sigma_{01}\ \}
\]
is a fan, and the corresponding toric variety is the projective plane $\P^2$. In fact, if we set the homogeneous
coordinate of $\P^2$ as $[w;x;y]$, then we can identify $U_{\sigma_{12}}$ as
$\spec\C[\frac x w,\frac y w]\cong\{[w;x;y]\in\P^2 | w\ne 0\}$. Similarly,
$U_{\sigma_{02}}\cong \spec\C[\frac y x,\frac w x]\cong\{[w;x;y]\in\P^2 | x\ne 0\}$, and
$U_{\sigma_{01}}\cong \spec\C[\frac x y,\frac w y]\cong\{[w;x;y]\in\P^2 | y\ne 0\}$.

\begin{figure}
\centerline{  \includegraphics{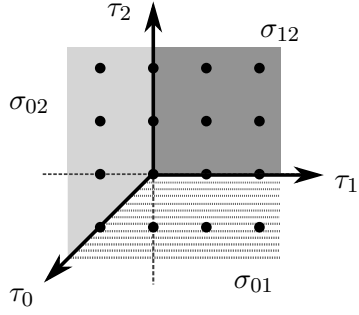} }
\begin{picture}(0,0)(0,0)
\put(-65,7){\makebox{$\tau_0$} }
\put(58,50){\makebox{$\tau_1$} }
\put(30,107){\makebox{$\sigma_{12}$} }
\put(-28,115){\makebox{$\tau_2$} }
\put(20,12){\makebox{$\sigma_{01}$} }
\put(-65,80){\makebox{$\sigma_{02}$} }
\end{picture}
\caption{The fan structure of $\P^2$.}
\end{figure}
\end{ex}

\begin{ex}
More generally, let $N=\Z^n$, $e_1,\cdots,e_n$ be the standard basis of $N$, and $e_0=-(e_1+\cdots +e_n)$.
For any proper subset $I$ of the set ${\bf n}=\{ 0,1,\cdots,n\}$, i.e., $I\subsetneq {\bf n}$,
let $\sigma_I$ be the cone generated
by $\{e_i|i\in I\}$.
Then the set $\fan=\{\sigma_I| I\subsetneq {\bf n}  \}$ forms a fan. The toric variety associated to the fan is the
projective space $X(\fan)\cong \P^n$.
\end{ex}

\begin{ex}
\label{ex:P1n}
In this example, we will construct a fan $\fan$ that corresponds to the product of the projective line
$(\P^1)^n=\underbrace{\P^1\times\cdots\times\P^1}_{n}\,$. There are $2n$ rays in $\fan$. They are generated
by the standard basis vectors $e_1,\cdots,e_n$ and their negatives $-e_1,\cdots,-e_n$.
The maximal cones, i.e., $n$-dimensional cones, are generated by vectors of the form
$\{ (s_1 e_1),\cdots, (s_n e_n) \}$, where $s_i\in\{+1,-1\}$ are the signs. All other cones in $\fan$
are faces of some maximal cone. For example, the fan for $\P^1\times\P^1$ has four maximal cones, generated by
$\{e_1,e_2\} , \{-e_1,e_2\}, \{e_1,-e_2\}$, and $\{-e_1,-e_2\}$, respectively.
\end{ex}

\subsection{The orbits of the torus action}

Every toric variety $X(\fan)$ is equipped with a torus action, thus $X(\fan)$ can be written
as the disjoint union of the orbits.
The orbits are in 1-1 correspondence with cones in the fan $\fan$ in a nice way, as follows.
First, for each cone $\tau\in\fan$, there is a distinguished point $x_\tau\in U_\tau$.
It is the closed point corresponding to the following semigroup morphism $S_\sigma\to\C$.
\[
u\longmapsto \begin{cases} 1  &   \text{if  $u\in\sigma^\bot$,} \\
                       0  &   \text{otherwise.}  \end{cases}
\]
Then we define $O_\tau$ to be the orbit of $x_\tau$ under the torus action. The only closed orbits are
fixed points, which correspond to the $n$-dimensional cones. We can take the closure of an orbit
in the toric variety, it is called the {\em orbit closure} of $\tau$, denoted by $V(\tau)$. An orbit closure
itself is a toric variety, it is a closed, toric subvariety of the original toric variety.


\section{Toric maps}

Suppose $A:N\to N'$ is a homomorphism of lattices, $\fan$ is a fan in $N$, and $\fan'$ is a fan in $N'$.

\begin{defn}
Given a cone $\sigma\in\fan$, we say that $\sigma$ {\em maps regularly} to $\fan'$ by $A$ if there is a cone
$\sigma'\in\fan'$ such that $A(\sigma)\subseteq\sigma'$. In this case, we call the smallest such cone in $\fan'$
the {\em cone closure} of the image of $\sigma$, and denote it by $\overline{A(\sigma)}$.
\end{defn}

If $A:N\to N'$ is a homomorphism such that every cone of $\fan$ maps regularly to $\fan'$, then $A$
induces a morphism of varieties $f_A:X(\fan)\to X(\fan')$.
Furthermore, $f_A$ will be {\em equivariant}
under the torus action. Conversely, every equivariant morphism $X(\fan)\to X(\fan')$ is induced by a
homomorphism of lattices satisfying the above property.
Equivariant morphisms will map orbits to orbits. If $\sigma\in\fan$, then $f_A$ maps $O_\sigma\subset X(\fan)$ to
$O_{\overline{A(\sigma)}}\subset X(\fan')$.

More generally, any homomorphism of lattices $A:N\to N'$ induces an {\em equivariant rational map}
$f_A:X(\fan)\dashrightarrow X(\fan')$. On a complete toric variety, $f_A$ is {\em dominant} if and only if
$A_\R=(A\otimes\R):N_\R\to N'_\R$ is surjective.
In this paper, we will study the dynamics of dominant, toric rational self maps on complete toric varieties.

\begin{ex}
Let the matrix
$A=\bigl( \begin{smallmatrix} a&b\\ c&d \end{smallmatrix} \bigr)\in M_2(\Z) $, and identify
$N\cong\Z^2$ as column vectors.
Then $A:N\to N$ is given by multiplying column vectors by $A$ on the left.
Let us see what $f_A$ does on the affine toric variety $\C^2$ we constructed in Example~\ref{ex:C2}.
Notice that $A$ induces the linear map $^t A$ on $M$, which
sends $e_1^*\mapsto a\cdot e_1^*+b\cdot e_2^*$ and $e_2^*\mapsto c\cdot e_1^*+d\cdot e_2^*$.
The isomorphism $\C[S_\sigma]\cong\C[x,y]$ is
given by $e_1^*\mapsto x$ and $e_2^*\mapsto y$, thus $f_A$ is the map $x\mapsto x^a y^b$ and $y\mapsto x^c y^d$.
\end{ex}

\begin{ex}
Let $A=(a_{i,j})$ be an $n\times n$ integer matrix, then using a similar argument as the above example, we
can see that the map $f_A:\C^n \dashrightarrow \C^n$ is given by
\[
\textstyle{f_A(x_1,\cdots,x_n)=( \prod_j x_j^{a_{1,j}},\cdots ,\prod_j x_j^{a_{n,j}} ).  }
\]
So we know on $\C^n$, toric rational self maps are exactly monomial maps.
\end{ex}

\subsection{Toric endomorphisms}
Before we start to study the dynamics of toric rational self maps, one might ask:
what do we know about toric morphisms from a toric variety to itself?
The following property provides an answer. It turns out that those morphisms have
quite simple structure.

Let $\fan$ be a complete fan in $N_\R$.  Let $A :N\to N$ be a homomorphism of lattice that maps each cone of $\fan$
regularly to $\fan$, then $f_A:X(\fan)\to X(\fan)$ is a toric morphism. Also assume that $f_A$ is dominant, i.e.
$A_\R: N_\R\to N_\R$ is surjective. Moreover, suppose that $v_1,\cdots,v_d$ are the ray generators of $\fan$.

\begin{prop}
There is a positive integer $k$ such that $A^k v_i = a_i v_i$ for some positive integers $a_i$,
$i=1,\cdots,d$. In particular, the iteration $f_A^k:X(\fan)\to X(\fan)$ maps every orbit onto itself.
\end{prop}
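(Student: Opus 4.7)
The plan is to show that the hypotheses force $A$ to permute the rays of $\fan$ (each up to a positive integer scalar), and then to take $k$ to be the order of the induced permutation. First I would observe that $A_\R : N_\R \to N_\R$, being a surjection of a finite-dimensional vector space, is in fact bijective, and that since $X(\fan)$ is complete the morphism $f_A$ is proper; combined with dominance this yields that $f_A$ is surjective. Using equivariance together with the orbit--cone correspondence, every orbit of $X(\fan)$ lies in the image of $f_A$, so for each $\tau \in \fan$ there is some $\sigma \in \fan$ with $\overline{A(\sigma)} = \tau$. When $\tau$ is a ray $\rho_j$, the containment $A(\sigma) \subseteq \rho_j$ and the injectivity of $A_\R$ give $\dim \sigma = \dim A(\sigma) \le 1$; the possibility $\sigma = \{0\}$ is ruled out (as $\overline{A(\{0\})} = \{0\}$), so $\sigma$ is itself a ray $\rho_i$ and $A v_i$ is a positive integer multiple of $v_j$.

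Next I would package these ray preimages into a permutation. The assignment $\rho_i \mapsto \overline{A(\rho_i)}$ is a function from the $d$ rays of $\fan$ to the set of cones of $\fan$ whose image, by the preceding step, already contains all $d$ rays of $\fan$. Pigeonhole then forces this function to map rays to rays and to be a bijection $\pi$ on $\fan(1)$. Thus $A v_i = a_i v_{\pi(i)}$ with $a_i \in \Z_{>0}$, and taking $k$ to be the order of $\pi$ in the symmetric group gives
\[
A^k v_i = b_i v_i, \qquad b_i := \prod_{s=0}^{k-1} a_{\pi^s(i)} \in \Z_{>0},
\]
which is the first assertion.

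For the orbit statement, since $A^k$ fixes each ray of $\fan$ setwise and every cone of $\fan$ is the cone generated by its ray generators, $A^k(\sigma) = \sigma$ for all $\sigma \in \fan$; hence $\overline{A^k(\sigma)} = \sigma$ and $f_A^k(O_\sigma) \subseteq O_\sigma$. To see onto-ness, recall that $O_\sigma$ is a torus with cocharacter lattice $N(\sigma) = N/(\R\sigma \cap N)$, and that $f_A^k$ restricted to $O_\sigma$ is the homomorphism of tori induced by the endomorphism of $N(\sigma)$ coming from $A^k$ (well-defined because $A^k$ preserves $\R\sigma$). Since $A_\R^k$ is bijective, so is its induced map on $N(\sigma)_\R$; the induced endomorphism of the lattice $N(\sigma)$ therefore has finite cokernel, and the corresponding endomorphism of the torus $O_\sigma$ is an isogeny, in particular surjective. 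Thus $f_A^k(O_\sigma) = O_\sigma$.

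The principal obstacle is the first step: the hypothesis that every cone of $\fan$ maps regularly does not, a priori, force each ray to map to a ray, since $A v_i$ could in principle lie in the relative interior of a higher-dimensional cone. Surjectivity of $f_A$ (from properness plus dominance) together with the orbit--cone correspondence is the tool that exhibits a ray preimage for each ray, after which pigeonhole closes the gap.
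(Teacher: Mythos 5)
Your proof is correct, but it reaches the permutation of rays by a genuinely different path than the paper's. The paper's proof stays entirely on the fan: it observes that $A_\R$, being a bijective linear map, preserves the dimension of cones, and concludes that $A_\R$ permutes the cones of $\fan$; the implicit point (left to the reader) is that the image fan $\{A_\R(\sigma) : \sigma \in \fan\}$ refines $\fan$ by the regularity hypothesis and, having the same number of cones in each dimension, must equal $\fan$. You instead bring in the geometry of the morphism: properness of $f_A$ (from completeness of $X(\fan)$) plus dominance gives surjectivity, and then the orbit--cone correspondence forces every ray of $\fan$ to be $\overline{A(\rho_i)}$ for some ray $\rho_i$, after which pigeonhole on the $d$ rays closes the argument. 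This directly handles the worry you flag at the end---that $A v_i$ could a priori lie in the relative interior of a higher-dimensional cone---which the paper's phrasing glosses over. You are also more explicit about the ``onto'' part of the orbit statement, producing the induced isogeny of the orbit torus, where the paper simply appeals to the general description of images of orbits under a toric morphism. Both arguments are sound; the paper's is shorter and purely combinatorial, while yours is more self-contained and transparently justified at each step.
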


\begin{proof}
We know that $A$ maps each cone $\sigma$ into some other cone $\sigma'$. Since $A_\R$ is
a surjective endomorphism of $N_\R$, it is indeed an $\R$-linear automorphism. Thus $A_\R$ will preserve
the dimension of cones in $\fan$. As a consequence, what $A_\R$ does on the cones of $\fan$ is
just permuting them (preserving the dimension). Therefore, for some integer $k$, $A^k_\R$ would fix
every cone in $\fan$. In particular, for one dimensional cones, we know that
$A^k_\R(\R_{\ge 0}\cdot v_i)=\R_{\ge 0}\cdot v_i$.
Furthermore, since $A:N\to N$ and $v_i\in N$, we deduce that $A(v_i)=a_i v_i$ for some positive
integer $n_i$.
Finally, the last sentence is an immediate consequence of the first sentence and the description of
images of orbits under a morphism.
\end{proof}

\begin{ex}
Given non-zero integers $a_i$, the endomorphism of $(\P^1)^n = \P^1\times\cdots\times\P^1$
coming from the monomial map of $\C^n$ of the form
$f(x_1,x_2\cdots,x_n)=(x_1^{a_1}, x_2^{a_2},\cdots, x_n^{a_n})$
is toric. Conversely, every toric endomorphism of $(\P^1)^n$ is of the form
\[
f(x_1,x_2\cdots,x_n)=(x_{s(1)}^{a_1}, x_{s(2)}^{a_2},\cdots, x_{s(n)}^{a_n})
\]
for some permutation $s\in S_n$.
\end{ex}

The above example shows that, in general, the above proposition cannot be improved. However, if the space
$X(\fan)$ is nice, there are possibly stronger condition on the map $A$, as we can see in the following example.

\begin{ex}
For the space $\P^n$, we know the one dimensional cones are generated by $e_0,e_1\cdots,e_n$. Every $n$ of
them would form a basis for $N$, and $e_0+e_1+\cdots +e_n=0$. Applying $A^k$, we get
$a_0 e_0 + a_1 e_1+\cdots+ a_n e_n=0$. This will force $a_0=a_1=\cdots=a_n$, and hence we know
$f_A^k([x_0:\cdots:x_n])=[x_0^a:\cdots:x_n^a]$ for some positive integer $a$.
\end{ex}


\section{Divisors on toric varieties}

Divisors are the main tool in the study of codimension-one geometry of varieties.
Since we work on toric varieties, the divisors that are invariant under the torus action are especially important.
We will recall basic definitions and properties of divisors in a toric variety, then prove a formula about
pulling back divisors.

\subsection{Weil Divisors and divisor class groups}

In a toric variety, the torus invariant prime Weil divisors are exactly the codimension one orbit closures,
i.e., $V(\tau)$ for $\tau\in\fan(1)$. Let $\fan(1)=\{\tau_1,\cdots,\tau_d\}$ be the (finite) set of rays in $\fan$,
a $T$-invariant Weil divisor, $T$-Weil divisor for short,
is then of the form $\sum_{i=1}^d a_i V(\tau_i)$ where $a_i\in\Z$.
The group of $T$-Weil divisors, denoted $\WDivT(X(\fan))$, then equals to $\oplus_{i=1}^d \Z\cdot V(\tau_i)$,
the free abelian group generated by the $T$-invariant prime divisors. The principal divisors in
$\WDivT(X(\fan))$ are in 1-1 correspondence to elements of $M$ in the following way.
For each element $u\in M$, there is a corresponding character $(\C^*)^n\to\C^*$ which extend to a global meromorphic
function $\chi^u$ on $X(\fan)$. This gives the principal divisor
\[
\Div(\chi^u)=\sum_{i=1}^d \langle u,v_i \rangle V(\tau_i),
\]
where $v_i$ is the ray generator of $\tau_i$.
Conversely, every principal divisor in
$\WDivT(X(\fan))$ is of the form $\Div(\chi^u)$ for some $u\in M$. Hence we can identify $M$ as a subgroup
of $\WDivT(X(\fan))$, and the quotient
\[
A_{n-1}(X(\fan)) := \WDivT(X(\fan)) / M
\]
is the {\em divisor class group} of $X(\fan)$.

\subsection{Cartier divisors and Picard groups}

In a complete toric variety $X(\fan)$ associated to a complete fan $\fan$, the torus invariant Cartier divisors,
or for simplicity, $T$-Cartier divisors, is given by the following data. For each cone $\sigma\in\fan(n)$ of
maximal dimension, we specify an element $u(\sigma)\in M$. The datum
$\{u(\sigma) | \sigma\in\fan(n)\}$ are required to satisfy the
compatibility condition that $[u(\sigma)]=[u(\sigma')]$ in $M/M(\sigma\cap\sigma')$, where
$M(\sigma\cap\sigma')=(\sigma\cap\sigma')^\bot\cap M$. We write $D=\{u(\sigma)\}$ and call it the Cartier divisor
defined by the data $\{u(\sigma)\}$. We denote the group of all T-Cartier divisor by $\CDivT(X(\fan))$.

Each $u(\sigma)$ defines a $T$-Weil divisor $\Div(\chi^{-u(\sigma)})$ on $U_\sigma$ (the negative sign here is
to be consistent
with the literature). The compatibility condition means these divisors agree on overlaps, thus every $T$-Cartier
divisor $D=\{ u(\sigma) \}$ gives rise to a unique $T$-Weil divisor
\[
[D] = \sum_{\tau_i\in\fan(1)} -\langle u(\sigma), v_i \rangle \cdot V(\tau_i),
\]
here $\sigma$ in the summand is any maximal cone such that $\tau_i\in\sigma$. The sum is independent of the choice
of $\sigma$ by the compatibility condition.

The data $\{u(\sigma) | \sigma\in\fan(n)\}$ also defines a continuous piecewise linear function $\psi_D$ on $N_\R$.
The restriction of $\psi_D$ to the maximal cone $\sigma$ is given by $u(\sigma)$, i.e.,
$\psi_D(v)=\langle u(\sigma),v\rangle$ for $v\in\sigma$. The continuity comes from the compatibility.
Conversely, a continuous piecewise linear function $\psi$ on $N_\R$, which is also integral on each cone
(i.e., given by an element of the lattice $M$ on each cone),
determines a unique $T$-Cartier divisor $D$, with $[D]= \sum -\psi(v_i)\cdot V(\tau_i)$.
The function $\psi$ is called the {\em support function} of the Cartier divisor $D$.
On a complete toric variety, a $T$-Cartier divisor is {\em ample} if and only if its support function
is strictly convex(\cite[p.70]{Fu}).

There is a natural way to identify $M$ as a subgroup of $\CDivT(X(\fan))$. Each $u\in M$ is identified with
the Cartier divisor such that $u(\sigma)=u$ for all $\sigma\in\fan(n)$.
The Weil divisor of this Cartier divisor is exactly the principal divisor defined by $\chi^u$.
The quotient $\CDivT(X(\fan))/M$ is the {\em Picard group} of $X(\fan)$, and is denoted by $\Pic(X(\fan))$.

We conclude this section by mentioning relations between Picard groups and cohomology groups. For a
complete toric variety $X$, we have $\Pic(X)=H^2(X;\Z)$. If $X$ is also simplicial, then
\[
H^{1,1}(X):=H^1(X,\Omega_X)=H^2(X;\C)=\Pic(X)\otimes_\Z \C.
\]

\subsection{Pulling back divisors}

Assume that $\fan, \fan'$ are complete fans in $N_\R, N'_\R$, respectively, which associates to
the complete toric varieties $X(\fan)$ and $X(\fan')$.
Let $A$ be a homomorphism $N\to N'$ such that $A\otimes_\Z\R$ is surjective. It induces
a dominant rational map
$f_A: X(\fan)\dashrightarrow X(\fan')$. We would like to study the pull back of a Cartier divisor
on $X(\fan')$, which gives, in general, a Weil divisor of $X(\fan)$.

A Cartier divisor on $X(\fan')$ corresponds to a unique integral piecewise linear function on $\fan'$.
Let $D$ be a Cartier divisor, with support function $\psi_D$.

\begin{thm}\label{thm:pullback}
Let $\fan,\fan'$ be complete fans, and $f_A:X(\fan)\dashrightarrow X(\fan')$ be a dominant toric rational
map induced by $A:N\to N'$.
The pull back of $D$ via $f_A$ is
\[
f_A^*D= \sum_{\tau_i\in\fan(1)} -\psi_D (A v_i)\cdot V(\tau_i).
\]
Here the $\tau_i$ run through all one-dimensional cones of $\fan$, and $v_i$ is the ray generator
of the ray $\tau_i$.
\end{thm}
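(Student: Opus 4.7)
The plan is to reduce the computation to a local order-of-vanishing calculation. Since $X(\fan)$ is normal, specifying $f_A^* D$ as a Weil divisor is the same as specifying its coefficient along every $T$-invariant prime divisor $V(\tau_i)$. So the whole theorem reduces to showing
\[
\ord_{V(\tau_i)}(f_A^* D) = -\psi_D(A v_i)
\]
for every ray $\tau_i\in\fan(1)$ and then summing.

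The first step would be to verify that $f_A$ is actually defined at the generic point of $V(\tau_i)$, so that ``pullback'' has its naive meaning on the local ring. Since $\fan'$ is complete, $|\fan'|=N'_\R$, so $Av_i$ lies in some cone of $\fan'$, and hence in some maximal cone $\sigma'_i\in\fan'(n)$. Then
\[
A(\tau_i)=\R_{\ge 0}\cdot A v_i \subseteq \sigma'_i,
\]
so $\tau_i$ maps regularly into $\fan'$ and $f_A$ restricts to a genuine toric morphism $U_{\tau_i}\to U_{\sigma'_i}$. In particular, the generic point of $V(\tau_i)$ lies in the domain of $f_A$.

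Next I would pull back a local equation. On $U_{\sigma'_i}$ the divisor $D$ is cut out by $\chi^{-u(\sigma'_i)}$. Because $f_A$ is equivariant for the torus action, pullback of characters is given by the dual of $A$: for every $m\in M'$,
\[
f_A^* \chi^m = \chi^{{}^t\! A(m)}.
\]
The DVR $\mathcal{O}_{X(\fan),V(\tau_i)}$ carries the valuation $\chi^m \mapsto \langle m,v_i\rangle$, so
\[
\ord_{V(\tau_i)}\bigl(f_A^* D\bigr)
= \langle -{}^t\!A(u(\sigma'_i)),\, v_i\rangle
= -\langle u(\sigma'_i),\, A v_i\rangle
= -\psi_D(A v_i),
\]
using that $A v_i\in \sigma'_i$ and $\psi_D|_{\sigma'_i}=\langle u(\sigma'_i),\cdot\rangle$. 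Summing over $\tau_i\in\fan(1)$ gives the formula.

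The one subtlety I would want to treat carefully is that the right-hand side is intrinsic, i.e.\ independent of the choice of maximal cone $\sigma'_i$ containing $A v_i$: if $A v_i$ lies on the common face $\sigma'\cap\sigma''$ of two maximal cones, then the compatibility $u(\sigma')\equiv u(\sigma'')\pmod{M(\sigma'\cap\sigma'')}$ built into the definition of a $T$-Cartier divisor forces $\langle u(\sigma'), A v_i\rangle=\langle u(\sigma''), A v_i\rangle$. This is just the continuity of $\psi_D$ evaluated at $A v_i$. Beyond this bookkeeping, the only non-formal input is completeness of $\fan'$ in the first step; I do not anticipate a serious obstacle.
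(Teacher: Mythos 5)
Your proof is correct, and it takes a genuinely different route from the paper's. The paper defines $f_A^* D$ by choosing a refinement $\widetilde{\fan}$ of $\fan$ on which $A$ becomes a morphism $\widetilde f_A$, pulls back $D$ as a Cartier divisor there via its support function, and then pushes forward along the birational morphism $\pi\colon X(\widetilde{\fan})\to X(\fan)$ (which drops all exceptional rays). You bypass the resolution entirely and compute the coefficient of $f_A^* D$ along each $V(\tau_i)$ directly, using that completeness of $\fan'$ guarantees every ray of $\fan$ maps regularly — so $f_A$ is already a morphism near the generic point of $V(\tau_i)$ — and then reading off the order of vanishing from the local equation $\chi^{-u(\sigma'_i)}$ via the valuation $\chi^m\mapsto\langle m,v_i\rangle$. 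Your route is more local and more elementary (no auxiliary toric variety, no push-forward); the paper's has the advantage that the pullback is visibly a piecewise linear construction (take $\psi_D\circ A$, then restrict to the rays of $\fan$), which sets up the ``interpolation'' machinery $\mu_{\widetilde{\fan},\fan}$ used later in the simplicial case. Two small points worth making explicit if you were to write this up. First, you should state which definition of $f_A^*$ for rational maps you are using: you implicitly take the standard one for normal source (restrict to the domain of definition, pull back the Cartier divisor there, extend as a Weil divisor), whereas the paper \emph{defines} $f_A^* D:=\pi_*(\widetilde f_A^* D)$; these agree because $\pi$ is an isomorphism outside codimension $\ge 2$, but the agreement deserves a sentence. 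Second, your opening reduction — that it suffices to compute coefficients along the $T$-invariant prime divisors $V(\tau_i)$ — tacitly uses that $f_A^* D$ is $T$-invariant (which follows from equivariance of $f_A$ and invariance of $D$, given dominance); this is immediate in the paper's approach because everything is expressed in terms of support functions from the start, but in yours it should be said.
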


\begin{proof}
We can refine the fan $\fan$ to get a fan $\widetilde{\fan}$ such that $A$ induces a toric morphism
from $X(\widetilde{\fan})$ to $X(\fan)$. In order to distinguish from $f_A$,  we call this morphism
$\widetilde{f}_A$. The morphism $\pi: X(\widetilde{\fan}) \to X(\fan)$ is induced by the identity map
on $N$. It is proper and birational. So we have the following diagram.

\[
\xymatrix{
& X(\widetilde{\fan})\ar[ld]_{\pi}\ar[rd]^{\widetilde{f}_A}\\
X(\fan)\ar@{-->}[rr]_{\widetilde{f}_A\circ\pi^{-1}} & & X(\fan')
}
\]

We are going to pull back the divisor $D$ by first pull it back by $\widetilde{f}_A$, then push it forward
by $\pi$, i.e., $f_A^* D = \pi_* ( \widetilde{f}_A^* D ) $. Notice that once we show $f_A^*$ is given by the
above expression, then since the expression is independent of the refined fan $\widetilde{\fan}$,
so is $f_A^*$.

A Cartier divisor $D=\{u(\sigma); \sigma\in\fan(n)\}$ is locally cut out by the equation $\chi^{u(\sigma)}=0$ on $U_\sigma$.
The map $A:N\to N'$ induces the map $M'\to M$ defined by $u\mapsto u\circ A$. Therefore, the pull back
of $D$ under the morphism $\widetilde{f}_A$ is given by $\widetilde{f}_A^*D=\{u(\sigma)\circ A\}$.
We can also describe the pull back $\widetilde{f}_A^*D$ using
its support function. If $\psi_D$ is the support function of $D$, then $\widetilde{f}_A^* D$ will
have $\psi_D\circ A$ as its support function. Thus, as a Weil divisor, we have
\[
\widetilde{f}_A^* D = \sum_{\tau_i\in\widetilde{\fan}(1)} -\psi_D (A v_i)\cdot V(\tau_i).
\]

The fan $\widetilde{\fan}$ is a subdivision of $\fan$, and $\pi$ is just the toric morphism induced by identity.
The push forward map $\pi_*$ is given, on the prime divisors $V(\tau)$ for $\tau\in \widetilde{\fan}(1)$, by
\[
\pi_* V(\tau)=\begin{cases} V(\tau)&   \text{if $\tau\in\fan(1)$,} \\
                                0  &   \text{if $\tau\not\in\fan(1)$.}  \end{cases}
\]

Therefore, combining the two steps, we obtain
\[
f_A^* D = \pi_* (\widetilde{f}_A^* D) = \sum_{\tau_i\in\fan(1)} -\psi_D (A v_i)\cdot V(\tau_i).
\]
\end{proof}

Notice that, if $D$ is a principal divisor, i.e., $\psi_D$ is a linear function, then the pull back $f_A^* D$
will again be principal, given by the linear function $\psi_D\circ A$. Thus it induces a map, also denoted
by $f_A^*$, from the Picard group to the divisor class group.
\[
f_A^*: \Pic(X(\fan'))\to A_{n-1}(X(\fan)).
\]

If the fan is smooth, then so is the toric variety, and the notions of Weil
divisors and Cartier divisors coincide. So the pull back map of a Cartier divisor is still Cartier, and
it induces a map on Picard groups.
\[
f_A^*: \Pic(X(\fan'))\to \Pic(X(\fan)).
\]

 In a simplicial toric variety, every Weil divisor $D$ is
$\Q$-Cartier, i.e., some positive integral multiple of $D$ is Cartier.
Thus if we denote $G_\Q=G\otimes_\Z \Q$ for an abelian group $G$, and assume that $\fan,\fan'$ are simplicial,
then we have both maps
\begin{align*}
f_A^*&: \CDivT(X(\fan'))_\Q\to \CDivT(X(\fan))_\Q,\\
f_A^* &: \Pic(X(\fan'))_\Q\to \Pic(X(\fan))_\Q.
\end{align*}
We use the same symbol $f_A^*$ here to avoid inventing too many notations,
but it has the drawback of making confusions.
Thus we will state clearly whether we talk about divisors or divisor classes every time we use the symbol $f_A^*$.

What we do for pulling back $\Q$-Cartier divisors in the simplicial case is as follows. First,
notice that an element $D\in\CDivT(X(\fan'))_\Q$ can be identified with a {\em rational} support function
$\psi_D$, i.e., it takes rational values on the ray generators.
The composition $(\psi_D\circ A)$ is piecewise linear on the fan $\widetilde{\fan}$, but not on $\fan$.
We use the values of the function on rays to make an interpolation and obtain a piecewise linear function on $\fan$.
This step is possible because $\fan$ is simplicial.
If we denote the modifying (interpolation) function by $\mu=\mu_{\widetilde{\fan},\fan}$, we can
describe it more concretely.
Let $\varphi$ be a rational continuous piecewise linear function on $\widetilde{\fan}$,
for a maximal cone $\sigma\in\fan(n)$,
assume that $\tau_1\cdots\tau_n$ are one-dimensional faces of $\sigma$, with ray generators $v_1,\cdots,v_n$,
then $\mu(\varphi)|_{\sigma}=\sum_{i=1}^n \varphi(v_i)\cdot v_i^*$. Here $v_i^*$ is the dual basis of $v_i$
with respect to the basis $\{v_1,\cdots,v_n\}$.

To sum up, we have the following:

\begin{cor*}
For complete, simplicial toric varieties $X(\fan), X(\fan')$, and a dominant toric rational map
$f_A: X(\fan)\dashrightarrow X(\fan')$,
we can write the procedure of pulling back divisors as $f_A^* D = \mu_{\widetilde{\fan},\fan}(\psi_D\circ A)$.
\hfill\qed
\end{cor*}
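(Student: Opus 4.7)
The plan is to identify the $\Q$-Cartier divisor $f_A^*D$ through its rational support function, and match this function against the interpolation $\mu_{\widetilde{\fan},\fan}(\psi_D\circ A)$. On the simplicial variety $X(\fan)$, every Weil divisor is $\Q$-Cartier, and under the correspondence between a $\Q$-Cartier divisor with support function $\psi$ and its associated Weil divisor $\sum -\psi(v_i)V(\tau_i)$, the support function is uniquely determined by its values on the ray generators together with linearity on each maximal cone. Thus it suffices to check two things: that $\mu_{\widetilde{\fan},\fan}(\psi_D\circ A)$ really defines a continuous rational piecewise linear function on $\fan$; and that its values at the ray generators match the coefficients of $f_A^*D$ given by Theorem~\ref{thm:pullback}.

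For the first point, fix a maximal cone $\sigma\in\fan(n)$ with ray generators $v_1,\ldots,v_n$. Since $\fan$ is simplicial these vectors are linearly independent, so they form a $\Q$-basis of the $\Q$-vector space spanned by $\sigma$, and the formula $\mu(\varphi)|_\sigma=\sum_{i=1}^n\varphi(v_i)\cdot v_i^*$ unambiguously defines a linear function on $\sigma$; rationality is automatic because $\psi_D\circ A$ takes rational values on lattice points. For continuity, let $\rho=\sigma\cap\sigma'$ be a common face of two adjacent maximal cones. Then $\rho$ is itself simplicial, its ray generators are contained among those of both $\sigma$ and $\sigma'$, and both $\mu(\psi_D\circ A)|_\sigma$ and $\mu(\psi_D\circ A)|_{\sigma'}$ restrict to the unique linear function on the span of $\rho$ with the prescribed values $\psi_D(Av_i)$ at the ray generators of $\rho$. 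Hence the two restrictions agree, and the local formulas patch together into a continuous piecewise linear function on $|\fan|=N_\R$.

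For the second point, by construction $\mu_{\widetilde{\fan},\fan}(\psi_D\circ A)(v_i)=\psi_D(Av_i)$ for every ray generator $v_i$ of $\fan$, so the Weil divisor associated to this $\Q$-Cartier divisor is $\sum_{\tau_i\in\fan(1)}-\psi_D(Av_i)\cdot V(\tau_i)$, which coincides with the formula for $f_A^*D$ from Theorem~\ref{thm:pullback}. The main technical point to be careful about is the well-definedness and continuity in the first step, both of which rely essentially on the simpliciality of $\fan$: without this hypothesis the ray generators of a maximal cone need neither span it freely nor be enough to interpolate a linear function compatibly across common faces, and one would be forced to work directly on the refinement $\widetilde{\fan}$.
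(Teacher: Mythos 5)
Your proof is correct and follows the same approach as the paper's implicit argument, which treats the corollary as an immediate consequence of the preceding discussion on the interpolation operator $\mu_{\widetilde{\fan},\fan}$. You simply make explicit the two facts the paper leaves to the reader: that simpliciality makes $\mu(\psi_D\circ A)$ a well-defined, continuous, $\Q$-valued piecewise linear function on $\fan$ (using that faces of a simplicial cone are spanned by subsets of its generators), and that its values on ray generators reproduce the coefficients from Theorem~\ref{thm:pullback}, hence the associated $\Q$-Weil divisor agrees with $f_A^*D$.
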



\section{Algebraic stability}
\label{sec:alg_stab}

For the rest of this paper, all toric varieties are assumed to be complete and simplicial.

\subsection{Definition and a geometric criterion}

\begin{defn}
A toric rational map $f_A:X(\fan)\dashrightarrow X(\fan)$ is {\em strongly algebraically stable} if
$(f_A^k)^*=(f_A^*)^k$ as maps of $\CDivT(X(\fan))_\Q$ for all $k\in\N$. It is {\em algebraically stable} if
$(f_A^k)^*=(f_A^*)^k$ as maps of $\Pic(X(\fan))_\Q$, for all $k$.
\end{defn}

Notice that $(f_A^k)^*=(f_A^*)^k$ on $\CDivT(X(\fan))_\Q$ implies $(f_A^k)^*=(f_A^*)^k$ on $\Pic(X(\fan))_\Q$,
so the condition for
strongly algebraic stability is indeed stronger than that for algebraic stability.
It is not clear to us whether the two conditions are equivalent or not
in general. However, if we assume that the toric variety $X=X(\fan)$ is projective, then
the two conditions are equivalent. We will prove that later in this section.

Our next goal is to prove a geometric characterization of strongly algebraically stable maps. We need to prove
a lemma first. Given two homomorphisms of lattices $A:N\to N'$ and $B: N''\to N$, they induce
two toric rational maps $f_A:X(\fan)\to f(\fan')$ and $f_B:X(\fan'')\to f(\fan)$.

\begin{lem}
$(f_A\circ f_B)^* = f_B^* \circ f_A^*$ as maps
\[
\CDivT(X(\fan'))_\Q\ \to \CDivT(X(\fan''))_\Q
\]
if and only if for each ray in $\fan''$, the cone closure of its image maps regularly to $\fan'$. That is, for each
$\tau\in\fan''(1)$, there exists a $\sigma'\in\fan'$ such that $A(\overline{B(\tau)})\subset \sigma'$.
\end{lem}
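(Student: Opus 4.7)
The plan is to unwind both sides of the identity $(f_A\circ f_B)^* D = f_B^* f_A^* D$ ray by ray on $\fan''$, using the interpolation formula from the corollary preceding the lemma, and to check exactly when the resulting values agree.

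Fix $D \in \CDivT(X(\fan'))_\Q$ with rational piecewise linear support function $\psi_D$. The composite $f_A \circ f_B$ is the toric rational map induced by $AB: N'' \to N'$, so by Theorem~\ref{thm:pullback} and the corollary, the $\Q$-Cartier divisor $(f_A \circ f_B)^* D$ has support function on $\fan''$ whose value on each ray generator $v_i$ of $\tau_i\in\fan''(1)$ is $\psi_D(ABv_i)$. On the other hand, $f_A^* D$ is the $\Q$-Cartier divisor with support function $\mu(\psi_D\circ A)$ on $\fan$, and then $f_B^*(f_A^* D)$ has support function on $\fan''$ whose value on $v_i$ is $\mu(\psi_D\circ A)(Bv_i)$. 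Since $\fan''$ is simplicial, a $\Q$-Cartier divisor is determined by the values of its support function on ray generators, so the desired equality is equivalent to
\[
\psi_D(ABv_i) \;=\; \mu(\psi_D\circ A)(Bv_i) \qquad \text{for every } \tau_i\in\fan''(1).
\]

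Next I would unpack the right-hand side. Let $v_1,\ldots,v_d$ be the ray generators of $\overline{B(\tau_i)}$; since $\fan$ is simplicial they are linearly independent, and as $Bv_i$ lies in the relative interior of $\overline{B(\tau_i)}$ we have a unique expression $Bv_i = \sum_{k=1}^d \lambda_k v_k$ with $\lambda_k>0$. The description of $\mu$ then gives $\mu(\psi_D\circ A)(Bv_i) = \sum_k \lambda_k \psi_D(Av_k)$, while $ABv_i = \sum_k \lambda_k Av_k$, so the criterion reduces to
\[
\psi_D\Bigl(\textstyle\sum_k \lambda_k Av_k\Bigr) \;=\; \sum_k \lambda_k \psi_D(Av_k).
\]
Sufficiency is then immediate: if $A(\overline{B(\tau_i)})\subset\sigma'$ for some $\sigma'\in\fan'$, each $Av_k$ and hence the positive combination $ABv_i$ lies in $\sigma'$, and $\psi_D$ is linear on $\sigma'$, so both sides agree for every $\psi_D$.

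For necessity I would argue the contrapositive: suppose no cone of $\fan'$ contains all of $Av_1,\ldots,Av_d$. Then the convex cone generated by these points crosses at least one wall of $\fan'$. Since $\fan'$ is simplicial I may specify a rational piecewise linear function freely on ray generators, and I would build $\psi_D$ whose defining linear forms on the two maximal cones on either side of that wall disagree, producing a nonzero kink that violates the displayed equation. A clean shortcut is available when $X(\fan')$ is projective: take $\psi_D$ strictly convex, coming from any ample divisor, and strict convexity of $\psi_D$ applied to the $Av_k$'s directly yields strict inequality. The main obstacle is making the necessity argument go through in the non-projective case, where a globally strictly convex support function need not exist; one must instead identify a single wall of $\fan'$ transverse to the cone generated by the $Av_k$'s and tune $\psi_D$ to have a nontrivial slope-jump across that wall without introducing cancellations from other walls, which is the delicate step.
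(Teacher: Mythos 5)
Your sufficiency argument matches the paper exactly: both sides reduce to comparing $\psi_D(ABv_i)$ with $\mu(\psi_D\circ A)(Bv_i)$, and linearity of $\psi_D\circ A$ on $\overline{B(\tau_i)}$ makes the interpolation $\mu$ a no-op. That half is fine.

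The necessity direction has a genuine gap, and you correctly diagnose where it is. Your "clean shortcut" via a strictly convex support function requires $X(\fan')$ projective, which the lemma does not assume — and your proposed workaround (pick a wall transverse to the cone on the $Av_k$'s and "tune $\psi_D$ to have a nontrivial slope-jump ... without introducing cancellations") is left at the level of a plan. It is precisely the cancellation issue that must be confronted: a generic wall-crossing function could, in principle, produce positive jumps on some $Av_k$'s and negative ones on others, and the positive combination $\sum_k\lambda_k\psi_D(Av_k)$ could still equal $\psi_D(\sum_k\lambda_k Av_k)$. The paper sidesteps this cleanly with an explicit, one-sided construction. Set $\rho=\overline{(A\circ B)(\tau)}$ and take $D=\sum_i a_i V(\tau'_i)$ with $a_i=0$ if $\tau'_i$ is a ray of $\rho$ and $a_i=1$ otherwise. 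Because $\fan'$ is simplicial, the resulting support function $\psi_D$ is nonpositive everywhere, and $\psi_D(v)=0$ if and only if $v\in\rho$ (on each maximal cone $\sigma'$, $\psi_D$ vanishes exactly on the face of $\sigma'$ spanned by the common rays of $\sigma'$ and $\rho$, which is $\sigma'\cap\rho$). Since $ABv\in\rho$, the coefficient of $V(\tau)$ in $(f_A\circ f_B)^*D$ is zero; but failure of regularity produces at least one ray generator $v_0$ of $\overline{B(\tau)}$ with $Av_0\notin\rho$, hence $\psi_D(Av_0)<0$ strictly, and since all the other summands $\psi_D(Av_k)\le 0$ have the same sign, $\mu(\psi_D\circ A)(Bv)=\sum_k\lambda_k\psi_D(Av_k)<0$ with no possibility of cancellation. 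The one-signedness of $\psi_D$, forced by the choice of $a_i\in\{0,1\}$, is exactly the ingredient your sketch was missing; without it the "delicate step" you flag does not close.
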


\begin{figure}
\centerline{  \includegraphics{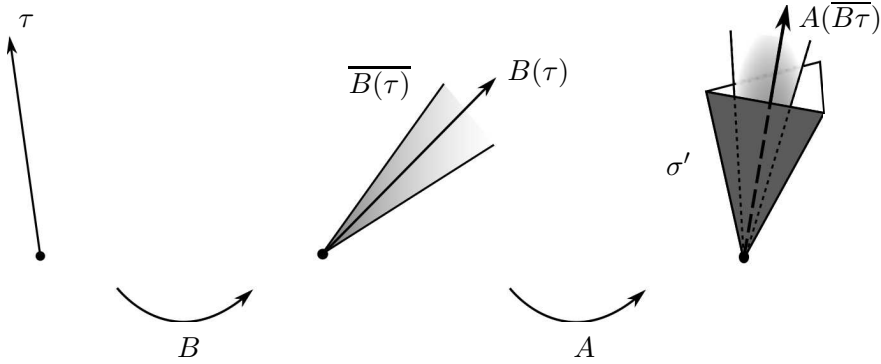} }
\begin{picture}(0,0)(0,0)
\put(35,105){\makebox{$B(\tau)$} }
\put(-25,100){\makebox{$\overline{B(\tau)}$} }
\put(145,125){\makebox{$A(\overline{B\tau})$} }
\put(-150,125){\makebox{$\tau$} }
\put(-90,0){\makebox{$B$} }
\put(60,0){\makebox{$A$} }
\put(95,69){\makebox{$\sigma'$} }
\end{picture}
\caption{The geometric condition for $(f_A\circ f_B)^* = f_B^* \circ f_A^*$.}
\end{figure}

\begin{proof}
First, suppose that the geometric condition is satisfied, we want to show $(f_A\circ f_B)^* = f_B^* \circ f_A^*$.
Remember that $(f_A\circ f_B)^* D = \mu (\psi_D\circ (A\circ B))$ and
$(f_B^* \circ f_A^*) D = f_B^*(f_A^* D) = \mu ( \mu(\psi_D\circ A)\circ B)$, where $\mu$
is the modifying function. So it is enough to show that,
for all $\tau_i\in\fan(1)$ and $v_i$ the ray generator of $\tau_i$,
\[
(\psi_D\circ (A\circ B))(v_i) =  (\mu(\psi_D\circ A)\circ B)(v_i),
\]
that is, $\psi_D(A( B v_i) ) =  \mu(\psi_D\circ A)( B v_i)$.

Since $A(\overline{B(\tau_i)})\subset \sigma'$ for some $\sigma'\in\fan'$ and $\psi_D$ is linear on $\sigma'$,
hence $(\psi_D\circ A)$ is linear on $\overline{B(\tau_i)}$. The interpolation $\mu$ therefore does not do
anything on $\overline{B(\tau_i)}$, and we have $\mu(\psi_D\circ A)( B v_i)= (\psi_D\circ A)( B v_i)$.

Conversely, if for some ray $\tau\in\fan(1)$, $\overline{B(\tau)}$ does not map regularly by $A$.
This means that $A(\overline{B(\tau)})$ is not contained in any cone of $\fan'$.
We will construct a divisor $D\in\CDivT(X(\fan'))$ such that $(f_A\circ f_B)^* D\ne (f_B^* \circ f_A^*)D$. Let
$\gamma_1,\cdots,\gamma_m$ be the one-dimensional faces of $\fan'$, and for $i=1,\cdots,m$, let
\[
a_i=\begin{cases} 0 &   \text{if $\gamma_i$ is a face of $\overline{(A\circ B)(\tau)}$,} \\
                  1 &   \text{otherwise.}  \end{cases}
\]
Define $D=\sum_{i=1}^m a_i\cdot V(\tau'_i)$, and let $\psi_D$ be the support function of $D$.
First, observe that $\psi_D(v)=0$ if and only if $w\in \overline{(A\circ B)(\tau)}$.
Thus for the divisor $(f_A\circ f_B)^* D$, the coefficient of $V(\tau)$ is $0$.

On the other hand, since $A(\overline{B(\tau)})$ is not contained in $\overline{(A\circ B)(\tau)}$,
there is some one dimensional face $\tau_0$ of $\overline{B(\tau)}$ such that
$A(\tau_0)\not\in \overline{(A\circ B)(\tau)}$.
Let $v_0$ be the ray generator of $\tau_0$, then $\psi_D(Av_0)>0$. Thus $\mu(\psi_D\circ A)$ is
strictly positive in the relative interior of $\overline{B(\tau)}$, which contains $Bv$. This implies
that the coefficient of $V(\tau)$ for the divisor $(f_B^* \circ f_A^*)D$ is strictly positive.
Therefore we have $(f_A\circ f_B)^* D\ne (f_B^* \circ f_A^*)D$.
\end{proof}

\begin{thm}\label{thm:sas}
A toric rational map $f_A:X(\fan)\dashrightarrow X(\fan)$ is strongly algebraically stable if and only if
for all ray $\tau\in\fan(1)$ and for all $n\in\N$, $\overline{A^n(\tau)}$ maps regularly to $\fan$ by $A$.
\end{thm}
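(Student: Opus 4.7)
The plan is to obtain the theorem as a direct consequence of the preceding lemma, applied iteratively. Writing $f_A^{n+1} = f_A \circ f_A^n$ and taking the ambient matrix $A$ in the role of the lemma's ``$A$'' and the iterate $A^n$ in the role of ``$B$'', the lemma's geometric condition specializes to: for each $\tau \in \fan(1)$, there is a $\sigma' \in \fan$ with $A(\overline{A^n(\tau)}) \subset \sigma'$, i.e., $\overline{A^n(\tau)}$ maps regularly to $\fan$ by $A$. Note that $\overline{A^n(\tau)}$ is always defined: since $A^n(\tau)$ is either $\{0\}$ or a single ray, and in a complete fan any such ray is contained in some cone, the cone closure makes sense for every $n$.

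For the backward direction, I would assume the geometric hypothesis and prove $(f_A^k)^* = (f_A^*)^k$ on $\CDivT(X(\fan))_\Q$ by induction on $k$. The case $k=1$ is trivial. For the inductive step, the lemma applied to $f_A^{k+1} = f_A \circ f_A^k$ yields $(f_A^{k+1})^* = (f_A^k)^* \circ f_A^*$, where the relevant hypothesis of the lemma is exactly the $n=k$ case of our geometric assumption. The inductive hypothesis $(f_A^k)^* = (f_A^*)^k$ then produces $(f_A^{k+1})^* = (f_A^*)^{k+1}$, closing the induction.

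For the forward direction, suppose $f_A$ is strongly algebraically stable. Applying the stability equality at both levels $n$ and $n+1$ and using associativity of composition yields
\[
(f_A \circ f_A^n)^* \;=\; (f_A^{n+1})^* \;=\; (f_A^*)^{n+1} \;=\; (f_A^*)^n \circ f_A^* \;=\; (f_A^n)^* \circ f_A^*,
\]
which is precisely the identity that the ``only if'' half of the lemma is designed to detect. Invoking that half of the lemma forces $A(\overline{A^n(\tau)}) \subset \sigma'$ for some $\sigma' \in \fan$ and every $\tau \in \fan(1)$, establishing the geometric condition for all $n$. The main conceptual point is just to keep track of the direction of composition when feeding the iterates into the lemma; once that bookkeeping is done, no further ingredients are needed.
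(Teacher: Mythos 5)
Your proposal is correct and follows essentially the same route as the paper: both directions reduce to the preceding lemma applied with $B=A^n$, the forward direction extracting $(f_A\circ f_A^n)^*=(f_A^n)^*\circ f_A^*$ from strong stability at levels $n$ and $n+1$, and the backward direction running an induction on $k$ seeded by the lemma's ``if'' half. The extra remark that $\overline{A^n(\tau)}$ is always defined (completeness of $\fan$) is a correct and sensible clarification the paper leaves implicit.
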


\begin{proof}
First assume that $f=f_A$ is strongly algebraically stable. Thus for all $n\in\N$, we have $(f^n)^*=(f^*)^n$ and
$(f^{n+1})^*=(f^*)^{n+1}$. This gives us
\[
(f\circ f^n)^*=(f^{n+1})^*=(f^*)^{n+1}=(f^*)^n \circ f^* = (f^n)^*\circ f^*.
\]
By the above lemma, the equality $(f\circ f^n)^*=(f^n)^*\circ f^*$ implies that $\overline{A^n(\tau)}$
is mapped regularly to $\fan$ by $f$.

Conversely, assume that $\overline{A^n(\tau)}$ is mapped regularly to $\fan$ by $f$ for all $n\in\N$. This
tells us that $(f\circ f^n)^* = (f^n)^*\circ f^*$ for all $n\in N$. Thus we have $(f^n)^*=(f^*)^n$ for all $n\in\N$
by an induction argument.
\end{proof}

In fact, let $\sigma_n=\overline{A^n(\tau)}$, the next lemma implies that, not only $\sigma_n$ maps to
$\fan$ regularly, also the cone closure $\overline{A(\sigma_n)}$ is equal to
$\sigma_{n+1}=\overline{A^n(\tau)}$.

\begin{lem}\label{lem:coneclosure}
Assume further that $A_\R$ is surjective, and $\overline{B(\tau)}$ maps regularly to $\fan'$, then for all
$\tau\in\fan''$, we have $\overline{A(\overline{B(\tau)})}=\overline{(A\circ B)(\tau)}$.
\end{lem}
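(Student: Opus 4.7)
The plan is to prove the equality by establishing both inclusions; one direction is nearly immediate from the definitions, while the other requires a relative-interior argument that exploits the minimality built into the notation $\overline{(\cdot)}$.

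First I will dispatch the easy inclusion $\overline{(A\circ B)(\tau)}\subseteq \overline{A(\overline{B(\tau)})}$. Since $B(\tau)\subseteq \overline{B(\tau)}$, applying $A$ gives $(A\circ B)(\tau)\subseteq A(\overline{B(\tau)})\subseteq \overline{A(\overline{B(\tau)})}$, and the rightmost set is already a cone of $\fan'$. By the minimality of $\overline{(A\circ B)(\tau)}$ as the smallest cone of $\fan'$ containing $(A\circ B)(\tau)$, the inclusion follows.

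For the converse $\overline{A(\overline{B(\tau)})}\subseteq \overline{(A\circ B)(\tau)}$, the strategy is to exhibit a single point $w$ that lies simultaneously in the relative interior of $\overline{A(\overline{B(\tau)})}$ and in $\overline{(A\circ B)(\tau)}$, then close the argument with the fan axioms. Setting $\sigma:=\overline{B(\tau)}$, the defining minimality of the cone closure means $B(\tau)$ cannot be contained in any proper face of $\sigma$, so there exists $v\in\tau$ with $B(v)\in\mathrm{relint}(\sigma)$. Since $A$ is linear and $\sigma$ is a convex cone, $A$ maps $\mathrm{relint}(\sigma)$ onto $\mathrm{relint}(A(\sigma))$, giving $A(B(v))\in\mathrm{relint}(A(\sigma))$. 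The key sub-observation is that $\mathrm{relint}(A(\sigma))\subseteq\mathrm{relint}(\overline{A(\sigma)})$: if a relative-interior point of $A(\sigma)$ lay in a proper face $F$ of $\overline{A(\sigma)}$, then the support functional cutting out $F$ would be nonnegative on $A(\sigma)$ and vanish at a relative-interior point of $A(\sigma)$, forcing it to vanish on all of $A(\sigma)$; then $F$ (itself a cone of $\fan'$, being a face of a cone of $\fan'$) would be a strictly smaller cone containing $A(\sigma)$, contradicting the minimality of $\overline{A(\sigma)}$.

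With the witness $w=A(B(v))\in\mathrm{relint}(\overline{A(\overline{B(\tau)})})\cap\overline{(A\circ B)(\tau)}$ in hand, the two cones in question both belong to $\fan'$, hence their intersection is a common face of each. A face of $\overline{A(\overline{B(\tau)})}$ that meets its relative interior must be the whole cone, yielding $\overline{A(\overline{B(\tau)})}\subseteq\overline{(A\circ B)(\tau)}$. The main obstacle is the sub-observation about relative interiors in the previous step; once that is available, the fan axioms and the standard relative-interior/face dichotomy finish the proof. The surjectivity of $A_\R$ is not used in the geometry of the argument itself; it is the standing assumption that makes the pull-back $f_A^*$ sensible and guarantees that the cone closures in the statement are well defined throughout.
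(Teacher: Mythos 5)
Your proof is correct, and it takes a genuinely different route from the paper's. The paper's argument is dual in flavor: it begins with the easy observation that $\overline{(A\circ B)(\tau)}$ is a face of $\overline{A(\overline{B(\tau)})}$, picks a supporting hyperplane $H'$ in $N'_\R$ cutting out that face, pulls it back to $H=A^{-1}(H')$ (this is where $A_\R$ surjective is invoked, to make $H$ a hyperplane), observes that $\overline{B(\tau)}\cap H$ is a face of $\overline{B(\tau)}$ containing $B(\tau)$, and then squeezes twice by minimality to conclude $\overline{A(\overline{B(\tau)})}\subset H'$, hence equality. Your argument is primal: you produce a single witness point $A(B(v))$ lying in $\mathrm{relint}(\overline{A(\overline{B(\tau)})})$ and in $\overline{(A\circ B)(\tau)}$, then invoke the fan axiom that the intersection of two fan cones is a common face, together with the fact that a face meeting the relative interior is the whole cone. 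Both proofs hinge on the same minimality characterization of the cone closure (``$B(\tau)$ is in no proper face of $\overline{B(\tau)}$''), but the mechanism is complementary: hyperplanes versus interior points. A pleasant by-product of your route is that it never uses surjectivity of $A_\R$ at all, so it establishes the lemma under weaker hypotheses than stated; your closing remark to this effect is right, although the claim that surjectivity ``guarantees that the cone closures are well defined'' is not quite accurate (the cone closure $\overline{A(\overline{B(\tau)})}$ exists precisely because $\overline{B(\tau)}$ maps regularly, which is a separate hypothesis). The one step worth writing out in full if this were to appear in print is the assertion that a convex set in a polyhedral cone not contained in any proper face must meet the relative interior; you use it implicitly to extract $v\in\tau$ with $B(v)\in\mathrm{relint}(\overline{B(\tau)})$, and it is true, but it deserves a sentence.
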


That is, if $\sigma$ is the smallest cone
in $\fan$ that contains $B(\tau)$, and $\sigma'$ is the smallest cone
in $\fan'$ that contains $A(\sigma)$, then $\sigma'$ will be the smallest cone in $\fan'$ that contains
$A(B(\tau))$.

\begin{proof}
Obviously, $\overline{(A\circ B)(\tau)}$ is a face of $\overline{A(\overline{B(\tau)})}$.
Thus there is a supporting hyperplane $H'$ of
$\overline{A(\overline{B(\tau)})}$ in $N'_\R$ such that
\[
\overline{A(\overline{B(\tau)})}\cap H'=\overline{(A\circ B)(\tau)}.
\]
The preimage $H=A^{-1}(H')$ will then be a
supporting hyperplane of $\overline{B(\tau)}$ in $N_\R$, so $\overline{B(\tau)}\cap H$ is a
face of $\overline{B(\tau)}$ that contains $B(\tau)$. By the minimality of $\overline{B(\tau)}$, we must have
$\overline{B(\tau)}\cap H= \overline{B(\tau)}$, i.e., $\overline{B(\tau)}\subset H$.
Thus, $A(\overline{B(\tau)})\subset H'$, and by the minimality of $\overline{A(\overline{B(\tau)})}$,
we know $\overline{A(\overline{B(\tau)})}\subset H'$. Therefore,
\[
\overline{A(\overline{B(\tau)})} = \overline{A(\overline{B(\tau)})}\cap H'=\overline{(A\circ B)(\tau)}.
\]
\end{proof}

With Theorem~\ref{thm:sas} and Lemma~\ref{lem:coneclosure}, we can describe the behavior, under iterations,
of an strongly algebraically stable toric rational map $f_A$ very concretely, as follows.
For each ray $\tau\in\fan(1)$, let $\sigma_1=\overline{A(\tau)}$ be the smallest cone containing $A(\tau)$,
then $\sigma_1$ will map regularly to some cone in $N$, Assume
$\sigma_2 = \overline{A(\sigma_1)} = \overline{A^2(\tau)}$ is the smallest such cone.
Here the second equality is due to the lemma. Then $\sigma_2$ will map regularly again to some
smallest $\sigma_3=\overline{A(\sigma_2)} = \overline{A^2(\sigma_1)}= \overline{A^3(\tau)}$, and so on.

\subsection{Algebraic stable vs. strongly algebraic stable}
Now we can prove the equivalence of algebraic stable and strongly algebraic stable in the projective case.
The equivalence of the two conditions, and a proof in the general case
is mentioned to us by C. Favre. We adapted his proof to a proof for toric varieties.

Given two integer matrices $A,B\in\M_n(\Z)$ with nonzero determinants, which
induce two dominant toric rational maps $f_A, f_B : X \dashrightarrow X$.

\begin{lem}
Let $D$ be an ample, $T$-invariant divisor on $X$, then the
difference $(f_B^*\circ f_A^*) D - (f_A \circ f_B)^* D$ is an effective $\Q$-Cartier divisor.
\end{lem}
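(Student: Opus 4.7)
The plan is to compute both divisors explicitly via their support functions and compare them ray by ray. By Theorem~\ref{thm:pullback} applied to the composition $f_A \circ f_B = f_{AB}$,
\[
(f_A \circ f_B)^* D \;=\; \sum_{\tau \in \fan(1)} -\psi_D\bigl(A(Bv_\tau)\bigr)\, V(\tau),
\]
where $v_\tau$ is the primitive generator of $\tau$. On the other hand, $f_A^* D$ is the $\Q$-Cartier divisor with support function $\mu(\psi_D \circ A)$ on $\fan$ (by the Corollary at the end of Section~4), and applying $f_B^*$ to this gives
\[
(f_B^* \circ f_A^*) D \;=\; \sum_{\tau \in \fan(1)} -\mu(\psi_D \circ A)(Bv_\tau)\, V(\tau).
\]
Subtracting, the coefficient of $V(\tau)$ in $(f_B^*\circ f_A^*) D - (f_A \circ f_B)^* D$ equals
\[
\psi_D\bigl(A(Bv_\tau)\bigr) \;-\; \mu(\psi_D \circ A)(Bv_\tau).
\]

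The core of the argument is to show each such coefficient is non-negative. Fix a ray $\tau$ and set $w = Bv_\tau \in N_\R$. Since $\fan$ is complete, $w$ lies in some maximal cone $\sigma \in \fan(n)$; since $X$ is simplicial, the ray generators $u_1,\dots,u_n$ of $\sigma$ form an $\R$-basis of $N_\R$, so we may write $w = \sum_{i=1}^n \lambda_i u_i$ with $\lambda_i \geq 0$. By the concrete description of the interpolation function $\mu$,
\[
\mu(\psi_D \circ A)(w) \;=\; \sum_{i=1}^n \lambda_i\, \psi_D(A u_i).
\]
Ampleness of $D$ on the complete toric variety $X$ means $\psi_D$ is strictly upper convex; in particular, $\psi_D$ is concave as a positively homogeneous function on $N_\R$, hence super-additive. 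Therefore
\[
\psi_D(Aw) \;=\; \psi_D\!\left(\sum_{i=1}^n \lambda_i\, A u_i\right) \;\geq\; \sum_{i=1}^n \lambda_i\, \psi_D(A u_i) \;=\; \mu(\psi_D \circ A)(w),
\]
which gives the required non-negativity.

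Thus $(f_B^*\circ f_A^*) D - (f_A \circ f_B)^* D$ is an effective $T$-Weil divisor; and because $\fan$ is simplicial, every Weil divisor is $\Q$-Cartier, so it is an effective $\Q$-Cartier divisor.

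I expect the main obstacle to be bookkeeping with sign conventions: specifically, verifying that ``ample'' translates to the form of convexity (upper convexity / super-additivity of $\psi_D$) that yields the inequality in the correct direction, and checking that the value $\mu(\psi_D \circ A)(w)$ is independent of the chosen maximal cone $\sigma$ containing $w$ when $w$ lies on a lower-dimensional face (which follows from the continuity of $\mu(\psi_D\circ A)$ across faces in a simplicial fan).
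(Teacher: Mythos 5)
Your proof is correct and takes essentially the same route as the paper: compute both divisors ray by ray via Theorem~\ref{thm:pullback} and the interpolation $\mu$, then deduce the sign from the (Fulton-)convexity of $\psi_D$. The only cosmetic difference is that the paper works with the smallest cone $\overline{B\tau}$ containing $Bv_\tau$ rather than an arbitrary maximal cone; this sidesteps the well-definedness remark you flag at the end, but both are valid since $\mu(\psi_D\circ A)$ is a well-defined continuous piecewise linear function on the simplicial fan.
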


\begin{proof}
Write
\[
(f_B^*\circ f_A^*)D = \sum_{\tau\in\fan(1)} a_\tau V(\tau),\ \ \
(f_A\circ f_B)^*D = \sum_{\tau\in\fan(1)} b_\tau V(\tau).
\]

We will show that $a_\tau\ge b_\tau$ for every $\tau\in\fan(1)$, which is equivalent to
the lemma.

Let $\psi=\psi_D$ be the support function of $D$. For some $\tau\in\fan(1)$, let $v\in\tau$
be the ray generator. Let $\sigma=\overline{B\tau}$ be the smallest cone which contains
$B\tau$, and assume that $u_1,\cdots,u_d$ are the generators of the cone $\sigma$. Then
there are positive numbers $r_1,\cdots,r_d$ such that $B(v) = r_1 u_1 + \cdots + r_d u_d$.

By the formula for pulling back divisors, to compute $a_\tau$, we need to apply the {\em interpolation}
process, and obtain
\[
a_\tau = -[r_1\psi(Au_1)+\cdots+r_d\psi(Au_d)].
\]
We can also see that
\[
b_\tau = -\psi((A\circ B)(v)) = -\psi ( r_1 Au_1 + \cdots + r_d Au_d ).
\]
Now the fact $a_\tau\ge b_\tau$ comes from the fact that $\psi$ is (strictly) convex
since $D$ is ample.
\end{proof}

\begin{prop}
For a projective, complete, simple toric variety $X = X(\fan)$,
 a toric rational map $f_A$ is strongly algebraically stable if and
only if it is algebraically stable.
\end{prop}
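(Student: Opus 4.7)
The plan is to prove the nontrivial direction: algebraically stable implies strongly algebraically stable. I will proceed by induction on $k$, showing that $(f_A^k)^* = (f_A^*)^k$ as maps on $\CDivT(X)_\Q$; the base case $k = 1$ is tautological, so assume the equality holds for all exponents less than $k$.

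Pick an ample $T$-invariant $\Q$-Cartier divisor $D$ (such divisors exist because $X$ is projective). Apply the preceding effectivity lemma with the matrix pair $(A^{k-1}, A)$ playing the roles of $(A, B)$: since $f_A \circ f_{A^{k-1}} = f_{A^k}$, this gives that
\[
f_A^*\bigl((f_A^{k-1})^* D\bigr) - (f_A^k)^* D
\]
is an effective $\Q$-Cartier divisor. By the inductive hypothesis the first term equals $(f_A^*)^k D$, and by algebraic stability of $f_A$ the two terms have the same class in $\Pic(X)_\Q$. Hence the displayed difference is an effective, $T$-invariant, $\Q$-principal divisor.

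Any such divisor must vanish: write it as $\tfrac{1}{n}\Div(\chi^u) = \sum \tfrac{1}{n}\langle u, v_i\rangle V(\tau_i)$ with $u \in M$ and $n>0$; effectiveness forces $\langle u, v_i\rangle \geq 0$ for every ray generator $v_i$ of $\fan$, but if $u \neq 0$, the open half-space $\{\langle u, \cdot\rangle < 0\}$ would meet some maximal cone of the complete fan $\fan$, forcing one of that cone's ray generators to pair negatively with $u$. So the difference is zero, giving $(f_A^*)^k D = (f_A^k)^* D$ for every ample $T$-invariant $D$. On a projective toric variety, every $T$-Cartier divisor is a difference of two ample $T$-invariant $\Q$-divisors (add a large multiple of a fixed ample one), so linearity extends the equality to all of $\CDivT(X)_\Q$, completing the induction. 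The only delicate point is the effective-principal-implies-zero step, which uses completeness of $\fan$ in an essential way; the remaining work is bookkeeping for the lemma and a routine reduction from arbitrary to ample divisors.
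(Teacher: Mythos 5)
Your proof is correct and relies on the same effectivity lemma as the paper, but you organize the argument differently in a way that is cleaner. The paper argues contrapositively: if $f_A$ is not strongly AS then some ray $\tau$ has $\overline{A^k\tau}$ failing to map regularly, and strict convexity of the ample support function is used to get a \emph{strict} inequality $a_\tau > b_\tau$, whence a non-linear nonnegative support-function difference. That argument implicitly compares $(f_{A^k})^*\circ f_A^*$ to $(f_A^*)^{k+1}$ without explicitly confirming that these agree through step $k$, a point your forward induction handles cleanly by assuming the equality holds for all exponents below $k$ before invoking the lemma at step $k$. You also replace the ``strictly convex $\Rightarrow$ cannot be linear'' step with the crisper observation that on a complete fan an effective $T$-invariant $\Q$-principal divisor vanishes, which needs only completeness, not strictness. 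The final reduction from ample divisors to all of $\CDivT(X)_\Q$ by writing any $T$-Cartier $\Q$-divisor as a difference of two ample ones is standard on projective varieties and correctly invoked. In short: same lemma, same core idea, but your induction and explicit vanishing lemma make the logical structure tighter than the paper's version.
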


\begin{proof}
Since strongly AS implies AS, it suffices to show the other direction.
Assume that $f_A$ is not strongly AS, then there is a ray $\tau$ and
a positive integer $k$ such that $A^k(\overline{A\tau})$ is not contained
in any cone of $\fan$.

Let $D$ be any ample divisor, using the same notation as in the proof
of the above lemma, with $B = A^k$, we can see that $a_\tau > b_\tau$, since the
$A(u_i)$'s are not in a same cone, and $\psi$ is strictly convex.

Thus the difference between the support functions of $(f_A^{k+1})^*D$ and
that of $(f^*)^{k+1}D$ is a nonnegative function which is strictly positive
on $\tau$, hence cannot be linear.
This means $(f^{k+1})^*D \ne (f^*)^{k+1}D$ in $\Pic(X)$.
\end{proof}

\subsection{Applications of the criterion}
We will apply the above criterion (Theorem~\ref{thm:sas}) to give some results about stabilization in certain cases.

First, suppose all entries of $A$ are non-negative, i.e., $f_A$ is a polynomial monomial map. There is
a nice nonsingular toric model on which $f_A$ is algebraically stable, namely $(\P^1)^n$.

\begin{prop}
Every monomial polynomial map is strongly algebraically stable on $(\P^1)^n$, hence algebraically stable.
\end{prop}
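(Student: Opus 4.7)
The plan is to apply the geometric criterion of Theorem~\ref{thm:sas}: I must show that for every ray $\tau\in\fan(1)$ and every $k\in\N$, the cone closure $\overline{A^k(\tau)}$ maps regularly to $\fan$ under $A$. Recall from Example~\ref{ex:P1n} that the fan of $(\P^1)^n$ has rays generated by $\pm e_1,\ldots,\pm e_n$, and its cones are precisely those of the form $\text{cone}(\epsilon_{i_1} e_{i_1},\ldots,\epsilon_{i_r} e_{i_r})$ for distinct indices $i_j$ and signs $\epsilon_{i_j}\in\{\pm 1\}$; equivalently, a cone belongs to $\fan$ iff its generators are standard basis vectors (up to sign) with no index appearing twice with opposite signs.

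The main observation is that non-negativity of $A$ is preserved under iteration: $A^k$ also has non-negative entries for every $k\ge 1$. Fix a ray $\tau\in\fan(1)$ generated by $v=\epsilon e_i$ with $\epsilon\in\{\pm 1\}$. Then
\[
A^k v \;=\; \epsilon\cdot(\text{$i$-th column of }A^k)
\]
has all entries of common sign $\epsilon$ (some may be zero), so $A^k v$ lies inside the orthant $\sigma_\epsilon:=\text{cone}(\epsilon e_1,\ldots,\epsilon e_n)\in\fan$. The minimal cone of $\fan$ containing it, namely $\overline{A^k(\tau)}$, is therefore the face of $\sigma_\epsilon$ generated by $\{\epsilon e_j:(A^k)_{j,i}>0\}$.

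To verify regularity, apply $A$ to any such generator $\epsilon e_j$: the image is $\epsilon$ times the $j$-th column of $A$, once more a vector all of whose coordinates have common sign $\epsilon$, hence an element of $\sigma_\epsilon$. Consequently $A(\overline{A^k(\tau)})\subseteq\sigma_\epsilon\in\fan$, and Theorem~\ref{thm:sas} delivers strong algebraic stability of $f_A$ on $(\P^1)^n$. Algebraic stability then follows immediately, because $\Pic((\P^1)^n)_\Q$ is a quotient of $\CDivT((\P^1)^n)_\Q$.

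There is essentially no obstacle here: the argument is a direct check, and the conceptual content is simply that the fan of $(\P^1)^n$ is determined by sign-coherence of the standard coordinates, a property manifestly preserved by any non-negative integer matrix under iteration.
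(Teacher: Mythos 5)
Your argument is correct and follows the same route as the paper: reduce to the criterion of Theorem~\ref{thm:sas}, note that powers of a nonnegative matrix stay nonnegative so each $A^k\tau$ lands in the signed orthant $\sigma_\epsilon$, observe that $\overline{A^k(\tau)}$ is therefore a face of $\sigma_\epsilon$, and check that $A$ maps that face back into $\sigma_\epsilon$. The only cosmetic difference is that you handle both sign cases at once via the parameter $\epsilon$, whereas the paper treats $\sigma_+$ and then invokes symmetry.
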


\begin{proof}
Let $\fan$ be the fan such that $X(\fan)=(\P^1)^n$. The rays of $\fan$ are given by
$\tau_i=\R_{\ge 0}\cdot e_i$ and $-\tau_i$, for $i=1,\cdots,n$.
The morphism $A$ maps each of $\tau_i$ into the cone $\sigma_+$ generated by $e_1,\cdots,e_n$, and
maps each of $-\tau_i$ into the cone $\sigma_-$ generated by $-e_1,\cdots,-e_n$.

Observe that the compositions of polynomial maps are still polynomial maps.
So $A^k$ are all polynomial monomial maps for $k\ge 1$. Also notice that
$A^k(\tau_i)\subset\sigma_+$, so $\overline{A^k(\tau_i)}$ is a face of $\sigma_+$.
Hence there is a subset of indexes $I\subset \{1,\cdots,n\}$ such that $\overline{A^k(\tau_i)}$
is generated by $\{e_i|i\in I\}$. Since each $A^k(e_i)\in\sigma_+$, we have that
$A(\overline{A^k(\tau_i)})\subset\sigma_+$. This means $\overline{A^k(\tau_i)}$ maps regularly
for all $k$. By symmetry, we also know that $A(\overline{A^k(-\tau_i)})\subset\sigma_-$.
Therefore, the map $f_A$ is strongly algebraically stable on $X(\fan)=(\P^1)^n$.
\end{proof}

We will discuss more properties of monomial maps on $(\P^1)^n$ in Section~\ref{sec:P1n}.
The above property is about maps on a fixed toric variety $(\P^1)^n$. Next, we will fix some map, and
ask whether there exists a toric variety on which the map is strongly algebraically stable.
We give partial answers for maps satisfying some conditions.

\begin{thm}
\label{thm:stable}
Suppose that $A\in \M_n(\Z)$ is an integer matrix.
\begin{enumerate}
\item If there is a unique eigenvalue $\lambda$ of $A$ of maximal modulus, with algebraic multiplicity one;
      then $\lambda\in\R$, and there exists a simplicial toric birational model X (maybe singular)
and a $k\in\N$ such that $f_A^k$ is strongly algebraically stable on $X$.
\item If $\lambda,\bar{\lambda}$ are the only eigenvalues of $A$ of maximal modulus, also with algebraic
      multiplicity one, and if
      $\lambda= |\lambda|\cdot e^{2\pi\i\theta}$, with $\theta\not\in\Q$; then there is no toric birational model
      which makes $f_A$ strongly algebraically stable.
\end{enumerate}
\end{thm}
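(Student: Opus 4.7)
My plan is to deploy the geometric criterion (Theorem~\ref{thm:sas}) in both directions. For \textbf{part (1)}, $\lambda\in\R$ follows from complex conjugation: if $\lambda\notin\R$, then $\bar\lambda$ is also an eigenvalue of the real matrix $A$ with $|\bar\lambda|=|\lambda|$, contradicting uniqueness. Replacing $A$ by $A^2$ if needed, assume $\lambda>0$. Let $v_\lambda$ be a $\lambda$-eigenvector and $H$ the $A$-invariant direct sum of the other generalized eigenspaces, so $A|_H$ has spectral radius $\rho'<\lambda$. Because the projectivization of $A$ has $[v_\lambda]$ as an attracting fixed point with basin $\P(N_\R)\setminus\P(H)$, I can choose a full-dimensional simplicial rational cone $\sigma_+$ with $v_\lambda\in\mathrm{relint}(\sigma_+)$, $\sigma_+\cap H=\{0\}$, such that for all sufficiently large $k$ one has $A^k(\sigma_+)\setminus\{0\}\subset\mathrm{int}(\sigma_+)$. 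Set $\sigma_-=-\sigma_+$ and extend $\{\sigma_+,\sigma_-\}$ to a complete simplicial rational fan $\fan$ in $N_\R$, keeping all additional rays outside $H$ (possible by density of rational directions). Enlarging $k$ so that $A^k$ pushes each of the finitely many remaining rays into $\mathrm{int}(\sigma_+)\cup\mathrm{int}(\sigma_-)$, Theorem~\ref{thm:sas} applies to $A^k$: for every ray $\tau\in\fan(1)$ one has $\overline{A^k\tau}\in\{\sigma_+,\sigma_-\}$, and both $\sigma_\pm$ map regularly into themselves under $A^k$.

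For \textbf{part (2)}, suppose for contradiction that $f_A$ is strongly algebraically stable on some complete simplicial $X(\fan)$. Let $E_2$ be the $2$-dimensional real $A$-invariant subspace corresponding to $\{\lambda,\bar\lambda\}$ and $E'$ the complementary $A$-invariant subspace, on which $A$ has spectral radius $\rho'<|\lambda|$. On $E_2$, $A$ acts as scaling by $|\lambda|$ composed with rotation by angle $2\pi\theta$. Completeness of $\fan$ forces its rays to span $N_\R$, so some ray $\tau_0\in\fan(1)$ has generator $v_0$ whose $E_2$-component $v_0^{(2)}$ is nonzero. Theorem~\ref{thm:sas} gives $\sigma_n:=\overline{A^n\tau_0}\in\fan$ for all $n$, and finiteness of $\fan$ makes the sequence eventually periodic, $\sigma_{n_0+p}=\sigma_{n_0}=:\sigma$. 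Iterating Lemma~\ref{lem:coneclosure} then yields $A^p\sigma\subset\sigma$, so $A^{np}w\in\sigma$ for all $n\ge 0$, where $w:=A^{n_0}v_0$ has $E_2$-part $w^{(2)}=A^{n_0}v_0^{(2)}\ne 0$.

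The contradiction comes by rescaling: the points $A^{np}w/|\lambda|^{np}$ all stay in the closed cone $\sigma$. Their $E_2$-components are obtained by rotating $w^{(2)}$ by $2\pi np\theta$ and lie on the circle $C\subset E_2$ of radius $\|w^{(2)}\|$, while their $E'$-components decay at rate $(\rho'/|\lambda|)^{np}\to 0$. Since $\theta\notin\Q$, Kronecker's theorem makes the angles $2\pi np\theta$ dense modulo $2\pi$, so these rescaled points accumulate densely on $C$. Being closed, $\sigma$ contains $C$; being a convex cone, it contains $\R_{\ge 0}\cdot C=E_2$. This contradicts strong convexity of $\sigma$, since $E_2$ contains lines through the origin. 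The main technical obstacle lies in part (1), where I must arrange \emph{simultaneously} that the fan is complete, simplicial, rational, avoids $H$, and has every ray attracted into $\sigma_\pm$ under a single common power $A^k$; this is handled by fixing the (finite) fan first and only afterward choosing $k$ uniformly large enough for all its rays.
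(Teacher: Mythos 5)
Your proof is correct, and part (2) is actually a sharper, more rigorous version of the argument the paper gives. For \textbf{part (1)} you and the paper share the same core idea — build a simplicial cone $\sigma_+$ around the dominant eigendirection such that $A^k$ is eventually contracting into $\mathrm{int}(\sigma_+)\cup\mathrm{int}(\sigma_-)$ — but the paper realizes the completion explicitly: it picks rational $v_1,\dots,v_n$ spanning $\sigma_+$ with $v\in\mathrm{int}(\sigma_+)$ and $\R v_i\notin H$, and takes the $(\P^1)^n$-type fan whose maximal cones are $\mathrm{cone}(s_1v_1,\dots,s_nv_n)$, $s_i\in\{\pm1\}$. That fan is automatically complete, simplicial, rational, and has all rays off $H$, so the paper never needs the abstract step you invoke (``extend $\{\sigma_+,\sigma_-\}$ to a complete simplicial rational fan keeping rays outside $H$''). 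Your completion claim is believable, but as stated it is the weakest point in your writeup: completing a partial fan is standard, yet controlling where the new rays land requires an argument (subdivision only \emph{adds} rays, so one cannot simply ``fix'' a bad completion afterward). Switching to the paper's explicit $(\P^1)^n$-type fan bypasses this entirely. For \textbf{part (2)} your route is genuinely different in structure and is cleaner than the paper's. The paper picks a ray $\tau$ whose iterates approach $\Gamma$, locates a generic direction $v_0\in\Gamma$, shows $\overline{A^k\tau}\cap\Gamma$ becomes two-dimensional, and then asserts without detail that this cone ``cannot map regularly under all $A^k$.'' You instead combine Theorem~\ref{thm:sas}, pigeonhole on the finite fan, and Lemma~\ref{lem:coneclosure} to produce a single cone $\sigma\in\fan$ with $A^p\sigma\subset\sigma$, then rescale by $|\lambda|^{-np}$ to show (via Kronecker density of $\{np\theta \bmod 1\}$, closedness of $\sigma$, and the cone property) that $\sigma\supset E_2$, contradicting strong convexity. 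This is a concrete, complete contradiction where the paper's is only sketched, and it makes better use of Lemma~\ref{lem:coneclosure}; it is worth noting as a tightening of the published argument.
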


\begin{proof}
For (1), let $v\in \R^n$ be the eigenvector corresponding to the largest real eigenvalue $\lambda$, then the subspace
$\R v$ is attracting. We can find integral vectors $v_1,\cdots, v_n$, linearly independent over $\R$, such that
\begin{itemize}
\item $v$ is in the interior of the cone generated by $v_1,\cdots,v_n$.
\item $A^n (\R v_i)\to \R v$ for all $i=1,\cdots,n$, as elements of $\R\P^n$.
\end{itemize}
The rays $\{ \R_{\ge 0}\cdot v_i, \R_{\ge 0}\cdot (-v_i)\ |\ i=1\cdots,n\}$ generates a fan $\fan$
similar to the way we form $\P^1\times\cdots\times\P^1$. That is, the maximal cones of $\fan$ are
generated by the sets $\{s_1v_1,\cdots,s_nv_n\}$ where $s_i\in\{+1,-1\}$. All other cones are faces of
some maximal cone.
It is easy to see that for some $k$, $f_A^k$ is strongly algebraically stable on $X(\fan)$.

To prove (2), let $\lambda, \bar{\lambda}$ be the largest eigenvalue pair, and $\Gamma\subset\R^n$ be the
two dimensional invariant subspace corresponding to them. Since the fan $\fan$ is complete, there is at least
one ray $\tau\in\fan(1)$ such that under iterations, $A^k \tau$ will approach $\Gamma$. Moreover, since
$A|_\Gamma$ is an irrational rotation on rays, we know that for all $v\in\Gamma$, there is a sequence $k_i$ such
that $A^{k_i}\tau\to\R_{\ge 0}\cdot v$.

Consider the set $\fan\cap\Gamma=\{\sigma\cap\Gamma\ |\ \sigma\in\fan\}$, it is a fan in $\Gamma$. Each cone in it
is strictly
convex, but not necessarily rational. Pick $v_0 \in\Gamma$ which lies in the interior of some two dimensional cone
of $\fan\cap\Gamma$, and pick a sequence $k_i$ such that $A^{k_i}\tau\to\tau_0=\R_{\ge 0}\cdot v_0$.

Since $A^{k_i}\tau\to\tau_0$ and $\fan$ consists of only finitely many cones, there must be some $k$ such that
$\tau_0\in\overline{A^k\tau}$.
But $\tau_0$ is in the interior of some two dimensional cone of $\fan\cap\Gamma$, so we know that
$\overline{A^k\tau}\cap\Gamma$ is a two dimensional cone in $\Gamma$. Finally, we know that
$\overline{A^k\tau}\cap\Gamma$ cannot map regularly under all $A^k$, so $\overline{A^k\tau}$ cannot either.
Thus $A$ can never be made strongly algebraically stable.
\end{proof}

We do not know what the correct statement would be for
the missing case $\lambda= |\lambda|\cdot e^{2\pi\i\theta}$, with $\theta\in\Q$.

\begin{rem}
Some of our results in the last section and this section were obtained independently by Mattias Jonsson and
Elizabeth Wulcan ~\cite{JW}. They obtained the pull back formula and the criterion for stability.
One of the main theorems in their paper ~\cite[Theorem A']{JW} deal with smooth stabilization
of a monomial map by refining a given fan. This aspect of the stabilization is
more delicate and is not discussed in our paper.
Part (1) of Theorem~\ref{thm:stable} in the current paper is similar to the Theorem B in~\cite{JW}.
The difference is that they have further assumption on the eigenvalues, thus they can guarantee that $f_A$
is already stable. They also discuss the special case of monomial maps on toric surfaces (two dimensional
toric varieties), which is not dealt in this paper.
\end{rem}


\section{Monomial maps on projective spaces}

The motivation for studying toric rational maps comes from the study of monomial maps
on projective spaces. So let us come back to monomial maps and try to understand more
about them with the help of techniques
from toric varieties. Some results in this section is well known, but we give another
proof from a toric viewpoint.

\subsection{Pulling back divisors and divisor classes}
In this subsection, we will show that pulling-back divisors tells us information about homogenization of a monomial map
on projective spaces, and pulling-back divisors classes tells us information about the degree of a monomial map
on projective spaces.

Given an $n\times n$ integer matrix $A=(a_{i,j})_{1\le i, j\le n}$, the associated monomial map
$\C^n\to \C^n$ is given by
\[
\textstyle{(X_1,\cdots,X_n) \longmapsto\Bigl( \prod_{j=1}^n X_j^{a_{1,j}},\cdots ,\prod_{j=1}^n X^{a_{n,j}} \Bigr).  }
\]
Then we use the embedding $\C^n\hookrightarrow\P^n$ defined by $(X_1,\cdots, X_n)\mapsto [1;X_1;\cdots;X_n]$
to identify $\C^n$ with the open subset $U_0=\{x_0\ne 0\}\subset\P^n$. The inverse map $U_0\to\C^n$ is
given by $X_i=x_i/x_0$, and this is used to homogenize the monomial map.
After homogenizing, there is another integer matrix, with size $(n+1)\times (n+1)$, denoted by
$h(A)=(b_{i,j})_{0\le i, j\le n}$, such that
\[
f_A( [x_0;\cdots;x_n] )= \textstyle{\Bigl[ \prod_{j=0}^n x_j^{b_{0,j}}; \cdots ; \prod_{j=0}^n x_j^{b_{n,j}} \Bigr]}.
\]

Recall the structure of the fan associated to the projective space. The one dimensional cones are generated
by the standard basis
$e_1,\cdots, e_n$ and $e_0=-(e_1+\cdots +e_n)$. Denote them by $\tau_i=\R_{\ge 0}\cdot e_i$ for $i=0,\cdots,n$.
Consider the divisors $D_i=V(\tau_i)=\{x_i=0\}$. If we want to pull it back by $f_A$, what we do is
to pull back the defining equation.
This will give us the equation $\prod_{j=0}^n x_j^{b_{i,j}}=0$, which means
\begin{align*}
f_A^*(D_i) = b_{i,0}\cdot D_0 + b_{i,1}\cdot D_1 + \cdots+b_{i,n}\cdot D_n.
\end{align*}

On the other hand, by Theorem~\ref{thm:pullback},
if $\psi_i$ is the support function of the divisor $D_i$, then
\begin{align*}
f_A^*(D_i) = -\psi_i(Ae_0)\cdot D_0 -\psi_i(Ae_0)\cdot D_1 - \cdots - \psi_i(Ae_n)\cdot D_n.
\end{align*}
Thus we obtain the equality $b_{i,j}=-\psi_i(Ae_j)$.

\begin{ex}
\label{ex:unstable}
Consider the monomial map $f_A$ associated to the matrix
$A=\bigl( \begin{smallmatrix} -1& -2\\ 2&0 \end{smallmatrix} \bigr)$.
We know $f_A(X,Y)= (X^{-1}Y^{-2}, X^2)$ on $\C^2$, and then using homogenization, we can write down
the formula for $f_A:\P^2\dashrightarrow\P^2$ as $f_A[w;x;y]=[w^2xy^2 ; w^5 ; x^3y^2]$.

Let us consider the divisor $V(\tau_0)=\{w=0\}$. By pulling back the defining function, we know $f_A^*(V(\tau_0))$ is
defined by $w^2xy^2=0$, which, as a divisor, is $2\cdot V(\tau_0)+1\cdot V(\tau_1)+2\cdot V(\tau_2)$.
To apply the above discussion, remember that $V(\tau_0)$
corresponds to the support function $\psi_0$ on $\fan$ such that $\psi_0(e_0)=-1$ and $\psi_0(e_1)=\psi_0(e_2)=0$.
It is not hard to see
that $\psi_0(a_1,a_2)=\min\{0, a_1, a_2\}$. Therefore,
\begin{align*}
f_A^*(V(\tau_0)) &= -\psi_0(Ae_0)\cdot V(\tau_0) -\psi_0(Ae_1)\cdot V(\tau_1)-\psi_0(Ae_2)\cdot V(\tau_2)\\
             &= 2\cdot V(\tau_0)+1\cdot V(\tau_1)+2\cdot V(\tau_2).
\end{align*}
\end{ex}

In general, the formulae of $\psi_i$ for $\P^n$ is as follows.
\begin{equation}
\label{eq:supp_fun}
\left\{
\begin{split}
\psi_0 (a_1,\cdots,a_n) &= \min\{0,a_1,\cdots,a_n\},\\
\psi_i (a_1,\cdots,a_n) &= \min\{0, -a_i, a_j - a_i; j\ne i\}\text{ for $i=1,\cdots, n$.}
\end{split}
\right.
\end{equation}

Since the homogenization matrix $h(A)=(-\psi_i(Ae_j))$ is related to the pulling back divisors, we can translate
the condition of algebraic stability to a condition on $h(A)$.

\begin{prop}
A monomial map $f_A:\P^n\dashrightarrow\P^n$ is strongly algebraically stable
if and only if $h(A^k)=h(A)^k$ for $k=1,2,\cdots$.
\end{prop}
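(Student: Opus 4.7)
The plan is to package the pullback formula established just before the proposition into a matrix identity on a fixed basis of $\CDivT(\P^n)_\Q$, and then simply compare the two sides. Since $\P^n$ is smooth, $\CDivT(\P^n) = \WDivT(\P^n)$ is free abelian of rank $n+1$ on the torus-invariant prime divisors $D_0 = V(\tau_0),\ldots,D_n = V(\tau_n)$, so it suffices to compare the effect of $(f_A^k)^*$ and $(f_A^*)^k$ on each $D_i$.

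Combining the definition $h(A) = (b_{i,j})$ with Theorem~\ref{thm:pullback} and the explicit formulae \eqref{eq:supp_fun} for the support functions $\psi_i$, I would first record the identity
\[
f_A^*(D_i) \;=\; \sum_{j=0}^n b_{i,j}\, D_j \;=\; \sum_{j=0}^n h(A)_{i,j}\, D_j,
\]
which is already verified in the computation immediately preceding the proposition. Iterating this identity $k$ times (i.e.\ applying $f_A^*$ to each $D_j$ on the right-hand side, and so on) shows that the operator $(f_A^*)^k$ sends $D_i$ to $\sum_{j}\bigl(h(A)^k\bigr)_{i,j}\, D_j$.

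For the other side, I would use the observation that composition of monomial maps corresponds to matrix multiplication: a direct exponent calculation gives $f_A \circ f_B = f_{AB}$, and therefore $f_A^k = f_{A^k}$ as rational self-maps of $\P^n$. Applying the displayed formula with the integer matrix $A^k$ in place of $A$ then yields $(f_A^k)^*(D_i) = \sum_j h(A^k)_{i,j}\, D_j$.

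Comparing the two expressions shows that $(f_A^k)^* = (f_A^*)^k$ on $\CDivT(\P^n)_\Q$ holds precisely when $h(A^k) = h(A)^k$, and requiring this for every $k \in \N$ is exactly the statement of the proposition. There is no serious obstacle; the only point to be careful about is keeping track of indices so that composition of pullbacks corresponds to matrix multiplication in the correct order, but this is already built into the convention used in the displayed formula above.
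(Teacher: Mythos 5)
Your proposal is correct and takes essentially the same approach as the paper: both identify the entries of $h(A^k)$ with the coefficients of $(f_A^k)^*(V(\tau_i))$ and the entries of $h(A)^k$ with those of $(f_A^*)^k(V(\tau_i))$, then conclude via the freeness of $\CDivT(\P^n)$ on $V(\tau_0),\ldots,V(\tau_n)$. You merely spell out more explicitly the two ingredients the paper leaves implicit, namely that iterating the pullback identity corresponds to matrix powers of $h(A)$ and that $f_A^k = f_{A^k}$.
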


\begin{proof}
The entries of the $i$-th row in $h(A^k)$ are the coefficients of $(f_A^k)^*(V(\tau_i))$, whereas
the entries of the $i$-th row in $h(A)^k$ are the coefficients of $(f_A^*)^k(V(\tau_i))$. The proposition then follows
from the fact that $\CDivT(\P^n)$ is generated by $V(\tau_0),\cdots,V(\tau_n)$.
\end{proof}

What happened in unstable cases is that when we iterate the map $h(A)$ directly, some terms got canceled out.
For example, the map we mentioned above $f_A:[w;x;y]\mapsto[w^2xy^2 ; w^5 ; x^3y^2]$ is not stable,
because when we iterate once, the map
\[
[w;x;y]\mapsto[w^9x^8y^8;w^{10}x^5y^{10};w^{15}x^6y^4]
\]
which corresponds to $h(A)^2$, has a common factor $w^9x^5y^4$ on each component; so we need to
divide all components by $w^9x^5y^4$, and obtain $[w;x;y]\mapsto [x^3y^4;wy^6;w^6x]$, whose components
corresponds to $h(A^2)$.

Next, we turn our attention to the pull back of divisor classes, i.e., elements of $\Pic(\P^n)$.
It is well known that $\Pic(\P^n)\cong\Z$, and the isomorphism is given by the degree. We thus have the map
$\deg: \CDivT(\P^n)\to\Pic(\P^n)$
given by $\deg(\sum a_i V(\tau_i))=\sum a_i$. Furthermore, for a monomial map $f_A$ on the projective space,
it is easy to deduce that the pull back on Picard group is the same as the degree of the map, and is given by
$\deg(f_A^*D)$ for any divisor $D$ of degree one. We also denote this number by $\deg(f_A)$.
If $\psi$ is the support function for $D$, then we know the degree of the
monomial map $f_A$ is given by
\begin{equation}
\label{eq:deg}
\deg(f_A) = \sum_{i=0}^n -\psi(A e_i).
\end{equation}

For example, let $\psi$ be any one of the $\psi_i$ listed in (\ref{eq:supp_fun}),
then we can get a concrete formula for $\deg(f_A)$. In particular,
let $\psi=\psi_0$, we have
\begin{align*}
-\psi(a_1,\cdots,a_n) &= -\psi_0(a_1,\cdots,a_n) \\
                      &= -\min\{0,a_1,\cdots,a_n\}\\
                      &= \max\{0,-a_1,\cdots,-a_n\}.
\end{align*}
Then we rediscover the formula in~\cite[Proposition 2.14]{HP}.
\[
\deg(f_A) = \sum_{j=1}^n \max_{1\le i\le n}\{0,-a_{ij}\}+\max_{1\le i\le n}\Bigl\{0,\sum_{j=1}^n a_{ij}\Bigr\}.
\]

The definition of algebraic stability for rational maps on
$\P^n$ states that $f_A$ is algebraically stable if and only if
$\deg(f_A^k)=\deg(f_A)^k$ for all $k$.
%
Another property of the degree sequence is that it is {\em submultiplicative}, i.e.,
$\deg(f_A^{k+k'})\le \deg(f_A^k)\cdot\deg(f_A^{k'})$.
We will use this property several times in the next subsection.

\subsection{Estimates of the degree sequence}
\label{sec:deg_seq_est}
In this subsection, we are going to study the degree sequence $\{\deg(f_A^k)\}_{k=1}^\infty$.
We are particularly interested in the asymptotic behavior of the degree sequence.
An important numerical invariant is the {\em asymptotic degree growth}
\[
\delta_1(f_A)=\lim_{k\to\infty} (\deg(f_A^k))^{\frac 1 k}.
\]
It is known that for a monomial map $f_A$, $\delta_1(f_A)=\rho(A)$, the spectral radius of the matrix $A$
(\cite[Theorem 6.2]{HP}).
We will refine this result and give more precise estimates on the degree growth of a monomial map.

\begin{rem}
For $\P^n$, the asymptotic degree growth is the same as the first dynamical degree,
which will introduce in more detail in Section ~\ref{sec:d1_mono}.
\end{rem}

For two sequences $\{\alpha_k\}_{k=1}^\infty$ and $\{\beta_k\}_{k=1}^\infty$ of positive real numbers,
we say that they are {\em asymptotically equivalent},
denoted by $\alpha_k\sim \beta_k$, if there exists two positive constants $c_1\ge c_0 > 0$,
independent of $k$, such that $c_0\cdot\beta_k \le \alpha_k \le c_1\cdot\beta_k$ for all $k$.

Let us start from some examples. In the following examples, we assume that $a>b>0$ are two natural numbers.

\begin{ex} For $A = \bigl(\begin{smallmatrix} a & 0 \\ 0 & b \end{smallmatrix}\bigr)$,
i.e., $f_A$ is the map
$(x,y)\mapsto (x^{a},y^{b})$ on $\C^2$. It is easy to see that $\deg(f_A^k)= a^k$.
\end{ex}

\begin{ex} For $A = \bigl(\begin{smallmatrix} a & 1 \\ 0 & a \end{smallmatrix}\bigr)$,
then $A^k = \bigl(\begin{smallmatrix} a^k & ka^{k-1} \\ 0 & a^k \end{smallmatrix}\bigr)$,
i.e., $f_A^k$ is the map
$(x,y)\mapsto (x^{a^k}y^{ka^{k-1}},y^{a^k})$ on $\C^2$. Thus for large $k$ (more precisely, for $k\ge a$),
\[
\deg(f_A^k)= a^k + ka^{k-1} = (1+k/a)\cdot a^k\sim k\cdot a^k.
\]
This shows that in the non-diagonalizable case, we may have some polynomial
multiplying with the power of the spectral radius in the estimate.
\end{ex}

\begin{ex}
\label{ex:C0=C1}
For $A = \bigl(\begin{smallmatrix} -a & 0 \\ 0 & b \end{smallmatrix}\bigr)$,
i.e., $f_A$ is the map
$(x,y)\mapsto (x^{-a},y^{b})$ on $\C^2$. It is not hard to verify that
\[
\deg(f_A^k)=\begin{cases} a^k & \text{ for $k$ even,} \\
                        a^k+b^k & \text{ for $k$ odd.}\end{cases}
\]
The degree depends on the parity of $k$, but we still have $\deg(f_A^k)\sim a^k$. Moreover, the
sequence $\{\deg(f_A^k)/a^k\}_{k=1}^\infty$ has only one limit point.
\end{ex}

\begin{ex}
\label{ex:C0not=C1}
For $A=\Bigl(\begin{smallmatrix} -a & 0 & 0 \\ 0 & -a & 0 \\ 0 & 0 & b \end{smallmatrix}\Bigr)$,
the map $f_A:\C^3\to\C^3$ is given by
$(x,y,z)\mapsto (x^{-a},y^{-a},z^b)$.
The degree sequence of $f_A$ is
\[
\deg(f_A^k)=\begin{cases} a^k & \text{ for $k$ even,}\\
                        2\cdot a^k+b^k & \text{ for $k$ odd.} \end{cases}
\]
We still have $\deg(f_A^k)\sim a^k$, but the
sequence $\{\deg(f_A^k)/a^k\}_{k=1}^\infty$ has two limit points: 1 and 2.
\end{ex}

\begin{ex}
Moreover, consider the monomial map $f:\C^n\dashrightarrow \C^n$ defined by
$f(x_1,\cdots,x_n)=(x_2^{-a},x_3^a,\cdots,x_n^a, x_1^a)$.
A careful calculation will show that $\deg(f^k)\sim a^k$, and
the sequence $\{\deg(f_A^k)/a^k\}_{k=1}^\infty$ has $n$ limit points: $1,2,\cdots,n-1$ and $n$.
\end{ex}

\begin{ex}
Consider the monomial map $f_A$ associated to the matrix
$A=\bigl( \begin{smallmatrix} -1& -2\\ 2&0 \end{smallmatrix} \bigr)$, as in Example~\ref{ex:unstable}.
The matrix $A$ has two conjugate eigenvalues $\lambda = (-1+ \sqrt{-15})/2$ and $\bar{\lambda}$; and
$\lambda/\bar{\lambda}$ is not a root of unity. We will show in Theorem~\ref{thm:deg_bounds} that
$\deg(f_A^k)\sim |\lambda|^k$, but when we consider the sequence $\{\deg(f_A^k)/|\lambda|^k\}_{k=1}^\infty$,
we no longer have finitely many limit points. In fact, we will prove that the sequence is dense in some
interval (Proposition~\ref{prop:irrational}).
\end{ex}

The main result for general monomial maps is the following theorem.

\begin{thm}
\label{thm:deg_bounds}
Given an $n\times n$ integer matrix $A$ with nonzero determinant, assume that $\rho(A)$ is the
spectral radius of $A$.
Then there exist two positive constants $C_1\ge C_0>0$
and a unique integer $\ell$ with $0\le\ell\le n-1$, such that
\begin{equation}
C_0\cdot k^\ell\cdot\rho(A)^k \le \deg(f_A^k) \le C_1\cdot k^\ell\cdot\rho(A)^k
\end{equation}
for all $k\in\N$. Or, equivalently, $\deg(f_A^k) \sim k^\ell\cdot\rho(A)^k$.

In fact, $(\ell+1)$ is the size of the largest Jordan block of $A$ among the ones corresponding
to eigenvalues of maximal modulus.
\end{thm}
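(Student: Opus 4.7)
The plan is to show that $\deg(f_A^k)$ is comparable to the matrix sup-norm $\|A^k\|_\infty := \max_{1\le i,j\le n}|(A^k)_{ij}|$, with constants depending only on $n$, and then invoke the classical Jordan-form fact that $\|A^k\|_\infty \sim k^\ell \rho(A)^k$, where $\ell+1$ is the size of the largest Jordan block of $A$ among eigenvalues of maximum modulus. Together these give the asymptotic equivalence $\deg(f_A^k)\sim k^\ell \rho(A)^k$.

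To compare $\deg(f_A^k)$ with $\|A^k\|_\infty$, I would apply formula (\ref{eq:deg}) with $\psi = \psi_0$ from (\ref{eq:supp_fun}) to get
\[
\deg(f_A^k) = \sum_{j=1}^n \max_i\{0,-(A^k)_{ij}\} + \max_i\Bigl\{0,\sum_{j=1}^n (A^k)_{ij}\Bigr\}.
\]
The upper bound $\deg(f_A^k) \le n(n+1)\|A^k\|_\infty$ is immediate since each of the $n+1$ summands is at most $n\|A^k\|_\infty$. For the matching lower bound, set $M = \|A^k\|_\infty$ attained at some position $(i_0,j_0)$, and split into cases. If $(A^k)_{i_0 j_0} \le -M$, the $j_0$-th column-max term alone contributes $\ge M$. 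Otherwise $(A^k)_{i_0 j_0} = M$, and I consider the $i_0$-th row sum $S$. If $S \ge M/2$ the last term contributes $\ge M/2$; if instead $S < M/2$ then $\sum_{j\ne j_0}(A^k)_{i_0 j} < -M/2$, so by pigeonhole some entry $(A^k)_{i_0 j_1}$ is less than $-M/(2(n-1))$, making the $j_1$-th column-max term at least $M/(2(n-1))$. In every case $\deg(f_A^k) \ge M/(2n)$.

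The Jordan-form estimate is standard: writing $A = PJP^{-1}$ and expanding $J^k$ blockwise, each entry is a binomial expression $\binom{k}{m}\lambda^{k-m}$ of magnitude $O(k^m|\lambda|^k)$, which gives the upper bound on $\|A^k\|_\infty$. For the lower bound, apply $A^k$ to a generator of a longest Jordan chain for a dominant eigenvalue $\lambda$ (taking the real or imaginary part when $\lambda \not\in\R$) and observe that the result has a fixed-direction component of magnitude comparable to $k^\ell\rho(A)^k$. Combining these with the previous paragraph yields the theorem; adjusting constants for the finitely many small $k$ takes care of the uniform bound for all $k \in \N$. Uniqueness of $\ell$ is immediate from comparing two such asymptotic bounds, and $0 \le \ell \le n-1$ since Jordan block sizes lie in $\{1,\ldots,n\}$. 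The main obstacle is the lower bound of the middle paragraph: a priori the maxima in the degree formula could suppress the dominant growth of $A^k$, and one has to track column minima and row sums simultaneously to verify that a constant fraction of $\|A^k\|_\infty$ is always captured.
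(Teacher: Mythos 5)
Your proposal is correct, and it takes a genuinely different route from the paper. The paper extends the degree formula to a function $\nu\colon\M_n(\R)\to\R_{\ge 0}$, establishes that $\nu$ is continuous, positively homogeneous, subadditive, and vanishes only at the origin, and then runs a compactness argument: the Jordan-form lemma places $A^k/(k^\ell\rho(A)^k)$ inside a fixed compact annulus $\{c_0\le\|M\|\le c_1\}$ that misses $0$, so $\nu$ takes values in $[C_0,C_1]$ with $C_0>0$ there, and evaluating at these normalized matrices gives the two-sided bound. You instead prove a direct two-sided comparison $\deg(f_A^k)\asymp\|A^k\|_\infty$ with $n$-dependent constants, via an explicit case analysis on the location and sign of the entry of $A^k$ of largest modulus, and then cite the same Jordan-form norm asymptotic. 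Both arguments are sound. Your casework is correct (including the pigeonhole step in case $S<M/2$; and the degenerate case $n=1$ is trivial since then $\deg(f_A^k)=\|A^k\|_\infty$ directly). One advantage of the paper's route is that the abstract properties of $\nu$ are exactly what is reused later for weighted projective spaces, so the same proof carries over verbatim there; your hands-on comparison is tied to the specific formula for $\psi_0$ on $\P^n$, so it is less portable but arguably more transparent about \emph{why} the degree cannot undershoot the norm, which is precisely the risk you flag in your final paragraph. The constant $n(n+1)$ in your upper bound is a bit wasteful ($2n$ suffices), but that is harmless.
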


The idea we use to prove the theorem is the following observation. The assignment
$A\mapsto \deg(f_A)$ can be extended naturally to a function $\nu:\M_n(\R)\to \R$, and
the function $\nu$ is {\em almost} a norm on $\M_n(\R)$. Thus some
techniques on norms also applies to the study of degrees.

More precisely, in formula (\ref{eq:deg}), notice that the right hand side
can be defined over the real numbers because $\psi$ is a continuous piecewise linear function defined on
$N_\R\cong \R^n$. The only requirement is that the associated divisor of $\psi$ has degree one.
Thus, we define a function $\nu:\M_n(\R)\to \R$ by
\begin{equation}
\label{eq:nu}
\nu(M) = \sum_{i=0}^n -\psi(Me_i).
\end{equation}

\begin{prop}
The following properties hold for the function $\nu$.
\begin{itemize}
\item[(i)] Any support function $\psi$ of a $T$-divisor of degree one on $\P^n$ will give the same $\nu$, i.e.,
           $\nu$ is independent of the choice of $\psi$.
\item[(ii)] $\nu$ is a continuous function when we equip $\M_n(\R)\cong \R^{n^2}$ and $\R$ with the usual topology
      of the Euclidean spaces.
\item[(iii)] $\nu(M)\ge 0$, and $\nu(M)=0$ if and only if $M=0$.
         Thus, in fact, we have $\nu:\M_n(\R)\to \R_{\ge 0}$.
\item[(iv)] $\nu(rM)=r\cdot\nu(M)$ for $r\ge 0$.
\item[(v)] $\nu(M+M')\le \nu(M) + \nu(M')$.
\end{itemize}
\end{prop}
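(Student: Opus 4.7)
The plan is to split the argument into two stages: first establish (i), which grants the freedom to compute $\nu$ using any support function of a degree-one divisor; then specialize to $\psi_0$ from \eqref{eq:supp_fun}, for which
\[
-\psi_0(a_1,\ldots,a_n) = \max\{0,-a_1,\ldots,-a_n\}.
\]
This explicit formula will make properties (ii)--(v) fall out of standard facts about maxima of linear functionals.

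For (i), I would use that $\Pic(\P^n) \cong \Z$ is generated by the class of a hyperplane, so any two $T$-Cartier divisors of degree one differ by a principal divisor $\Div(\chi^u)$ for some $u \in M$. Their support functions $\psi, \psi'$ then differ by the linear functional $v \mapsto \langle u, v\rangle$. Since the rays of the fan of $\P^n$ satisfy $e_0 + e_1 + \cdots + e_n = 0$, we have $\sum_{i=0}^n Me_i = 0$ for every $M \in \M_n(\R)$, and hence
\[
\sum_{i=0}^n \bigl(\psi(Me_i) - \psi'(Me_i)\bigr) = \bigl\langle u, \textstyle\sum_i Me_i\bigr\rangle = 0,
\]
so $\nu$ is independent of the choice of $\psi$.

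Parts (ii), (iv), and (v) are then routine: continuity in (ii) follows since $\psi_0$ is continuous piecewise linear and $M \mapsto Me_i$ is linear; every cone of the fan is closed under scaling by $r \ge 0$, so $\psi_0(rv) = r\psi_0(v)$ gives (iv); and $-\psi_0$, being the maximum of the linear functionals $0, -v_1, \ldots, -v_n$, is sublinear, so $-\psi_0(v+w) \le -\psi_0(v) + (-\psi_0(w))$, which yields (v) after summing over columns. The main (though still elementary) obstacle is the nondegeneracy in (iii). Non-negativity is immediate from $-\psi_0 \ge 0$. For the converse, if $\nu(M) = 0$ then every summand $-\psi_0(Me_i)$ vanishes, which forces each column $Me_i$ (for $i = 0, 1, \ldots, n$) to have entries $\ge 0$. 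But $Me_0 = -(Me_1+\cdots+Me_n)$, so $Me_1+\cdots+Me_n \le 0$ entrywise; combined with each $Me_i \ge 0$ for $i \ge 1$, this forces $Me_1 = \cdots = Me_n = 0$ and therefore $M = 0$. The relation $\sum_i e_i = 0$ on the fan of $\P^n$ is precisely what makes $\nu$ an \emph{almost norm} rather than merely a seminorm.
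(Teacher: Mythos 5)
Your proof is correct, and it takes a genuinely different route from the paper's on parts (i), (iii), and (v). For (i), the paper first establishes (ii) and (iv), then argues that $\nu$ agrees with $\deg(f_A)$ on $\M_n(\Z)$ (which is manifestly $\psi$-independent), and transfers to all of $\M_n(\R)$ by homogeneity plus continuity; your argument is more direct, going through the observation that two degree-one support functions differ by the linear function $v \mapsto \langle u, v\rangle$ of a principal divisor, which vanishes upon summing over the rays because $\sum_i e_i = 0$. For (iii) and (v), the paper explicitly notes that one \emph{could} specialize to $\psi_0$ and compute (the route you take), but prefers a more intrinsic argument: any degree-one divisor on $\P^n$ is very ample, hence its support function is strictly convex, and nonnegativity, subadditivity, and the nondegeneracy $\nu(M)=0\Rightarrow M=0$ all follow from (strict) convexity together with strong convexity of the cones of the fan. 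Your explicit computation with $-\psi_0 = \max\{0,-a_1,\ldots,-a_n\}$ and the entrywise-sign argument for (iii) is elementary and self-contained, at the cost of relying on a particular coordinate formula; the paper's argument is formula-free and makes visible \emph{why} the result holds (ampleness plus the relation $\sum_i e_i = 0$), generalizing more readily to other toric settings where an explicit $\psi_0$ is less convenient.
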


\begin{proof}
First, notice that (ii) is true because $\psi$ is continuous,
and (iv) is true because $\psi$ is linear on each ray.
Then (i) follows by (ii), (iv), and the fact that $\nu(A)=\deg(f_A)$ for $A\in\M_n(\Z)$, which is independent of $\psi$.

Once we know that $\nu$ is independent of the choice of $\psi$, one can pick any $\psi$, e.g. $\psi=\psi_0$,
and prove (iii) and (v) directly. However, we would like to offer a more intrinsic explanation for
(iii) and (v).

Since $\psi$ is the support function for a degree one divisor $D$ on $\P^n$, we know that $D$ is
very ample, and hence $\psi$ is strictly convex (see~\cite[p.70]{Fu}).
The first part of (iii), and (v), can be easily deduced
from convexity. Strict convexity is needed to show that $\nu(M)=0$ implies $M=0$.

Suppose $M\ne 0$, then $Me_0,Me_1,\cdots,Me_n$ cannot be all zero. But since $Me_0+\cdots+Me_n=0$,
and the cones in the fan for $\P^n$ are strongly convex (they do not contain any line through the origin),
$Me_0,\cdots, Me_n$ cannot all lie in the same cone. Thus by strict convexity, we know
\[
\nu(M)= - \sum_{i=0}^n \psi(Me_i)> -\psi(\sum_{i=0}^n Me_i)=\psi(0)=0.
\]
\end{proof}

By properties (iii)--(v), we know that the only reason to prevent $\nu$ from
being a norm is that we may have $\nu(M)\ne\nu(-M)$. Indeed, for the $n\times n$ identity
matrix $I_n$, we have $\nu(I_n)=1$, but $\nu(-I_n)=n-1$. So $\nu$ is not a norm.
However, if we define $\bar{\nu}(M)=\nu(M)+\nu(-M)$, then $\bar{\nu}$ is a norm.

Before we prove Theorem~\ref{thm:deg_bounds}, we need an elementary lemma from linear algebra.
\begin{lem}
\label{lemma:norm}
For an $n\times n$ matrix $A\in \M_n(\C)$ and any norm $\norm{\cdot}$ defined on $\M_n(\C)$, there
exists two positive constants $c_1\ge c_0>0$
and a unique integer $\ell$ with $0\le\ell\le n-1$, such that
\begin{equation}
c_0\cdot k^\ell\cdot\rho(A)^k \le \norm{A^k} \le c_1\cdot k^\ell\cdot\rho(A)^k
\end{equation}
for all $k\in\N$. Here $\rho(A)$ is the spectral radius of $A$, and $(\ell+1)$ is the size
of the largest Jordan block among those blocks corresponding
to eigenvalues of maximal modulus $\rho(A)$.
\end{lem}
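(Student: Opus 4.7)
The plan is to reduce to a single convenient norm via norm-equivalence, then pass to Jordan form and analyze block-by-block. Concretely, since all norms on the finite-dimensional space $\M_n(\C)$ are equivalent, it suffices to prove the bounds for, say, the maximum-entry norm $\norm{M}_{\max}=\max_{i,j}|m_{ij}|$. Writing $A=PJP^{-1}$ with $J$ in Jordan canonical form, we have $A^k=PJ^kP^{-1}$, so $\norm{A^k}$ and $\norm{J^k}$ differ by at most a multiplicative constant independent of $k$. Since $J$ is block-diagonal, the problem further localizes to a single Jordan block $J_m(\lambda)$, whose $k$-th power has entries $\binom{k}{j}\lambda^{k-j}$ for $0\le j\le m-1$ on and above the diagonal (and zero below).

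For the upper bound, each block of size $m_i$ with eigenvalue $\lambda_i$ contributes entries of modulus at most $\binom{k}{m_i-1}|\lambda_i|^{k-m_i+1}\le C\cdot k^{m_i-1}|\lambda_i|^k$. Blocks with $|\lambda_i|<\rho(A)$ then contribute $o(k^\ell\rho(A)^k)$, while blocks with $|\lambda_i|=\rho(A)$ and $m_i\le\ell+1$ contribute at most a constant times $k^\ell\rho(A)^k$. Summing over the finitely many blocks yields $\norm{J^k}_{\max}\le C_1\cdot k^\ell\rho(A)^k$.

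For the lower bound, I would single out one Jordan block of size exactly $\ell+1$ associated to some eigenvalue $\lambda$ with $|\lambda|=\rho(A)$. Its top-right entry in $J^k$ is precisely $\binom{k}{\ell}\lambda^{k-\ell}$, a single monomial of modulus $\binom{k}{\ell}\rho(A)^{k-\ell}\ge c\cdot k^\ell\rho(A)^k$ for some $c>0$ and all sufficiently large $k$; the finitely many small $k$ are absorbed into the constant (using that $\rho(A)>0$, so $J^k\ne 0$). The main obstacle is exactly this lower bound when several distinct eigenvalues share the maximum modulus $\rho(A)$: then entries of $J^k$ in other locations may oscillate or cancel, and one might worry that $\norm{J^k}$ dips below $c_0 k^\ell\rho(A)^k$ infinitely often. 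The resolution is that we never need to sum contributions across blocks; reading a single entry of a single well-chosen Jordan block already witnesses the lower bound, and no cancellation can occur within that block since only one eigenvalue contributes to its entries.
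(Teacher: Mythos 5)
Your proof is correct and takes essentially the same approach as the paper's: reduce by norm equivalence to the max-entry norm, conjugate to Jordan form (changing the norm by a constant factor), and analyze the growth of the entries of $J^k$. You simply spell out the upper- and lower-bound arguments in more detail — in particular, the observation that a single entry of one maximal Jordan block already yields the lower bound without risk of cancellation — which the paper compresses into the statement that for large $k$ one has $\norm{J^k}\sim k^{\ell}\rho(A)^k$.
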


\begin{proof}
It suffices to prove the lemma for the $L^\infty$ norm on $\M_n(\C)$. Thus, for $A=(a_{ij})$, we set
$\norm{A}=\norm{A}_\infty=\max_{i,j}\{|a_{ij}|\}$ for the rest of the proof.

Observe that $\norm{AB}\le n\cdot\norm{A}\cdot\norm{B}$. If we write $A=PJP^{-1}$, where $J$ is the Jordan canonical
form of $A$, then we have
\[
(n^2\cdot\norm{P}\cdot\norm{P^{-1}})^{-1}\cdot\norm{J^k}\le \norm{A^k}\le (n^2\cdot\norm{P}\cdot\norm{P^{-1}})\cdot\norm{J^k}.
\]
For large $k$, it is easy to see that $\norm{J^k}=\binom{k}{\ell}\cdot\rho(A)^k\sim k^\ell\cdot\rho(A)^k$, where
$(\ell+1)$ is as described above. Hence the lemma follows.
\end{proof}

Now we are ready to proof the theorem.

\begin{proof}[Proof of Theorem~\ref{thm:deg_bounds}]
For the matrix $A\in\M_n(\Z)$, consider the set
\[
\Bigl\{ \frac{A^k}{k^\ell\rho(A)^k} ~\Bigl|~ k\in\N\Bigr\}\subset \M_n(\R).
\]
By lemma~\ref{lemma:norm},
it is a subset of a compact set $S=\{ M\in\M_n(\R) ~|~ c_0\le \norm{M}\le c_1\}$ for some $c_1\ge c_0>0$.
Since $\nu$ is continuous, we have $\nu(S)\subset [C_0,C_1]$ for some reals $C_1\ge C_0\ge 0$. Moreover,
$0\not\in S$, thus $C_0>0$. This gives us
\[
C_0 \le \nu\Bigl(\frac{A^k}{k^\ell\cdot\rho(A)^k}\Bigr) \le C_1.
\]
for all $k\in\N$, with $C_1\ge C_0>0$.

Finally, since $k^\ell\cdot\rho(A)^k >0$, and $\nu(A^k)=\deg(f_A^k)$, we have
\[
C_0\cdot k^\ell\cdot\rho(A)^k \le \deg(f_A^k) \le C_1\cdot k^\ell\cdot\rho(A)^k
\]
This concludes the proof.
\end{proof}

\begin{cor}
\label{cor:deg_bounds}
If $A$ is diagonalizable, then we have
\begin{equation}
\label{eq:diag}
C_0\cdot\rho(A)^k \le \deg(f_A^k) \le C_1\cdot\rho(A)^k
\end{equation}
for some constants $C_1\ge C_0\ge 1$.
\end{cor}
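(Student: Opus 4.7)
The plan is to deduce this corollary directly from Theorem~\ref{thm:deg_bounds} by specializing to the diagonalizable case and then mildly adjusting the constants. First, recall from the last sentence of Theorem~\ref{thm:deg_bounds} that the exponent $\ell$ appearing in the bounds is one less than the size of the largest Jordan block of $A$ among those corresponding to eigenvalues of maximal modulus. When $A$ is diagonalizable, every Jordan block of $A$ has size one, so in particular this largest block has size one and $\ell = 0$. Substituting into the bounds of Theorem~\ref{thm:deg_bounds} immediately gives
\[
C_0 \cdot \rho(A)^k \;\le\; \deg(f_A^k) \;\le\; C_1 \cdot \rho(A)^k
\]
for some constants $C_1 \ge C_0 > 0$, which already establishes the inequality (\ref{eq:diag}) up to the normalization of the constants.

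The remaining point is to arrange $C_0 \ge 1$ (and hence $C_1 \ge 1$). For this I would use the submultiplicativity of the degree sequence, $\deg(f_A^{k+k'}) \le \deg(f_A^k)\cdot\deg(f_A^{k'})$, which was recorded just before \S\ref{sec:deg_seq_est}. Applying Fekete's lemma to $\log \deg(f_A^k)$ yields
\[
\delta_1(f_A) \;=\; \lim_{k \to \infty} \deg(f_A^k)^{1/k} \;=\; \inf_{k \ge 1} \deg(f_A^k)^{1/k}.
\]
Taking $k \to \infty$ in the upper bound already established (with $\ell = 0$) shows that $\delta_1(f_A) = \rho(A)$, so the infimum above is $\rho(A)$, and therefore $\deg(f_A^k) \ge \rho(A)^k$ for every $k \ge 1$. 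This means the lower bound in (\ref{eq:diag}) holds with $C_0 = 1$, and replacing $C_1$ by $\max(C_1, 1)$ ensures $C_1 \ge 1$ as well.

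I expect no real obstacle here: the only substantive content is the observation that diagonalizability forces $\ell = 0$ in Theorem~\ref{thm:deg_bounds}, and the improvement $C_0 \ge 1$ is a standard consequence of Fekete's lemma applied to the submultiplicative sequence $\deg(f_A^k)$.
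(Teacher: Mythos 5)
Your proposal is correct and takes essentially the same approach as the paper: both observe that diagonalizability forces $\ell=0$ in Theorem~\ref{thm:deg_bounds} and then invoke submultiplicativity of the degree sequence to upgrade $C_0>0$ to $C_0\ge 1$; your appeal to Fekete's lemma is just a repackaging of the paper's direct contradiction argument (if $\deg(f_A^k)/\rho(A)^k = r < 1$ then $\deg(f_A^{kj})/\rho(A)^{kj}\le r^j\to 0$). One tiny imprecision: the upper bound alone only gives $\delta_1(f_A)\le\rho(A)$; the equality $\delta_1(f_A)=\rho(A)$ needs the lower bound as well (or the Hasselblatt--Propp result already quoted in the paper), but since you have both this is harmless.
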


\begin{proof}
In the diagonalizable case, $\ell=0$, hence we have (\ref{eq:diag}).
Recall that the degree sequence is submultiplicative. Thus, if we have $\frac{\deg(f_A^k)}{\rho(A)^k}= r <1$ for some $k$,
then
\[
\frac{\deg(f_A^{kj})}{\rho(A)^{kj}}\le \frac{\deg(f_A^k)^j}{\rho(A)^{kj}}= r^j \to 0 \text{ as $j\to +\infty$.}
\]
This contradicts the existence of $C_0>0$. Therefore, $\frac{\deg(f_A^k)}{\rho(A)^k}\ge 1$ for all $k$, and we can choose $C_0\ge 1$.
\end{proof}

If we impose more conditions on the matrix $A$, we can obtain more precise estimates on the degree sequence.

\begin{thm}
\label{thm:degseq1}
Assuming that the matrix $A$ is diagonalizable,
and there is a unique eigenvalue $\lambda_1$ of maximal modulus, which is real and positive.
Also, assume that the eigenvalues of $A$
are arranged as $\lambda_1 >|\lambda_2|\ge |\lambda_3|\ge \cdots\ge|\lambda_m|$ for some $~m$.
Then there is a constant $C\ge 1$ such that
\[
\deg(f_A^k)=C\cdot \lambda_1^k + O(|\lambda_2|^k).
\]
\end{thm}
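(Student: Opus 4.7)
The plan is to exploit the function $\nu : \M_n(\R)\to\R_{\ge 0}$ from (\ref{eq:nu}), which satisfies $\nu(A^k)=\deg(f_A^k)$, is continuous, positive-homogeneous of degree one, and subadditive. Since $A$ is diagonalizable over $\C$, I would begin by writing the spectral resolution $A=\sum_{i=1}^m \lambda_i P_i$, where the $P_i$ are the (possibly complex) spectral projectors satisfying $P_i^2=P_i$, $P_iP_j=0$ for $i\neq j$, and $\sum_i P_i = I$. Because $\lambda_1\in\R$ has multiplicity one, its projector $P_1$ is a \emph{real} matrix. Iterating gives $A^k=\lambda_1^k P_1 + R_k$ with $R_k=\sum_{i\ge 2}\lambda_i^k P_i$, and conjugating non-real eigenvalues in pairs shows $R_k\in\M_n(\R)$. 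By the triangle inequality in any norm, $\|R_k\| \le c\cdot |\lambda_2|^k$ for some constant $c>0$.

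Next, setting $E_k := R_k/\lambda_1^k$, we have $A^k/\lambda_1^k = P_1 + E_k$ with $\|E_k\|=O((|\lambda_2|/\lambda_1)^k)$. Since $\lambda_1>0$, positive-homogeneity gives $\deg(f_A^k)=\nu(A^k)=\lambda_1^k\,\nu(P_1+E_k)$. Applying subadditivity in both directions,
\[
-\nu(-E_k)\ \le\ \nu(P_1+E_k)-\nu(P_1)\ \le\ \nu(E_k),
\]
so $|\nu(P_1+E_k)-\nu(P_1)|\le \bar\nu(E_k)$, where $\bar\nu(M):=\nu(M)+\nu(-M)$ is a genuine norm on the finite-dimensional space $\M_n(\R)$ (as already observed in the paper). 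By equivalence of norms, $\bar\nu(E_k)=O((|\lambda_2|/\lambda_1)^k)$. Multiplying back by $\lambda_1^k$ yields the desired asymptotic:
\[
\deg(f_A^k)=\nu(P_1)\,\lambda_1^k + O(|\lambda_2|^k),
\]
so we take $C:=\nu(P_1)$.

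It remains to verify $C\ge 1$. This follows from Corollary~\ref{cor:deg_bounds}, which in the diagonalizable case gives $\deg(f_A^k)\ge \rho(A)^k=\lambda_1^k$ for all $k$ (its proof uses submultiplicativity of the degree sequence to promote the constant $C_0$ up to at least $1$). Dividing by $\lambda_1^k$ and passing to the limit $\deg(f_A^k)/\lambda_1^k\to C$ forces $C\ge 1$; in particular $C>0$, so the $\lambda_1^k$ term genuinely dominates.

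The only delicate step is handling the approximation of $\nu$ near $P_1$: $\nu$ is not linear, only subadditive and positively homogeneous, so one cannot invoke a Taylor expansion. The trick is the bilateral subadditivity argument above, which replaces differentiability by the Lipschitz estimate coming from the norm $\bar\nu$. Everything else is linear algebra (the spectral resolution and the crude bound $\|R_k\|=O(|\lambda_2|^k)$) together with standard properties of $\nu$ already established in the paper.
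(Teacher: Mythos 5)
Your proof is correct, and it takes a genuinely different route from the paper's. The paper works vector by vector: it decomposes each ray generator $e_i$ into eigenvector components $e_i = v_{i,1}+\cdots+v_{i,m}$, tracks how the ray $\R_{\ge 0}\cdot A^k e_i$ converges (when $v_{i,1}\ne 0$) to $\R_{\ge 0}\cdot v_{i,1}$, and uses the piecewise linearity of the support function $\psi$ to argue that, for $k$ large, the linear piece $L_k$ of $\psi$ evaluated at $v_{i,1}$ stabilizes to $\psi(v_{i,1})$; summing over $i$ gives $C=\sum_{i=0}^n -\psi(v_{i,1})$. You instead work at the level of matrices: using the real spectral projector $P_1$ you write $A^k/\lambda_1^k = P_1 + E_k$ with $\|E_k\|=O((|\lambda_2|/\lambda_1)^k)$, then bound $|\nu(P_1+E_k)-\nu(P_1)|$ by $\bar\nu(E_k)$ via bilateral subadditivity. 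This is cleaner in two respects: it avoids the geometric cone-convergence argument (and the case split on whether $v_{i,1}=0$), and it extracts a Lipschitz estimate for $\nu$ directly from subadditivity, which is the conceptually right replacement for differentiability here. The two formulas for $C$ agree, since $\nu(P_1)=\sum_{i=0}^n-\psi(P_1 e_i)=\sum_{i=0}^n-\psi(v_{i,1})$. The final step $C\ge 1$ via Corollary~\ref{cor:deg_bounds} is the same in both proofs. One small bonus of your formulation: the Lipschitz estimate $|\nu(P_1+E_k)-\nu(P_1)|\le\bar\nu(E_k)$ shows that the error constant in the $O(|\lambda_2|^k)$ term depends only on the spectral projectors, which the paper's cone-by-cone analysis does not make explicit.
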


\begin{proof}
First, given a vector $v\in\R^n$, since $A$ is diagonalizable, we can represent $v$ uniquely as
\[
v = v_1 + v_2 + \cdots + v_m,
\]
where each $v_j\in\C^n$ is an eigenvector corresponding to $\lambda_j$. We have $v_1\in\R^n$ since $\lambda_1$ is real.
Thus
\[
A^k v = \lambda_1^k v_1 + \lambda_2^k v_{2}+\cdots+\lambda_m^k v_{m}.
\]
Let $\psi$ be the support function of some degree one divisor $D$ in $\P^n$.
For each $k$, there is some maximal cone $\sigma_{k}$ such that $A^k v\in\sigma_{k}$. Let
$L_{k}$ be the linear function such that $L_{k}|_{\sigma_{k}}=\psi|_{\sigma_{k}}$.
Notice that $L_k$ can be defined on $\C^n$ as a linear map, and we have
\begin{align*}
\psi(A^k v) &= L_{k}(A^k v) = L_{k}\bigl(\sum_{j=1}^m \lambda_j^k v_{j}\bigr)\\
         &= \lambda_1^k \cdot L_{k}(v_{1}) + \sum_{j=2}^m \lambda_j^k \cdot L_{k}(v_{j})\\
         &=  L_{k}(v_1)\cdot\lambda_1^k+O(|\lambda_2|^k).
\end{align*}
There are two cases here: $v_1\ne 0$, or $v_1=0$.

First, if $v_1\ne 0$, then for the
rays $\tau = \R_{\ge 0}\cdot v$, we know $A^k\tau\to \R_{\ge 0}\cdot v_{1}$.
Thus for large $k$, we can choose $\sigma_k$ so that both $A^k v\in\sigma_k$ and $v_1\in\sigma_k$.
Since $L_{k}|_{\sigma_{k}}=\psi|_{\sigma_{k}}$ for the cone $\sigma_{k}$,
we know that for large $k$, the value $L_{k}(v_{1})=\psi(v_{1})$ is independent of $k$, and
$\psi(A^k v) = \psi(v_{1})\cdot\lambda_1^k+O(|\lambda_2|^k)$.
Second, if $v_1=0$, then it is obvious that $\psi(A^k v) = O(|\lambda_2|^k)$.

Now let's look at the fan structure of projective spaces.
For the ray generators $e_0,e_1,\cdots,e_n$ of $\P^n$, if $e_i$ is decomposed as
\begin{equation}
\label{eq:decomp}
e_i = v_{i,1} + v_{i,2} + \cdots + v_{i,m}
\end{equation}
for $i=0,1,\cdots,n$, where each $v_{i,j}$ is an eigenvector corresponding to the eigenvalue $\lambda_j$.
Then
\[
\psi(A^k e_i) = \psi(v_{i,1})\cdot\lambda_1^k+O(|\lambda_2|^k).
\]
If we set
\begin{equation}
\label{eq:const}
C=\sum_{i=0}^n -\psi(v_{i,1}),
\end{equation}
then we can compute the degree sequence $\deg(f_A^k)$ as
\begin{align*}
\deg(f_A^k) &= \deg((f_A^k)^* D) \\
            &= \sum_{i=1}^n -\psi(A^k e_i)\\
            &= C\cdot \lambda_1^k + O(|\lambda_2|^k).
\end{align*}
The fact that $C\ge 1$ is a consequence of Corollary~\ref{cor:deg_bounds}.
\end{proof}

Notice that, on our way to prove the theorem, we also derive a concrete formula for the constant $C$ in (\ref{eq:const}).

\begin{thm}
\label{thm:degseq2}
Assuming that the matrix $A$ is diagonalizable,
and there is a unique eigenvalue $\lambda_1$ of maximal modulus, which is real and negative.
Also assume that the eigenvalues of $A$
are arranged as $(-\lambda_1)>|\lambda_2|\ge |\lambda_3|\ge \cdots\ge|\lambda_m|$ for some $~m$.
Then there are two positive constants $C_0,C_1$, not necessarily distinct,
and satisfying $1\le C_0\le C_1^2$, such that
\[
\deg(f_A^{2k+l})=C_l\cdot |\lambda_1|^{2k+l} + O(|\lambda_2|^{2k+l}),
\]
where $l=0,1$.
\end{thm}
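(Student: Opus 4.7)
The plan is to mimic the proof of Theorem~\ref{thm:degseq1} while carefully tracking the alternating sign of $\lambda_1^k$ coming from $\lambda_1<0$. First I would decompose each ray generator $e_i$ (for $i=0,1,\ldots,n$) of the fan of $\P^n$ in the eigenbasis of $A$, writing $e_i=v_{i,1}+v_{i,2}+\cdots+v_{i,m}$ with $v_{i,j}$ an eigenvector for $\lambda_j$; since $\lambda_1\in\R$ we have $v_{i,1}\in\R^n$. Applying $A^k$ yields
\[
A^k e_i = \lambda_1^k\, v_{i,1} + O(|\lambda_2|^k),
\]
and by formula (\ref{eq:deg}) the asymptotic behavior of $\deg(f_A^k)$ is controlled by $\sum_i -\psi(A^k e_i)$ for a fixed support function $\psi$ of a degree-one divisor on $\P^n$.

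The key point is that the leading direction of $A^k e_i$ alternates with the parity of $k$: for even $k=2k'$, $\lambda_1^k=|\lambda_1|^k>0$ and $A^k e_i$ is eventually contained in any maximal cone $\sigma_k$ containing $v_{i,1}$, so, as in Theorem~\ref{thm:degseq1},
\[
\psi(A^{2k'}e_i) = \psi(v_{i,1})\cdot|\lambda_1|^{2k'} + O(|\lambda_2|^{2k'}).
\]
For odd $k=2k'+1$, $\lambda_1^k=-|\lambda_1|^k$, so $A^k e_i$ asymptotically points in the direction of $-v_{i,1}$; letting $L_k$ be the linear extension of $\psi|_{\sigma_k}$ with $\sigma_k$ containing $-v_{i,1}$, linearity gives $L_k(v_{i,1})=-\psi(-v_{i,1})$, and hence
\[
\psi(A^{2k'+1}e_i) = \psi(-v_{i,1})\cdot|\lambda_1|^{2k'+1} + O(|\lambda_2|^{2k'+1}).
\]
(Rays with $v_{i,1}=0$ contribute only to the error.) Summing over $i$ gives the desired asymptotic with
\[
C_0 := -\sum_{i=0}^n \psi(v_{i,1}), \qquad C_1 := -\sum_{i=0}^n \psi(-v_{i,1}).
\]

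It remains to establish the inequalities $1\le C_0\le C_1^2$. For the lower bound, Corollary~\ref{cor:deg_bounds} shows that in the diagonalizable case $\deg(f_A^k)\ge|\lambda_1|^k$ for every $k$; specializing to even $k$ and letting $k\to\infty$ forces $C_0\ge 1$. For the upper bound I would use submultiplicativity: $\deg(f_A^{2(2k+1)})\le \deg(f_A^{2k+1})^2$. Substituting the asymptotic expansions on both sides yields
\[
C_0\,|\lambda_1|^{4k+2} + O(|\lambda_2|^{4k+2}) \;\le\; C_1^2\,|\lambda_1|^{4k+2} + O(|\lambda_1|^{2k+1}|\lambda_2|^{2k+1}),
\]
and dividing by $|\lambda_1|^{4k+2}$ and letting $k\to\infty$ gives $C_0\le C_1^2$.

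The main subtlety is the careful bookkeeping of which maximal cone contains $A^k e_i$ for large $k$: the cones containing $v_{i,1}$ and $-v_{i,1}$ are in general distinct, and the strict convexity of $\psi$ makes $\psi(v_{i,1})$ and $\psi(-v_{i,1})$ genuinely different quantities. This is exactly the mechanism producing the parity dependence already visible in Example~\ref{ex:C0not=C1}, where $C_0=1$ and $C_1=2$.
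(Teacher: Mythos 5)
Your proof is correct and follows essentially the same route as the paper: split into even and odd subsequences, obtain the two constants $C_0,C_1$ from the eigendecomposition of the ray generators, derive $C_0 \ge 1$ from Corollary~\ref{cor:deg_bounds}, and use submultiplicativity $\deg(f_A^{4k+2}) \le \deg(f_A^{2k+1})^2$ to get $C_0 \le C_1^2$. The only stylistic difference is that you compute $C_1 = -\sum_{i}\psi(-v_{i,1})$ explicitly by tracking the sign flip of $\lambda_1^k$, whereas the paper phrases the odd case as rerunning the Theorem~\ref{thm:degseq1} argument on $A^2$ with starting vectors $Ae_i$; these are the same calculation, and your version has the small virtue of making the parity mechanism (and hence why $C_0\neq C_1$ can occur) visible.
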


\begin{proof}
We consider the subsequences $\{\deg(f_A^{2k})\}$ and $\{\deg(f_A^{2k+1})\}$.
Since $A^2$ satisfies the condition in Theorem~\ref{thm:degseq1}, with the unique eigenvalue
$|\lambda_1|^2$ with maximal modulus, thus
\[
\deg(f_A^{2k})=C_0\cdot |\lambda_1|^{2k} + O(|\lambda_2|^{2k})
\]
for some $C_0\ge 1$. For the subsequence $\{\deg(f_A^{2k+1})\}$, we consider $Ae_i$ instead of $e_i$
in (\ref{eq:decomp}) in the proof of Theorem~\ref{thm:degseq1}, and apply the map $f_A^2$ on these vectors.
We then get
\[
\deg(f_A^{2k+1})=C_1\cdot |\lambda_1|^{2k+1} + O(|\lambda_2|^{2k+1})
\]
for some $C_1\ge 1$.

Finally, for any $k$, we have,
\[
\deg(f_A^{4k+2})/|\lambda_1|^{4k+2}\le \bigl(\deg(f_A^{2k+1})/|\lambda_1|^{2k+1}\bigr)^2.
\]
As $k\to\infty$, the left side converges to $C_0$, while the right side converges to $C_1^2$.
So the relation $C_0\le C_1^2$ follows. This completes the proof.
\end{proof}

From the proof, we cannot tell if the two constants $C_0$ and $C_1$ are the same or not.
Notice that Examples ~\ref{ex:C0=C1} and ~\ref{ex:C0not=C1} are both examples of the theorem.
However, we have $C_0=C_1=1$ for Example~\ref{ex:C0=C1}, but $C_0=1$, $C_1=2$
for Example~\ref{ex:C0not=C1}.
This shows that both cases are possible.

The idea in the proof of Theorem~\ref{thm:degseq2} of considering subsequences can be pushed further
to prove the following more general result.

\begin{thm}
\label{thm:periodic_constants}
Assume that the matrix $A$ is diagonalizable,
and assume for each eigenvalue $\lambda$ of $A$ of maximum modulus, $\lambda/\bar{\lambda}$ is a root of unity.
Then there is a positive integer $p$, and $p$ constants $C_0,C_1,\cdots, C_{p-1}\ge 1$, such that
\[
\deg(f_A^{pk+l})=C_l\cdot |\lambda_1|^{pk+l} + O(|\lambda_2|^{pk+l}),
\]
where $l=0,1,\cdots,p-1$.
\end{thm}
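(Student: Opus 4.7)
The strategy generalizes the proof of Theorem~\ref{thm:degseq2} by introducing a uniform period $p$ so that, after passing to $A^p$, all maximum-modulus eigenvalues become the single positive real $|\lambda_1|^p$, and then analyzing $p$ separate arithmetic progressions modulo $p$. Since $\lambda/\bar\lambda$ is a root of unity for each maximum modulus eigenvalue $\lambda = |\lambda| e^{2\pi\i\theta}$, we have $\theta\in\Q$, so some positive integer power of $\lambda$ is a positive real. Choose a single positive integer $p$ that works simultaneously for every such $\lambda$: that is, $\lambda^p = |\lambda|^p$ for every eigenvalue of maximum modulus. Then the matrix $A^p$ is diagonalizable with $|\lambda_1|^p$ as its unique maximum-modulus eigenvalue (positive and real), so Theorem~\ref{thm:degseq1} will be the model.

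For each fixed residue $l\in\{0,1,\dots,p-1\}$, I would analyze the subsequence $\{\deg(f_A^{pk+l})\}_{k\ge 0}$ exactly as in the proof of Theorem~\ref{thm:degseq1}, but applied to the vectors $A^l e_i$ in place of $e_i$. Writing the eigendecomposition
\[
A^l e_i = \sum_{j} \lambda_j^l\, v_{i,j},
\]
where $v_{i,j}$ is the eigenvector component of $e_i$ corresponding to $\lambda_j$, and applying $A^{pk}$ gives
\[
A^{pk+l} e_i \;=\; |\lambda_1|^{pk}\cdot w_{i,l} \;+\; O(|\lambda_2|^{pk+l}),
\qquad
w_{i,l} \;=\; \sum_{j:\,|\lambda_j|=|\lambda_1|} \lambda_j^l\, v_{i,j},
\]
using the key identity $\lambda_j^{pk+l} = |\lambda_1|^{pk}\cdot\lambda_j^l$ for top-modulus $\lambda_j$. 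Because $A$ has real entries, conjugate eigenvectors pair up and $w_{i,l}\in\R^n$.

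For each $(i,l)$ with $w_{i,l}\neq 0$, the ray $\R_{\ge 0}\cdot A^{pk+l} e_i$ converges to $\R_{\ge 0}\cdot w_{i,l}$, so for all large $k$ the point $A^{pk+l} e_i$ lies in a fixed maximal cone $\sigma_{i,l}$ containing $w_{i,l}$. Letting $L_{i,l}$ denote the linear extension of $\psi|_{\sigma_{i,l}}$, I get
\[
\psi(A^{pk+l}e_i) \;=\; |\lambda_1|^{pk}\, L_{i,l}(w_{i,l}) \;+\; O(|\lambda_2|^{pk+l}),
\]
and when $w_{i,l}=0$ the $i$-th contribution is absorbed into the error. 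Summing via \eqref{eq:deg},
\[
\deg(f_A^{pk+l}) \;=\; C_l\cdot |\lambda_1|^{pk+l} \;+\; O(|\lambda_2|^{pk+l}),
\quad \text{with } C_l = -\,|\lambda_1|^{-l}\sum_{i=0}^n L_{i,l}(w_{i,l}).
\]
The bound $C_l\ge 1$ follows from Corollary~\ref{cor:deg_bounds}: dividing the inequality $\deg(f_A^{pk+l})\ge \rho(A)^{pk+l}$ by $|\lambda_1|^{pk+l}$ and letting $k\to\infty$ gives $C_l\ge 1$.

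The main obstacle is the uniform choice of $p$: I need a single period that simultaneously diagonalizes the action of $A$ on the top-modulus eigenspace into multiplication by a positive real. This is exactly what the rationality hypothesis on $\lambda/\bar\lambda$ delivers, and it is the place where the theorem would fail without it (compare Theorem~\ref{thm:stable}(2) and the irrational case mentioned above). Once $p$ is chosen, the rest is a mechanical extension of the argument in Theorem~\ref{thm:degseq2}: all subtleties about conjugate pairs disappear, and the stabilization of rays in a fixed maximal cone proceeds as in Theorem~\ref{thm:degseq1}.
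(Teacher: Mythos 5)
Your proof is correct and takes essentially the same approach as the paper's: the paper's argument is just two sentences (choose $p$ so that $A^p$ has a unique positive real maximum-modulus eigenvalue, then run the argument of Theorem~\ref{thm:degseq2} on the $p$ arithmetic progressions $\deg(f_A^{pk+l})$, which amounts to using $A^l e_i$ in place of $e_i$), and your write-up fills in exactly these steps, including the correct formula for $C_l$ and the appeal to Corollary~\ref{cor:deg_bounds} for $C_l\ge 1$.

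One small point worth noting: you assert that for large $k$ the point $A^{pk+l}e_i$ lies in a \emph{fixed} maximal cone $\sigma_{i,l}$. If $w_{i,l}$ lies on the boundary between several maximal cones, the cone containing $A^{pk+l}e_i$ need not stabilize; what is true is that any maximal cone containing $A^{pk+l}e_i$ for large $k$ must also contain $w_{i,l}$ (by closedness and the ray convergence), and for all such cones the linear extension of $\psi$ agrees with $\psi$ at $w_{i,l}$, so $L_{i,l}^{(k)}(w_{i,l})=\psi(w_{i,l})$ is independent of $k$. This is the same implicit step as in the paper's proof of Theorem~\ref{thm:degseq1}, so it is not a gap relative to the paper's standard, but it is the place where a precise write-up should say ``$\psi(w_{i,l})$'' rather than ``$L_{i,l}(w_{i,l})$ for some fixed cone.''
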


\begin{proof}
Notice that there is an integer $p$ such that the eigenvalue of $A^p$ of maximum modulus is unique
and positive, so we can use the same argument as Theorem~\ref{thm:degseq2} to the subsequences
\[
\{\deg(f_A^{pk})\},\{\deg(f_A^{pk+1})\}, \cdots, \{\deg(f_A^{pk+p-1})\}.
\]
The theorem then follows.
\end{proof}

Under the assumption of Theorem~\ref{thm:periodic_constants},
the sequence $\{\deg(f_A^{k})/|\lambda_1|^{k}\}_{k=1}^\infty$
has finitely many limit points, namely, $C_0,\cdots, C_{p-1}$.
The following proposition shows a different behavior of the sequence $\{\deg(f_A^{k})/|\lambda_1|^{k}\}_{k=1}^\infty$
when we have a maximal eigenvalue $\lambda$
such that $\lambda/\bar{\lambda}$ is {\em not} a root of unity.
Therefore, we cannot expect Theorem~\ref{thm:periodic_constants} holds for general diagonalizable matrices.

\begin{prop}
\label{prop:irrational}
For a $2\times 2$ integer matrix $A$, suppose it has a conjugate pair $\lambda,\bar{\lambda}$ of eigenvalues
such that $\lambda/\bar{\lambda}$ is not a root of unity. Then the sequence $\{\deg(f_A^{k})/|\lambda|^{k}\}_{k=1}^\infty$
is dense in some closed interval contained in $[1,\infty)$.
\end{prop}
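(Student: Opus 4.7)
The plan is to express $\deg(f_A^k)/|\lambda|^k$ as $g(k\theta)$ for a continuous function $g$ on the circle, and then invoke the equidistribution of $\{k\theta \bmod 1\}$.

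Write $\lambda = |\lambda|\cdot e^{2\pi\i\theta}$; the hypothesis that $\lambda/\bar{\lambda} = e^{4\pi\i\theta}$ is not a root of unity is equivalent to $\theta \notin \Q$. Because $A$ is a real $2\times 2$ matrix with a non-real complex-conjugate pair of eigenvalues, the real canonical form furnishes a real invertible matrix $P\in\mathrm{GL}_2(\R)$ such that
\[
A \;=\; |\lambda|\cdot P\, R_\theta\, P^{-1},\qquad R_t := \begin{pmatrix} \cos 2\pi t & -\sin 2\pi t \\ \sin 2\pi t & \cos 2\pi t \end{pmatrix},
\]
and hence $A^k = |\lambda|^k\cdot P\, R_{k\theta}\, P^{-1}$. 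Fix any support function $\psi$ of a degree-one $T$-divisor on $\P^2$. By the formula $\deg(f_A) = \sum_i -\psi(A e_i)$ from (\ref{eq:deg})---which, via the function $\nu$, extends continuously to arbitrary real matrices---we get
\[
\frac{\deg(f_A^k)}{|\lambda|^k} \;=\; g(k\theta),\qquad\text{where}\quad g(t) := \sum_{i=0}^{2} -\psi\bigl(P\, R_t\, P^{-1} e_i\bigr).
\]

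The map $g : \R/\Z \to \R$ is continuous (composition of continuous maps), so its image $g(\R/\Z)$ is a closed interval $[a,b]$, being the continuous image of a compact connected set. Since $\theta$ is irrational, Weyl's theorem (or Kronecker) implies that $\{k\theta \bmod 1\}_{k\ge 1}$ is dense in $\R/\Z$, and therefore $\{g(k\theta)\}_k = \{\deg(f_A^k)/|\lambda|^k\}_k$ is dense in $[a,b]$. Finally, since $A$ is diagonalizable over $\C$ with $\rho(A) = |\lambda|$, Corollary~\ref{cor:deg_bounds} gives $\deg(f_A^k)/|\lambda|^k \ge 1$ for every $k$, whence $[a,b] \subseteq [1,\infty)$.

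There is no serious obstacle in this argument once the reduction to a rotation of the circle is made. The only point that deserves care is that although $\psi$ is merely piecewise linear (with corners along the rays of the fan), it is continuous on all of $\R^2$, so $t \mapsto \psi(P R_t P^{-1} e_i)$ is genuinely continuous on $\R/\Z$; the earlier proposition on properties of $\nu$ makes this explicit. Everything else then reduces to standard equidistribution for a single irrational angle.
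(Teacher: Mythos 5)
Your argument follows essentially the same route as the paper's: rewrite $\deg(f_A^k)/|\lambda|^k=\nu\bigl((A/|\lambda|)^k\bigr)$, observe that the matrices $(A/|\lambda|)^k$ lie on the compact circle $\{P\,R_t\,P^{-1}: t\in\R/\Z\}$ in $\M_2(\R)$, apply the continuous function $\nu$, and use irrational-rotation density to conclude that the normalized degrees are dense in the image, which is compact and connected, hence a closed interval. The reduction to the circle and the use of equidistribution are fine, and the bound $[a,b]\subseteq[1,\infty)$ via Corollary~\ref{cor:deg_bounds} is correct.

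However, there is a genuine gap: you never rule out the degenerate case in which $g(\R/\Z)$ is a single point, i.e.\ $g$ is constant. If that happened, your argument would only give density in a singleton, which makes the statement vacuous; the content of the proposition is precisely that the sequence has a continuum of limit points, in contrast to Theorem~\ref{thm:periodic_constants}. This non-degeneracy is not obvious from convexity or the geometry of the fan alone. The paper handles it by contradiction using an external result: if $\nu$ were constant $=C$ on the circle, then $\deg(f_A^k)=C\cdot|\lambda|^k$ for all $k$, so the degree sequence would satisfy the linear recurrence $d_{k+1}=|\lambda|\,d_k$; but by a theorem of Bedford and Kim~\cite[Theorem 1.1]{BK}, when $A$ has a complex eigenvalue of maximal modulus with $\lambda/\bar{\lambda}$ not a root of unity, the degree sequence of $f_A$ cannot satisfy any linear recurrence. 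You need to supply this (or an equivalent) step to complete the proof.
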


\begin{proof}
First, notice that
\begin{equation}
\label{eq:normalized_degree}
\frac{\deg(f_A^{k})}{|\lambda|^{k}}=\frac{\nu(A^k)}{|\lambda|^{k}}=\nu\Bigl(\frac {A^k} {|\lambda|^{k}}\Bigr)
=\nu\Bigl( ({A} / {|\lambda|})^k\Bigr).
\end{equation}
Since $\lambda/\bar{\lambda}$ is not a root of unity, we can conjugate $A/{|\lambda|}$ to some irrational rotation matrix,
i.e., we can write
\[
A/{|\lambda|}=P \cdot\begin{pmatrix} \cos\theta & -\sin\theta \\ \sin\theta & \cos\theta \end{pmatrix}\cdot P^{-1},
\]
for some $\theta\not\in 2\pi\Q$. Thus the closure of the set $S=\{({A} / {|\lambda|})^k|k\in\N\}$ is
\[
\overline{S}=\Bigl\{ P \cdot
   \bigl(\begin{smallmatrix} \cos t & -\sin t \\ \sin t & \cos t \end{smallmatrix}\bigr)\cdot P^{-1}
   ~\Bigl|~ t\in[0,2\pi]~\Bigr\}.
\]

$\overline{S}$ is, topologically, a circle inside $\M_2(\R)$.
Since $\nu$ is continuous, $\nu(\bar{S})=\overline{\nu(S)}$ is connected and compact.
Thus it is either a point or a closed interval.

We claim that $\overline{\nu(S)}$ cannot be a point.
If $\overline{\nu(S)}=\{C\}$, then we will have $\deg(f_A^{k})=C\cdot |\lambda|^{k}$
for all $k\in\N$.
In this case, the degree sequence $d_k=\deg(f_A^k)$ satisfies a linear recurrence $d_{k+1}=|\lambda|\cdot d_k$.
This contradicts a theorem of Bedford and Kim~\cite[Theorem 1.1]{BK}, which asserts that if
the matrix $A$ has a complex eigenvalue $\lambda$ of maximal modulus, and $\lambda/\bar{\lambda}$ is not
a root of unity, then the degree sequence for $f_A$ cannot satisfy any linear recurrence relation.

Hence, $\nu(\bar{S})=\overline{\nu(S)}$ is a closed interval. By (\ref{eq:normalized_degree}), $\nu(S)$ is exactly the set
$\{\deg(f_A^{k})/|\lambda|^{k};k\in\N\}$.
Finally, by Corollary~\ref{cor:deg_bounds}, we further know that the interval $\overline{\nu(S)}$ is contained in $[1,+\infty)$.
This concludes the proof.
\end{proof}

\subsection{Degree growth on weighted projective spaces}
\label{subsec:WPS}

Weighted projective spaces are generalizations of the usual projective spaces.
The results we obtained in the last
subsection about the degree growth of monomial maps on projective spaces can be generalized to weighted projective spaces.
We will explain briefly how the generalization is done in this section.

For arbitrary positive integers $d_0,\cdots, d_n$, the associated {\em weighted projective space},
denoted by $\P(d_0,\cdots,d_n)$, is defined as
\[
\P(d_0,\cdots,d_n)= (\C^{n+1}-\{0\})/\sim
\]
where the equivalent relation is given by $(x_0,\cdots,x_n)\sim (\zeta^{d_0} x_0,\cdots, \zeta^{d_n} x_n)$ for $\zeta\in\C^*$.

For $d=\gcd(d_0,\cdots,d_n)$, one can show that $\P(d_0,\cdots,d_n)\cong \P(d_0/d,\cdots,d_n/d)$ (\cite[Proposition 3.6 (I)]{Reid}).
Moreover, suppose that $d_0,\cdots,d_n$  have no common factor, and that $d$ is a common
factor of all $d_i$ for $i \ne j$ (and therefore $d$ is coprime to $d_j$). Then
\[
\P(d_0,\cdots,d_n)\cong \P\Bigl(\frac {d_0} d , \cdots , \frac {d_{j-1}} d , d_j , \frac {d_{j+1}} d , \cdots, \frac {d_n} d \Bigr),
\]
(see ~\cite[Proposition 3.6 (II)]{Reid}).
A weighted projective space $\P(d_0,\cdots,d_n)$ such
that no $n$ of the $d_0, d_1,\cdots,d_n$ have a common factor is called {\em well formed}. The above isomorphism allows us
only consider the weighted projective spaces which are well formed.
We will make that assumption from now on. Also, for simplicity of notation, we
will denote $\P(d_0,\cdots,d_n)$ simply by $\P$ when there is no confusion. The usual projective space, which is
$\P(1,1,\cdots,1)$, will still be denoted by $\P^n$.

To construct $\P(d_0,\cdots,d_n)$ as a toric variety, one uses the same fan as in the construction of the
projective spaces. That is, the cones are generated by proper subsets of $\{e_0,\cdots,e_n\}$. The lattice $N'$ is taken to
be generated by the vectors $e_i':= e_i/ d_i $, $i=0,\cdots, n$.
Let $\tau_i = \R_{\ge 0}\cdot e_i$ be the rays for the fan of $\P$, the well formed-ness of $\P$ implies that $e_i'$ is
the ray generator for $\tau_i$ for $i=0,\cdots,n$.

\begin{ex}
\label{ex:P123}
The lattice and fan structure of $\P(1,2,3)$ is shown in Figure~\ref{fig:P123}. The solid black dots represent the lattice
$N=\Z^2$. The unfilled dots, together with the solid dots, represent the lattice $N'=\frac 1 2\Z\oplus\frac 1 3\Z$ for
$\P(1,2,3)$. The fan structure of $\P(1,2,3)$ is the same as $\P^2$.
\begin{figure}
\label{fig:P123}
\centerline{  \includegraphics{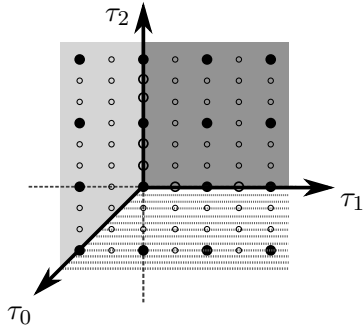} }
\begin{picture}(0,0)(0,0)
\put(-65,7){\makebox{$\tau_0$} }
\put(61,50){\makebox{$\tau_1$} }
\put(-28,120){\makebox{$\tau_2$} }
\end{picture}
\caption{The lattice and fan structure for $\P=\P(1,2,3)$.}
\end{figure}
\end{ex}

If we define the map $\theta: \Z^{n+1}\to N'$ by $\theta(a_0,\cdots,a_n)=a_0 e_0' +\cdots+ a_n e_n' $, then $\theta$ is a surjective
homomorphism, and $\ker(\theta)$ is the rank one subgroup $\Z\cdot(d_0,\cdots,d_n)$. Hence $N'\cong \Z^{n+1}/ \Z\cdot(d_0,\cdots,d_n)$,
and we also obtain a description for the dual lattice as follows:
\[
M'=(N')^\vee \cong \bigl\{ (a_0,\cdots,a_n)\in\Z^{n+1} ~|~ a_0d_0+\cdots+a_nd_n=0 \bigr\}.
\]

Recall that the group of $T$-invariant Weil
divisors is freely generated by the orbit closures $V(\tau_i)$, i.e., $\WDivT(\P)\cong\oplus_{i=0}^n \Z\cdot V(\tau_i)$.
Define the {\em weighted degree homomorphism} $\deg': \WDivT(\P) \to \Z$ by
$\deg'(a_i \cdot V(\tau_i) ) = \sum_{i=0}^n a_i d_i$.
The map is surjective since $\gcd(d_0,\cdots,d_n)=1$. Moreover, it is easy to see that the kernel
is canonically isomorphic to $M'$. Therefore, the divisor class group $A_{n-1}(\P)\cong \Z$, and the
isomorphism is induced by the weighted degree.

Let $m=\lcm(d_0,\cdots,d_n)$, one can show that a $T_N$-invariant Weil divisor $D$ is Cartier if and only if $m | \deg(D)$.
As a consequence, the image of the Picard group $\Pic(\P)\subset A_{n-1}(\P)\cong \Z$ under the  isomorphism is the subgroup $m\Z$.

Therefore, after tensoring with the group of rational numbers $\Q$, the group of $T$-invariant $\Q$-Weil divisors is
the same as that of $\Q$-Cartier divisors,
and the divisor class group is the same as the Picard group, both with $\Q$-coefficients.
Thus we will look at $\Q$-Weil divisors and rational support function in this subsection.

Let $\psi$ be a rational support function, then $\psi$ induces a $\Q$-Cartier divisor on $\P$, whose associated
$\Q$-Weil divisor is $D'=\sum_{i=0}^n -\psi(e'_i)\cdot V(\tau_i)$.
Also, $\psi$ induces
a $\Q$-Cartier divisor on $\P^n$, with associated
$\Q$-Weil divisor $D=\sum_{i=0}^n -\psi(e_i)\cdot V(\tau_i)$.
A basic fact is the following.

\begin{lem}
\label{lem:deg_WPS}
Assume the above notations, then the weighted degree of $D'$ is the same as the degree of $D$, i.e., $\deg'(D')=\deg(D)$.
\end{lem}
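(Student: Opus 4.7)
The plan is to expand both sides of the claimed equality using the definitions, and observe that the weights $d_i$ cancel exactly because the ray generators $e_i'$ of $\P(d_0,\ldots,d_n)$ are defined as $e_i/d_i$, while $\psi$ is linear on each ray.

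More concretely, first I would write out $\deg'(D')$ using the definition of the weighted degree homomorphism:
\[
\deg'(D') = \sum_{i=0}^n \bigl(-\psi(e_i')\bigr)\cdot d_i = -\sum_{i=0}^n d_i\cdot \psi(e_i/d_i).
\]
Next I would invoke the fact that a support function is $\R$-linear on each cone (in particular on each ray $\tau_i$), so $\psi(e_i/d_i) = (1/d_i)\,\psi(e_i)$. Substituting this,
\[
\deg'(D') = -\sum_{i=0}^n d_i\cdot \frac{1}{d_i}\,\psi(e_i) = -\sum_{i=0}^n \psi(e_i) = \deg(D),
\]
where the last equality is the definition of the degree on $\P^n$ (since on $\P^n$ each ray generator is exactly $e_i$ and $\deg(\sum a_i V(\tau_i))=\sum a_i$).

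There is essentially no obstacle here; the only point worth checking carefully is that $\psi$ is genuinely linear on each ray, which is guaranteed by the definition of a (rational) support function, and that the well-formedness assumption on $\P(d_0,\ldots,d_n)$ is what justifies taking $e_i'=e_i/d_i$ as the ray generator, so that the formula $D' = \sum -\psi(e_i')V(\tau_i)$ is the correct expression for the $\Q$-Weil divisor attached to $\psi$. Once these bookkeeping points are noted, the equality $\deg'(D')=\deg(D)$ is immediate from the one-line computation above.
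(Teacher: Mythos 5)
Your proof is correct and is essentially the same one-line computation as in the paper: expand $\deg'(D')$ via the weighted degree homomorphism, substitute $e_i' = e_i/d_i$, and use the (positive) homogeneity of $\psi$ on each ray to cancel the $d_i$'s. The extra bookkeeping remarks you add about linearity on rays and well-formedness are sound and merely make explicit what the paper leaves implicit.
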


\begin{proof}
This can be verified as follows:
\[
\deg'(D')=\sum_{i=0}^n -d_i\cdot\psi(e'_i) =  \sum_{i=0}^n -d_i\cdot\psi(e_i/d_i) = \sum_{i=0}^n -\psi(e_i)=\deg(D).
\]
\end{proof}

We can then discuss the pull back
map for a toric map on a weighted projective space. The pull back map on divisors can be obtained by the formula
in Theorem~\ref{thm:pullback}. The pull back map on Picard group is given by the action on the degree of a divisor,
which we also call it the {\em weighted degree} of the map, denoted by $\deg'(f_A)$.
In general, the weighted degree is a rational number, not necessarily an integer.

More precisely, let $A\in\End(N')$, then $A$ induces a toric rational map $f_A:\P\to\P$.
Using the standard basis $e_1,\cdots,e_n$ of $N'_\R\cong N_\R$, we can represent $A$ as an $n\times n$ matrix
with {\em rational} entries.
The following proposition tells us how to compute the weighted degree of $f_A$.

\begin{prop}
Assume the above notations, then the weighted degree of $f_A$ is given by
\[
\deg'(f_A) = \nu (A),
\]
where $\nu:\M_n(\R)\to\R$ is the function defined  in (\ref{eq:nu}).
\end{prop}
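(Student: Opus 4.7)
The plan is to reduce the claim to a direct computation, combining the pullback formula of Theorem~\ref{thm:pullback} with Lemma~\ref{lem:deg_WPS} and the ray-linearity of support functions. First, I would fix a rational support function $\psi$ on the fan $\fan$ of $\P^n$ that defines a $T$-Cartier $\Q$-divisor $D$ of ordinary degree one. Using the same $\fan$ but with the refined lattice $N'$, the function $\psi$ also defines a $T$-invariant $\Q$-Weil divisor $D'=\sum_{i=0}^n -\psi(e'_i)\,V(\tau_i)$ on $\P=\P(d_0,\ldots,d_n)$, where $e'_i=e_i/d_i$ is the ray generator of $\tau_i$ in $N'$. By Lemma~\ref{lem:deg_WPS}, $\deg'(D')=\deg(D)=1$, so $D'$ is a legitimate representative to use for computing $\deg'(f_A)$.

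Next, I would apply Theorem~\ref{thm:pullback} to the map $f_A:\P\dashrightarrow\P$, taking the rays $\tau_i\in\fan(1)$ with generators $e'_i$ in $N'$. This gives
\[
f_A^* D' \;=\; \sum_{i=0}^n -\psi(A e'_i)\cdot V(\tau_i).
\]
By definition of the weighted degree homomorphism $\deg':\WDivT(\P)\otimes\Q\to\Q$, and using Lemma~\ref{lem:deg_WPS} once more, the weighted degree of $f_A$ is
\[
\deg'(f_A) \;=\; \deg'(f_A^* D') \;=\; \sum_{i=0}^n -d_i\,\psi(A e'_i).
\]

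Finally, I would invoke the fact that $\psi$, being piecewise linear with respect to the fan of $\P^n$, is positively homogeneous on rays: for every $r>0$ and $v\in N_\R$, $\psi(rv)=r\psi(v)$. Since $Ae'_i=Ae_i/d_i$ in $N'_\R\cong N_\R$ and $d_i>0$, we obtain $d_i\,\psi(Ae_i/d_i)=\psi(Ae_i)$. Substituting,
\[
\deg'(f_A) \;=\; \sum_{i=0}^n -\psi(A e_i) \;=\; \nu(A),
\]
the last equality being the definition (\ref{eq:nu}), which is independent of the chosen $\psi$ by part (i) of the proposition on properties of $\nu$.

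There is essentially no obstacle; the only point that requires a little care is keeping track of the two lattices $N$ and $N'$ and the corresponding rescalings of ray generators. The positive homogeneity of $\psi$ on each ray is what converts the weighted-degree weights $d_i$ into the unweighted sum $\sum_i -\psi(Ae_i)$, so the proof hinges on matching the rescaling of ray generators against the rescaling of the weighted-degree coefficients.
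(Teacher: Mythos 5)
Your proof is correct and follows essentially the same route as the paper's: pick a degree-one $\Q$-divisor, apply the pullback formula of Theorem~\ref{thm:pullback}, use positive homogeneity of $\psi$ on rays to absorb the weights $d_i$, and invoke Lemma~\ref{lem:deg_WPS} to match up the weighted and unweighted degree-one normalizations. The only cosmetic difference is that you normalize the ordinary degree on $\P^n$ first and then transfer to $\P$ via the lemma, whereas the paper starts from a weighted-degree-one $D'$ on $\P$ and invokes the lemma at the end; the computation is identical.
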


\begin{proof}
The weighted degree can be computed
as $\deg'(f_A) = \deg(f_A^* D')$ for any $\Q$-divisor $D'$ on $\P$ of degree one. Thus, if $D'$ is a $\Q$-divisor on $\P$
of degree one, and $\psi=\psi_{D'}$ is the $\Q$-support function of $D'$, then
\[
\deg'(f_A) = \sum_{i=0}^n -d_i\cdot\psi(Ae'_i)=  \sum_{i=0}^n -\psi(Ae_i) = \nu (A).
\]
The last equality holds because the degree of the $\Q$-divisor on $\P^n$ associated to $\psi$ also has degree
one by Lemma~\ref{lem:deg_WPS}.
\end{proof}

\begin{ex}
For the matrix $A = \Bigl(\begin{smallmatrix} 1 & -\frac 3 2 \\  \frac 2 3 & 0 \end{smallmatrix}\Bigr)$, it does not preserve
the lattice $\Z^2$, but it preserve the lattice $N'=\frac 1 2\Z\oplus\frac 1 3\Z$ for $\P(1,2,3)$ in Example~\ref{ex:P123}.
In fact, we have $e'_1\mapsto e'_1+e'_2$ and $e'_2\mapsto -e'_1$. Therefore, $f_A$ is a well-defined toric rational map on
$\P(1,2,3)$. The map $f_A$ has degree $\frac {13}6$, which is not an integer.
\end{ex}

Since the weighted degree function is the same as the function $\nu$,
this tells us that the weighted degree growth of iterations of toric
rational maps on $\P$ follows the same results as the degree growth on $\P^n$.
Therefore, we can conclude that all theorems and propositions in Section~\ref{sec:deg_seq_est} also hold for
weighted degree growth of toric rational maps on weighted projective spaces.


\section{Monomial maps and polynomial maps on $(\P^1)^n$}
\label{sec:P1n}
In this section, we fix the space $(\P^1)^n=\underbrace{\P^1\times\cdots\times\P^1}_{n}\,$, and fix the fan $\fan$ to be
the one associated to $(\P^1)^n$, as described in Example~\ref{ex:P1n}.
We set the coordinate as
\[
(\P^1)^n = \Bigl\{ ([x_1;y_1],\cdots,[x_n;y_n])\ \Bigl\vert\ [x_i;y_i]\in\P^1 \text{ for $i=1,\cdots,n$}\Bigr\}.
\]

\subsection{Pulling back divisors by monomial maps on $(\P^1)^n$}
Let $D_i$, $E_i$ be the divisors defined by the equations $x_i=0$, $y_i=0$, respectively,
for $i=1,\cdots,n$.
Notice that if we set $\tau_i=\R_{\ge 0}\cdot e_i$, then $D_i=V(\tau_i)$ and $E_i=V(-\tau_i)$.
The group of torus invariant divisors are then generated by the $D_i$'s and $E_i$'s,
that is,
\[
\CDivT((\P^1)^n)\cong\Z^{2n}=\Z D_1\oplus\Z E_1\oplus\cdots\oplus\Z D_n\oplus \Z E_n.
\]

Notice that the support function for $D_i$ and $E_i$ are
\begin{align*}
\psi_{D_i}(a_1,\cdots,a_n) &= \min\{-a_i,0\},\\
\psi_{E_i}(a_1,\cdots,a_n) &= \min\{a_i,0\}.
\end{align*}

Consider the monomial map $f_A$ on $(\P^1)^n$ associated to a matrix $A=(a_{ij})\in \M_n(\Z)$.
Using Theorem~\ref{thm:pullback} and the formula for $\psi_{D_i}$ and $\psi_{E_i}$,
we obtain the following explicit description of the pull back $f_A^*$ on the group $\CDivT((\P^1)^n)$.
First, fix the ordered basis $D_1,E_1,\cdots,D_n,E_n$. With respect to this basis, $f_A^*$ is
represented by a $2n\times 2n$ integer matrix $(\alpha_{ij})_{i,j=1\cdots,n}$.
Here each $\alpha_{ij}$ is a $2\times 2$ block:
\begin{equation}
\label{eq:P1n}
\alpha_{ij}= \begin{cases} \begin{pmatrix}|a_{ji}| & 0 \\ 0 & |a_{ji}|\end{pmatrix}
                           &   \text{if  $a_{ji}\ge 0$,}  \\       \mbox{}\\
                       \begin{pmatrix}0 & |a_{ji}| \\  |a_{ji}|& 0 \end{pmatrix}
                           &   \text{if  $a_{ji}\le 0$.}\end{cases}
\end{equation}
Notice that the block $\alpha_{ij}$ corresponds to the entry $a_{ji}$, not $a_{ij}$. This is because
when we induce the map $f_A$ from $A$, the matrix $A$ acts on the lattice $N$, and we have to
take its transposition $^tA$
to act on $M=N^\vee$, see Examples 2.1 and 2.2.

Passing to the Picard groups, $D_i$ is linearly equivalent to $E_i$, so
\[
\Pic((\P^1)^n)\cong\Z^n=\Z\cdot [D_1]\oplus\cdots\oplus\Z\cdot [D_n].
\]
The pull-back $f_A^*$ on Picard group, with respect to the basis $[D_1],\cdots,[D_n]$,
is given by the matrix $\bigl(|a_{ji}|\bigr)$, i.e., each entry of $f_A^*$ is the absolute value
of the corresponding entry of the transpose matrix $^tA$. This observation about $f^*$ has an
immediate application, which is shown in the next subsection.

\subsection{The first dynamical degree of a monomial map}
\label{sec:d1_mono}
It is known that the first dynamical degree of a monomial map $f_A$ is the spectral radius of the matrix $A$
(see ~\cite[Theorem 6.2]{HP}). In this section we will provide an alternative proof of this fact.

For a compact K\"{a}hler manifold $X$, and a rational self map $f:X\to X$, let
$f^*: H^{1,1}(X;\R)\to H^{1,1}(X;\R)$ be the pull back map on the $(1,1)$ cohomology group.
Put any norm $\norm{\cdot}$ on the space $\End_\R(H^{1,1}(X;\R))$ of
$\R$-linear endomorphism on $H^{1,1}(X;\R)$.
Then the first dynamical degree of $f$, denoted by $\delta_1(f)$, is defined by
\[
\delta_1(f)=\liminf_{k\to\infty}\, \norm{(f^k)^*}^{1/k}.
\]
A property of the first dynamical degree is that it is invariant under birational conjugate
(see ~\cite[Proposition 2.6 and Corollaire 2.7]{Guedj}).
Therefore, it is a common method to find a good birational model $\tilde{X}$ of $X$ so that it is easier
to compute the dynamical degree for the conjugate $\tilde{f}:\tilde{X}\to\tilde{X}$.
In this section, we would like to use the model $(\P^1)^n$. With this model, we can obtain another
proof of the following:

\begin{thm}
\label{thm:d1_mono}
The first dynamical degree of a monomial map $f_A$ is the spectral radius of the matrix $A$.
\end{thm}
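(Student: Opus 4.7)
The plan is to exploit the matrix representation of $f_A^*$ on $\Pic((\P^1)^n)$ that was just derived, combined with the birational invariance of $\delta_1$ and Gelfand's spectral radius formula. Since $\delta_1$ is invariant under birational conjugation, and $(\P^1)^n$ is birational to $\P^n$ (both are smooth toric compactifications of $(\C^*)^n$), we may compute $\delta_1(f_A)$ on $(\P^1)^n$ rather than on $\P^n$. The advantage is that on $(\P^1)^n$ we have the explicit matrix formula $f_A^* = (|a_{ji}|)$ on $\Pic((\P^1)^n) \cong \Z^n$ with respect to the basis $[D_1],\ldots,[D_n]$.

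The next step is to understand $(f_A^k)^*$. Observe that the composition of monomial maps is a monomial map, and in fact $f_A \circ f_A = f_{A^2}$, so $f_A^k = f_{A^k}$ as rational self-maps. Applying the matrix formula to $A^k$ directly, the endomorphism $(f_A^k)^*$ on $\Pic((\P^1)^n)_\R$ is represented by the non-negative matrix $\bigl(|(A^k)_{ji}|\bigr)$. This sidesteps any algebraic stability issue entirely: we do not need $(f_A^k)^* = (f_A^*)^k$ because we have a closed-form description of $(f_A^k)^*$ for every $k$.

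Now I would fix any convenient norm $\norm{\cdot}$ on $\M_n(\R)$, for instance the $L^\infty$ entrywise norm. For any real matrix $B$ we have $\norm{(|B_{ij}|)} = \norm{B}$ in this norm, and transposition is also an isometry. Consequently,
\[
\norm{(f_A^k)^*} = \norm{(|(A^k)_{ji}|)} = \norm{A^k}.
\]
Gelfand's spectral radius formula then gives
\[
\delta_1(f_A) = \liminf_{k\to\infty} \norm{(f_A^k)^*}^{1/k} = \lim_{k\to\infty} \norm{A^k}^{1/k} = \rho(A),
\]
and the $\liminf$ can be replaced by a genuine limit.

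The only subtlety worth flagging is that different norms on $\End_\R(\Pic((\P^1)^n)_\R)$ produce the same $\delta_1$ because they are all equivalent, and the identification of the operator norm with an entrywise matrix norm (up to multiplicative constants depending only on $n$) is harmless under the exponent $1/k$. There is no real obstacle: once the matrix representation of $f_A^*$ from the previous subsection is in hand and one notes that composition of monomial maps is again monomial (so that $(f_A^k)^*$ is read off from $A^k$), the conclusion is immediate from Gelfand's formula.
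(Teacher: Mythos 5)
Your proposal is correct and follows essentially the same route as the paper: compute on $(\P^1)^n$ using the explicit matrix representation $f_A^*=(|a_{ji}|)$ on $\Pic$, note $f_A^k=f_{A^k}$ to read off $(f_A^k)^*$ directly, and conclude via Gelfand's formula. The only superficial difference is that you use the $L^\infty$ entrywise norm while the paper uses the $L^1$ norm; both exploit the same observation that taking entrywise absolute values and transposing preserve the norm, and since $\delta_1$ is norm-independent this distinction is immaterial.
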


\begin{proof}
For $X=(\P^1)^n$,
we have $H^{1,1}(X;\R)\cong\Pic(X)\otimes_\Z \R$. Also, remember that $f_A^*$ can be represented by the
matrix $\bigl(|a_{ji}|\bigr)$.
For a linear map in $\End_\R(H^{1,1}(X;\R))$, we first represent it by a matrix with
respect to the ordered basis $[D_1],\cdots,[D_n]$. Then we take
the $L^1$ norm of the matrix representation. This gives a norm on $\End_\R(H^{1,1}(X;\R))$, i.e., we set
\[
\norm{f_A^*}=\norm{\bigl(|a_{ji}|\bigr)}_1=\sum_{i,j=1}^n |a_{ji}|.
\]
It is easy to see that
\begin{equation}
\label{eq:fA*}
\norm{f_A^*}=\norm{A}_1.
\end{equation}
Furthermore, for the
$k$-th iterate of $f_A$, we know $f_A^k = f_{A^k}$.
By substituting $A$ with $A^k$ in~(\ref{eq:fA*}),
we also know $\norm{(f_A^k)^*}=\norm{(f_{A^k})^*}=\norm{A^k}_1$. Therefore,
\begin{align*}
\delta_1(f)&=\liminf_{k\to\infty}\, \norm{(f_A^k)^*}^{1/k}\\
           &=\liminf_{k\to\infty}\, \norm{A^k}_1^{1/k}\\
           &=\rho(A).
\end{align*}
\end{proof}

\subsection{Algebraic stability of monomial maps on $(\P^1)^n$}
We now turn to the discussion about algebraic stability.
The goal of this section is to show that for monomial maps on $(\P^1)^n$,
being algebraic stable is equivalent to
being strongly algebraic stable.

Let $A=(a_{ij}),B=(b_{ij})$ be two $n\times n$ integer matrices, and $f_A,f_B$ be the monomial maps on $(\P^1)^n$
induced by $A,B$, respectively. Also, let $C=AB$ be their product, and assume that $C=(c_{ij})$.
Then $f_A\circ f_B=f_{AB}=f_C$, thus we know on the Picard group, the pull back $(f_A\circ f_B)^*=f_C^*$ is represented
by the matrix whose $(i,j)$-th entry is given by $|c_{ji}|=|\sum_{k=1}^n a_{jk}b_{ki}|$. On the other hand, the
$(i,j)$-th entry of the matrix representing $f_B^*\circ f_A^*$ is
\[
\sum_{k=1}^n |b_{ki}|\cdot|a_{jk}|=\sum_{k=1}^n |a_{jk}b_{ki}|.
\]
The two numbers are equal if and only if all the non-zero summands $~a_{jk}b_{ki}~$
have the same sign, i.e., they are either all positive or all negative. Thus we know that
$(f_A\circ f_B)^*=f_B^*\circ f_A^*$ for divisor classes in the Picard group if and only if the following
condition holds:

\begin{itemize}
\item[$(\star)$] For each entry $\sum_{k=1}^n a_{jk}b_{ki}$ of the matrix $AB$, all the non-zero summands
$a_{jk}b_{ki}$ have the same sign.
\end{itemize}

The situation in the group $\CDivT((\P^1)^n)$ is similar, but more complicated. We know that for the fixed ordered basis
$D_1,E_1,\cdots,D_n,E_n$, the pull back maps $f_A^*$, $f_B^*$, and $(f_A\circ f_B)^*=f_C^*$ on $\CDivT(X)$ are
represented by the $2n\times 2n$ integer matrices $(\alpha_{ij})$, $(\beta_{ij})$, and $(\gamma_{ij})$, respectively.
Here each of the $\alpha_{ij}$, $\beta_{ij}$, and $\gamma_{ij}$ is a $2\times 2$ block as described in (\ref{eq:P1n}).
The composition $f_B^*\circ f_A^*$ is then represented by the matrix product $(\beta_{ij})\cdot(\alpha_{ij})$. We can
do the matrix multiplication by blocks, and assume $(\gamma'_{ij})=(\beta_{ij})\cdot(\alpha_{ij})$, where
\[
\gamma'_{ij}=\sum_{k=1}^n \beta_{ik}\cdot\alpha_{kj}.
\]
A simple calculation shows that
\[
\beta_{ik}\cdot\alpha_{kj}= \begin{cases} \begin{pmatrix}|a_{jk}b_{ki}| & 0 \\ 0 & |a_{jk}b_{ki}|\end{pmatrix}
                             &   \text{if  $a_{jk}b_{ki}\ge 0$,}  \\       \mbox{}\\
                           \begin{pmatrix}0 & |a_{jk}b_{ki}| \\  |a_{jk}b_{ki}|& 0 \end{pmatrix}
                             &   \text{if  $a_{jk}b_{ki}\le 0$.}\end{cases}
\]
Since the blocks $\gamma_{ij}$ are either of the form $(\begin{smallmatrix}\ell & 0 \\ 0 & \ell\end{smallmatrix})$
or of the form $(\begin{smallmatrix}0 & \ell \\ \ell & 0\end{smallmatrix})$ for some non-negative integer $\ell$,
a necessary condition for
$\gamma'_{ij}=\gamma_{ij}$ is that all block summands $\beta_{ik}\cdot\alpha_{kj}$ are of the same form, i.e.,
all of the form  $(\begin{smallmatrix}\ell & 0 \\ 0 & \ell\end{smallmatrix})$ or all of the form
$(\begin{smallmatrix}0 & \ell \\ \ell & 0\end{smallmatrix})$.
This is equivalent to the condition that all the nonzero terms $a_{jk}b_{ki}$ are of the same sign for $k=1,\cdots,n$.
On the other hand, once we have that all the nonzero terms $a_{jk}b_{ki}$ are of the same sign, it is easy to see that
we will automatically have $\gamma'_{ij}=\gamma_{ij}$. Therefore, we conclude that $(f_A\circ f_B)^*=f_B^*\circ f_A^*$
as maps on $\CDivT(X)$ if and only if the condition $(\star)$ holds again. We can summarize the above discussion as
the following proposition.

\begin{prop}
$(f_A\circ f_B)^*=f_B^*\circ f_A^*$ for $T$-invariant divisors if and only if $(f_A\circ f_B)^*=f_B^*\circ f_A^*$
for divisor classes in the Picard group, if and only if the condition $(\star)$ is satisfied.\hfill\qed
\end{prop}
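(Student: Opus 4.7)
The plan is to exploit the explicit block-matrix formula~(\ref{eq:P1n}) for the pullback on $\CDivT((\P^1)^n)$, together with the analogous absolute-value formula $\bigl(|a_{ji}|\bigr)$ on $\Pic$, and reduce both equalities to a purely arithmetic condition on the entries of $AB$, which turns out to be exactly $(\star)$.

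First I would handle the Picard-group direction, which is the cleaner of the two. With respect to the basis $[D_1],\dots,[D_n]$, both $f_A^*$ and $f_B^*$ are represented by the ``absolute-value transpose'' matrices $(|a_{ji}|)$ and $(|b_{ji}|)$. Writing $C=AB=(c_{ij})$, the $(i,j)$-entry of $f_C^*=(f_A\circ f_B)^*$ is $|c_{ji}|=\bigl|\sum_k a_{jk}b_{ki}\bigr|$, while the $(i,j)$-entry of $f_B^*\circ f_A^*$ is $\sum_k |b_{ki}|\cdot|a_{jk}|=\sum_k|a_{jk}b_{ki}|$. The elementary identity $|\sum x_k|=\sum|x_k|$ holds precisely when all nonzero $x_k$ share a common sign, so equality of the two matrix representations is equivalent to $(\star)$.

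Next I would do the $\CDivT$ direction using the same strategy, only with $2\times 2$ blocks in place of scalars. Using~(\ref{eq:P1n}) I compute the block product $\beta_{ik}\cdot\alpha_{kj}$; a short case analysis shows it equals $\bigl(\begin{smallmatrix}|a_{jk}b_{ki}|&0\\0&|a_{jk}b_{ki}|\end{smallmatrix}\bigr)$ when $a_{jk}b_{ki}\ge 0$ and the ``anti-diagonal'' block of the same entry when $a_{jk}b_{ki}\le 0$. The block $\gamma_{ij}$ for $f_C^*$ is itself of one of these two pure forms, determined by the sign of $c_{ji}$. Since any sum of diagonal and anti-diagonal $2\times 2$ nonnegative blocks can only be of pure diagonal or pure anti-diagonal type when one of these two classes contributes zero, the equality $\gamma_{ij}=\sum_k \beta_{ik}\alpha_{kj}$ forces all nonzero $a_{jk}b_{ki}$ to have the same sign. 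Conversely, when $(\star)$ holds, a direct block sum shows $\gamma'_{ij}=\gamma_{ij}$, and moreover the magnitudes match because $|c_{ji}|=\sum_k|a_{jk}b_{ki}|$ in that case.

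Combining the two directions, both ``$\CDivT$-equality $\iff (\star)$'' and ``$\Pic$-equality $\iff (\star)$'' hold, so all three assertions in the proposition are equivalent. The only real content is the case analysis of the $2\times 2$ block products; everything else is bookkeeping with the formula~(\ref{eq:P1n}). I expect no obstacle beyond carefully separating the ``diagonal'' and ``anti-diagonal'' block types, and noting that a nontrivial combination of the two forms cannot give back a block of either pure form—this is the step where the sign condition $(\star)$ is forced.
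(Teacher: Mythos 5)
Your proposal is correct and matches the paper's own argument essentially line for line: both reduce the $\Pic$ equality to the scalar identity $\bigl|\sum_k a_{jk}b_{ki}\bigr|=\sum_k |a_{jk}b_{ki}|$, and both reduce the $\CDivT$ equality by the same $2\times 2$ block multiplication using~(\ref{eq:P1n}), observing that a nonnegative sum of diagonal-type and anti-diagonal-type blocks is of pure type only when one class is absent, which is again condition $(\star)$.
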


\subsection{Stability of rational maps on $(\P^1)^n$}

In the last two subsections of this paper, we will leave the realm of monomial maps, and look at some general
phenomenon about algebraic stability of rational maps on $(\P^1)^n$.
Assume that we have a rational map $f:\C^n\dashrightarrow\C^n$ given by
\[
f(\z)=f(z_1,\cdots,z_n) = \Bigl(\frac{p_1(\z)}{q_1(\z)},\cdots,\frac{p_n(\z)}{q_n(\z)}\Bigr),
\]
where the $p_j, q_j$ are polynomials in $\z=(z_1,\cdots,z_n)$ for $i=1,\cdots,n$.
We can assume that $p_j$ and $q_j$ are pairwise relatively prime, otherwise we can divide them
by their greatest common divisor.
The map $f$ induces a rational map, also denoted by $f$, on $(\P^1)^n$, in the following way.
\begin{align*}
   & f([x_1;y_1],\cdots,[x_n;y_n]) \\
=\ & \Bigl( [P_1(x_i,y_i);Q_1(x_i,y_i)],\cdots,[P_n(x_i,y_i);Q_n(x_i,y_i)]\Bigr).
\end{align*}
Here the $P_j$ and $Q_j$ are obtained by homogenizing the polynomials $p_j, q_j$ in the $j$-th component of $f$,
with respect to every pair of variables $(x_i,y_i)$, by setting $z_i=x_i/y_i$. We will use $P_1(x_i,y_i)$
as a shorthand for $P_1(x_1,y_1,\cdots,x_n,y_n)$. The concrete formulae for $P_j$ and $Q_j$ are
\begin{align*}
P_j(x_i,y_i) &= P_j(x_1,y_1,\cdots,x_n,y_n) \\
             &= \Bigl(\prod_{i=1}^n y_i^{\max \{ \deg_{z_i}(p_j),\deg_{z_i}(q_j) \}}\Bigr)\cdot p_j(\frac{x_1}{y_1},\cdots,\frac{x_n}{y_n}),\\
Q_j(x_i,y_i) &= Q_j(x_1,y_1,\cdots,x_n,y_n) \\
             &= \Bigl(\prod_{i=1}^n y_i^{\max \{ \deg_{z_i}(p_j),\deg_{z_i}(q_j) \}}\Bigr)\cdot q_j(\frac{x_1}{y_1},\cdots,\frac{x_n}{y_n}).
\end{align*}
Thus the polynomials $P_j$ and $Q_j$ are homogeneous in each pair of variables $(x_i,y_i)$, of the same degree
$=\max \{ \deg_{z_i}(p_j),\deg_{z_i}(q_j) \}$. We denote this degree by
$\deg_{(x_i,y_i)} P_j =  \deg_{(x_i,y_i)} Q_j$,
and call such polynomials $P_j$ and $Q_j$ {\em multi-homogeneous}.

Conversely, given $~2n~$ multi-homogeneous polynomials
 $P_j(x_i,y_i)$, $Q_j(x_i,y_i)$, $j=1,\cdots,n$, which are pairwise relatively prime, and satisfy
$\deg_{(x_i,y_i)} P_j =  \deg_{(x_i,y_i)} Q_j$ for all $i,j=1,\cdots,n$. Then they induce a rational map
$f: (\P^1)^n \dashrightarrow (\P^1)^n$ by sending $([x_1;y_1],\cdots,[x_n;y_n])$ to
\[
([P_1(x_i,y_i);Q_1(x_i;y_i)],\cdots,[P_n(x_i,y_i);Q_n(x_i;y_i)]).
\]
The indeterminacy set of the rational map $f$ is given by $I_f=\cup_{j=1}^n I_{f,j}$, where $I_{f,j}$ is
the set defined by the equations $P_j=Q_j=0$.

Recall that the Picard group of $(\P^1)^n$ is
\[
\Pic((\P^1)^n)= \Z\cdot [D_1]\oplus\cdots\oplus\Z\cdot [D_n],
\]
where $D_i$ is the divisor defined by $x_i=0$, and $[D_i]$ is the linear equivalence class
of $D_i$ in $\Pic((P^1)^n)$.
Therefore, the pull back map $f^*:\Pic((\P^1)^n)\to \Pic((\P^1)^n)$,
with respect to the ordered basis
$[D_1],\cdots,[D_n]$, is represented by the matrix
\begin{align*}
\Deg(f) &= \Bigl(\max \{ \deg_{z_i}(p_j),\deg_{z_i}(q_j) \}\Bigr)_{1\le i\le n; 1\le j\le n.} \\
        &= \Bigl(\deg_{(x_i,y_i)}(P_j)\Bigr)_{1\le i\le n; 1\le j\le n.}\\
        &= \Bigl(\deg_{(x_i,y_i)}(Q_j)\Bigr)_{1\le i\le n; 1\le j\le n.}\\
\end{align*}
Therefore, the condition $(f^*)^n=(f^n)^*$ for algebraic stability can be translated into the condition $\Deg(f)^n=\Deg(f^n)$.
We will give a geometric characterization
of algebraic stable maps on $(\P^1)^n$.
Before doing that, we need to introduce some notations and facts about $(\P^1)^n$.

Suppose we equip $(\C^2)^n$ with the coordinate $(x_1,y_1,\cdots,x_n,y_n)$,
and let $E_j = \{x_j=y_j=0\}\subset (\C^2)^n$, then there is a quotient map
\begin{align*}
\pi : (\C^2)^n\backslash (\cup_{j=1}^n E_j) & \longrightarrow  (\P^1)^n \\
      (x_1,y_1,\cdots,x_n,y_n) & \longmapsto  ( [x_1;y_1],\cdots,[x_n;y_n])
\end{align*}
For each point ${\bf x}\in(\P^1)^n$, the fibre $\pi^{-1}({\bf x})$ is an algebraic torus $(\C^*)^n$.

Suppose that a rational map $f:(\P^1)^n \dashrightarrow(\P^1)^n$ is given by $P_j$ and $Q_j$, as described above.
We can {\em lift} the rational map $f$ to obtain a polynomial map $F:(\C^2)^n\to(\C^2)^n$.
$F$ is defined by the same polynomials $P_j$ and $Q_j$ as $f$, i.e.,
\[
F(x_i,y_i)=\bigl(P_1(x_i,y_i), Q_1(x_i,y_i),\cdots, P_n(x_i,y_i), Q_n(x_i,y_i) \bigr).
\]
Notice that, a point ${\bf x}\in(\P^1)^n$ is in the indeterminacy set $I_f$ if and only if
$F(\pi^{-1}({\bf x}))\subset (\cup_{j=1}^n E_j)$. When this happens, since $\pi^{-1}({\bf x})$ is
irreducible (in the Zariski topology), we must have $F(\pi^{-1}({\bf x}))\subset E_j$ for some $j$.
To conclude, we have
\[
{\bf x}\in I_f ~\Longleftrightarrow~ F(\pi^{-1}({\bf x}))\subset E_j \text{ for some $j$.}
\]

A hypersurface $V\subset (\P^1)^n$ is defined by a multi-homogeneous polynomial $\varphi=\varphi(x_1,y_1,\cdots,x_n,y_n)=0$.
We can consider the
{\em lifting of} $V$ in $(\C^2)^n$, defined by $\widetilde{V} = \overline{\pi^{-1}(V)}$.
$\widetilde{V}$ is a hypersurface in $(\C^2)^n$, and the defining equation for $\widetilde{V}$ is also $\varphi=0$.
Notice that $V$ is irreducible in the Zariski topology on $(\P^1)^n$ if and only if
$\widetilde{V}$ is irreducible in the Zariski topology on $(\C^2)^n$. This is because if we can factor
$\varphi = \varphi_1\cdot\varphi_2$, then both $\varphi_1$ and $\varphi_2$ have to be multi-homogeneous.

The following proposition and theorem characterize, geometrically, the algebraic stable maps on $(\P^1)^n$.
The proof is a modification of the method used to prove a similar proposition
on $\P^n$ by Fornaess and Sibony (\cite{FS}, see also~\cite[Proposition 1.4.3]{Sibony}).
Also, the results were already given, in the more general context of multiprojective spaces,
by Favre and Guedj~\cite[Proposition 1.7]{FG}. We include them here for completeness.

\begin{prop}
\label{prop:rat_stable}
For two rational maps $f,g: (\P^1)^n\dashrightarrow (\P^1)^n$, the relation
$\Deg(f\circ g)=\Deg(g)\cdot\Deg(f)$ holds if and only if there is no hypersurface $V\subset (\P^1)^n$ such that
$g(V\backslash I_g)\subset I_f$.
\end{prop}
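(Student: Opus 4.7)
The plan is to lift $f$ and $g$ to polynomial self-maps $F,G:(\C^2)^n\to(\C^2)^n$ via their defining multi-homogeneous polynomials $(P_j,Q_j)$ and $(R_k,S_k)$, study the composition $F\circ G$, and analyze how it descends to $(\P^1)^n$. The key observation is that $F\circ G$ is \emph{a} lift of $f\circ g$, but possibly not the minimal one: for each $j$ the pair of $j$-th components $(P_j\circ G,\,Q_j\circ G)$ may share a common multi-homogeneous factor $H_j$ that must be divided out to get the reduced representative of $f\circ g$ whose components are pairwise coprime.

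First I would compute, using multi-homogeneity of $(R_k,S_k)$ and the equality $\deg_{(x_i,y_i)}(R_k)=\deg_{(x_i,y_i)}(S_k)$,
\begin{equation*}
\deg_{(x_i,y_i)}(P_j\circ G)=\sum_{k=1}^n \deg_{(x_k,y_k)}(P_j)\cdot\deg_{(x_i,y_i)}(R_k)=\bigl(\Deg(g)\cdot\Deg(f)\bigr)_{ij},
\end{equation*}
and similarly for $Q_j\circ G$. After removing the gcd $H_j$ from the $j$-th pair, the reduced lift has multi-degree $(\Deg(g)\cdot\Deg(f))_{ij}-\deg_{(x_i,y_i)}(H_j)$ in $(x_i,y_i)$. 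Consequently $\Deg(f\circ g)=\Deg(g)\cdot\Deg(f)$ if and only if every $H_j$ is a nonzero constant, i.e., $P_j\circ G$ and $Q_j\circ G$ are coprime for each $j$.

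The remaining task is to match the existence of a non-trivial factor $H_j$ with the existence of a hypersurface $V\subset(\P^1)^n$ satisfying $g(V\setminus I_g)\subset I_f$. For one direction, if some $H_j$ is non-constant, pick an irreducible factor $\varphi$; as a factor of the multi-homogeneous polynomials $P_j\circ G$ and $Q_j\circ G$, $\varphi$ is itself multi-homogeneous, so $\widetilde V:=\{\varphi=0\}$ descends to a hypersurface $V\subset(\P^1)^n$. For $\mathbf{x}\in V\setminus I_g$, both $P_j(G(\pi^{-1}(\mathbf{x})))$ and $Q_j(G(\pi^{-1}(\mathbf{x})))$ vanish, so $F(G(\pi^{-1}(\mathbf{x})))\subset E_j$, which gives $g(\mathbf{x})\in I_{f,j}\subset I_f$. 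For the converse, given $V$ with $g(V\setminus I_g)\subset I_f$, pass to an irreducible component $V_\alpha$ with defining multi-homogeneous polynomial $\varphi_\alpha$; the Zariski closure of $g(V_\alpha\setminus I_g)$ is irreducible and contained in $I_f=\bigcup_j I_{f,j}$, hence lies in a single $I_{f,j}$. The lift $\widetilde{V_\alpha}$ is an irreducible hypersurface in $(\C^2)^n$, and on a Zariski-dense open subset of it both $P_j\circ G$ and $Q_j\circ G$ vanish; by the Nullstellensatz $\varphi_\alpha$ divides both, so $\varphi_\alpha\mid H_j$ and $H_j$ is non-constant.

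The main technical obstacle is this last step: translating the set-theoretic statement $g(V_\alpha\setminus I_g)\subset I_f$, which lives on $(\P^1)^n$ modulo the indeterminacy $I_g$, into the algebraic divisibility $\varphi_\alpha\mid P_j\circ G$ in the polynomial ring on $(\C^2)^n$. This requires using irreducibility of $V_\alpha$ to pin down a single index $j$ (via the decomposition $I_f=\bigcup_j I_{f,j}$ into closed subvarieties), irreducibility of $\widetilde{V_\alpha}$ (which relies on the multi-homogeneity of $\varphi_\alpha$), and enough density of $\pi^{-1}(V_\alpha\setminus I_g)$ inside $\widetilde{V_\alpha}$ to apply the Nullstellensatz.
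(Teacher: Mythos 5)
Your proposal is correct and follows essentially the same route as the paper: lift $f,g$ to $(\C^2)^n$, compute the multi-degree of $P_j\circ G$ to get $(\Deg(g)\cdot\Deg(f))_{ij}$, and detect the failure of the degree identity through a common multi-homogeneous factor, using irreducibility of a hypersurface component and the closedness of the $E_j$'s to pin down a single index $j$. The only organizational difference is that you make the common factor $H_j=\gcd(P_j\circ G,\,Q_j\circ G)$ an explicit intermediary and state the converse direction (no bad $V$ implies $H_j$ constant) via the Nullstellensatz rather than leaving it implicit, but the underlying ideas and technical steps match the paper's proof.
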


\begin{proof}

First, if there is such a $V$, we can assume that $V$ is irreducible. Then $U=\pi^{-1}(V\backslash I_g)$ is a nonempty
open subset of $\widetilde{V}$, hence is dense in $\widetilde{V}$ and is irreducible. The condition
$g(V\backslash I_g)\subset I_f$ means that for all $y\in U$, we have $F(G(y))\in E_j$ for
some $j$. A priori the $E_j$ may depend on $y$, but since $U$ is irreducible, this implies that $F(G(U))\subset E_j$
for some $j$. Without loss of generality, assume $j=1$.
Furthermore, since $U$ is open and dense in $\widetilde{V}$, and $E_1$ is a closed subset of $(\P^1)^n$,
we conclude that $F(G(\widetilde{V}))\subset E_1$ as well.

Suppose $V$ is defined by the multi-homogeneous polynomial $\varphi$, and for ${\bf x}\in(\P^1)^n$,
the maps $f,g$ are given by
\begin{align*}
f({\bf x}) &= \Bigl([P_1({\bf x});Q_1({\bf x})],\cdots,[P_n({\bf x});Q_n({\bf x})]\Bigr),\\
g({\bf x}) &= \Bigl([P'_1({\bf x});Q'_1({\bf x})],\cdots,[P'_n({\bf x});Q'_n({\bf x})]\Bigr).
\end{align*}
The $j$-th component of the composition map $f\circ g$ is given by the polynomials
$P''_j=P_j(P'_1,Q'_1,\cdots,P'_n,Q'_n)$ and $Q''_j=Q_j(P'_1,Q'_1,\cdots,P'_n,Q'_n)$.
A computation on degree shows that
\[
\begin{split}
\deg_{(x_i,y_i)}(P''_j) &= \deg_{(x_i,y_i)} (Q''_j)\\
                      &= \sum_{k=1}^n \deg_{(x_i,y_i)}(P'_k) \cdot \deg_{(x_k,y_k)}(P_j).
\end{split}
\]
This is the $(i,j)$-th component of the product of matrices $\Deg(g)\cdot\Deg(f)$.
On the other hand, $F(G(\widetilde{V}))\subset E_1$ implies that $\varphi$ divides both
polynomials $P''_1$ and $Q''_1$. Thus, for some $i$ such that $\deg_{(x_i,y_i)}(\varphi)>0$,
the $(i,1)$-th component of the matrix $\Deg(f\circ g)$ will be strictly less than
the $(i,1)$-th component of the product of matrices $\Deg(g)\cdot\Deg(f)$. The two matrices cannot be equal.

Conversely, it is easy to see that if there is no such hypersurface, then the polynomials
$P''_j$ and $Q''_j$ will be pairwise relatively prime, with the desired degrees. Hence we will have
$\Deg(f\circ g)=\Deg(g)\cdot\Deg(f)$.
\end{proof}

\begin{thm}
A rational map $f : (\P^1)^n\dashrightarrow (\P^1)^n$ is algebraically stable if and only if
there does not exist an integer $k$ and a hypersurface $V\subset (\P^1)^n$ such that
$f^k(V\backslash I_{f^k})\subset I_f$.
\end{thm}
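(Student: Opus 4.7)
The plan is to reduce the theorem to an iterated application of Proposition~\ref{prop:rat_stable}. Recall that $f$ is algebraically stable on $(\P^1)^n$ precisely when $\Deg(f^k) = \Deg(f)^k$ for every $k \geq 1$, since the matrix $\Deg(f)$ represents $f^*$ on $\Pic((\P^1)^n)$ in the basis $[D_1],\dots,[D_n]$.

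First I would show that the identity $\Deg(f^k) = \Deg(f)^k$ for every $k$ is equivalent, by a trivial induction, to the chain of identities
\[
\Deg(f^{k+1}) = \Deg(f^k)\cdot \Deg(f) \quad \text{for every } k\ge 1.
\]
The forward direction is immediate; the reverse is induction on $k$. Writing $f^{k+1}$ as the composition $f\circ f^k$, each of these identities is exactly the conclusion of Proposition~\ref{prop:rat_stable} applied with $g=f^k$ and the outer map $f$.

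Next I would apply Proposition~\ref{prop:rat_stable} to translate each identity into its geometric criterion: $\Deg(f\circ f^k) = \Deg(f^k)\cdot \Deg(f)$ holds if and only if there is no hypersurface $V\subset (\P^1)^n$ with $f^k(V\setminus I_{f^k}) \subset I_f$. Combining with the equivalence above, $f$ is algebraically stable if and only if no such hypersurface exists for any $k\ge 1$, which is precisely the statement of the theorem.

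There is no real obstacle; the only subtlety worth noting is that rational composition is well-defined here because $f^k$ is dominant (its component polynomials are pairwise coprime multi-homogeneous forms of the claimed bidegree whenever no earlier iterate produces a common factor), so Proposition~\ref{prop:rat_stable} applies to the pair $(f^k, f)$ at every stage. One should also mention, in passing, that the case $k=0$ is vacuous since $I_f$ has codimension at least two in $(\P^1)^n$ and therefore contains no hypersurface, so the indexing in the theorem statement is consistent.
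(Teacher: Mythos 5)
Your proposal is correct and follows the same route as the paper: it reduces the stability condition $\Deg(f^k)=\Deg(f)^k$ to the chain $\Deg(f^{k+1})=\Deg(f^k)\cdot\Deg(f)$ by induction, and then applies Proposition~\ref{prop:rat_stable} with $g=f^k$ at each stage, which is exactly the paper's one-line argument. The extra remarks about dominance of $f^k$ and the vacuity of $k=0$ are harmless and correct, though the paper leaves them implicit.
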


\begin{proof}
This is a direct consequence of the Proposition~\ref{prop:rat_stable} by using an induction
argument on $k$ and setting $g=f^k$ in the proposition.
\end{proof}

\subsection{Stability of polynomial maps on $(\P^1)^n$}
\label{subsec:stab_poly}

Recall that for $f(\z)=({p_1(\z)}/{q_1(\z)},\cdots,{p_n(\z)}/{q_n(\z)})$,
it induces a rational map on $(\P^1)^n$, and the pull back $f^*$ is represented by the matrix
\[
\Deg(f) = \Bigl(\max \{ \deg_{z_i}(p_j),\deg_{z_i}(q_j) \}\Bigr)_{1\le i\le n; 1\le j\le n.}
\]
In particular, if $f=(f_1,\cdots,f_n)$ is a polynomial map, then $p_j=f_j$ and
$\deg_{z_i}(q_j)=0$ for all $i,j$, hence
\[
\Deg(f)=\Bigl(\deg^+_{z_i}(f_j)\Bigr).
\]
Here $\deg^+_{z_i}(f_j)=\max \{ \deg_{z_i}(f_j),0 \}$ is almost the degree of $f$; the only difference is that
for the zero polynomial, we have $\deg^+_{z_i}(0)=0$ instead of the usual convention $\deg_{z_i}(0)=-\infty$.

Given a polynomial map $f$, for notational simplicity, we will denote $\deg^+_{z_i}(f_j)$ by $\deg_i(f_j)$
and just call it {\em the degree of} $f_j$ with respect to $z_i$, and
we will call $\Deg(f)$ the {\em degree matrix} of $f$.

Our next goal is to proof the following Theorem~\ref{thm:poly_stable}, which gives a family of algebraically stable
polynomial maps on $(\P^1)^n$, and a partial converse which gives a characterization for
algebraically stable polynomial maps on $(\P^1)^n$ under certain condition. First, we need to define some
terminologies.

\begin{defn}
For a polynomial $h\in\C[z_1,\cdots,z_n]$ and a monomial $\mu=z_1^{a_1}\cdots z_n^{a_n}$, we said that
$\mu$ is a {\em monomial term} of $h$ if the coefficient of $\mu$ in $h$ is not zero.
A monomial term $\mu$ of $h$ is said to be {\em the dominant term} of  $h$ if for all $i=1,\cdots,n$,
we have $\deg_i(h)=\deg_i(\mu)$.
\end{defn}

Equivalently, a monomial term $\mu=z_1^{a_1}\cdots z_n^{a_n}$ of $h$ is the dominant term of  $h$
if and only if,
for all monomial term $z_a^{b_1}\cdots z_n^{b_n}$ of $h$ , we have $a_i\ge b_i$ for $i=1,\cdots,n$.
For example, the polynomial $h=2z_1^2z_2+3z_1^2+z_1z_2-5z_2-1$ has a dominant term $z_1^2z_2$. Notice that
not all polynomials have a dominant term. For example, $h=z_1+z_2$ does not have a dominant term.

\begin{thm}
\label{thm:poly_stable}
Let $f=(f_1,\cdots,f_n)$ be a polynomial map.
\begin{enumerate}
\item
If each $f_j$ is dominated by a monomial term, then $f$ is
algebraically stable on $(\P^1)^n$.
\item
Assume that, for some iterate $f^N=(f_1^{(N)},\cdots,f_n^{(N)})$ of $f$, we have $\deg_{z_i}(f^{(N)}_j)>0$  for all $i,j=1,\cdots,n$.
Then $f$ being algebraically stable on $(\P^1)^n$ implies that each $f_j$ must have a dominant term.
\end{enumerate}
\end{thm}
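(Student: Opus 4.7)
My plan is a direct matrix-theoretic argument for both parts, using the fact, established earlier in this section, that algebraic stability of a polynomial map $f$ on $(\P^1)^n$ is equivalent to
\[
\Deg(f^k)=\Deg(f)^k\qquad\text{for all }k\in\N,
\]
where $\Deg(f)=(\deg_{z_i}(f_j))_{ij}$ is the degree matrix. Throughout, write $d_{ij}=\deg_{z_i}(f_j)$ and $\mathbf{d}_j=(d_{1j},\ldots,d_{nj})$; thus a dominant term of $f_j$, when it exists, has exponent vector $\mathbf{d}_j$.

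For part (1) I induct on $k$, with inductive hypothesis that each component $f^{(k)}_j$ has a dominant term and $\Deg(f^k)=\Deg(f)^k$. Write $(f^{k+1})_j=f_j(f^{(k)}_1,\ldots,f^{(k)}_n)$ and expand $f_j=\sum_{\mathbf{a}}c_{\mathbf{a}}\z^{\mathbf{a}}$. Each substituted term contributes a polynomial whose $z_k$-degree is at most $\sum_i a_i\deg_{z_k}(f^{(k)}_i)$, bounded above by
\[
\sum_i d_{ij}\deg_{z_k}(f^{(k)}_i)=(\Deg(f^k)\Deg(f))_{kj}=(\Deg(f)^{k+1})_{kj}.
\]
Equality requires $a_i=d_{ij}$ for every index $i$ such that the dominant monomial of $f^{(k)}_i$ involves $z_k$. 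Provided each $f^{(k)}_i$ remains non-constant, the only admissible choice across all $k$ is $\mathbf{a}=\mathbf{d}_j$, so no cancellation can occur: the product of dominant monomials $\prod_i (\mu_{f^{(k)}_i})^{d_{ij}}$ appears from a single summand, with a nonzero coefficient given by the product of leading coefficients. One reads off both the dominant monomial of $(f^{k+1})_j$ and the equality $\Deg(f^{k+1})=\Deg(f)^{k+1}$, closing the induction.

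Part (2) is a contrapositive argument. Assume $f$ is algebraically stable, and suppose some $f_j$ has no dominant term, so the coefficient of $\z^{\mathbf{d}_j}$ in $f_j$ vanishes. Then every monomial $\mathbf{a}$ actually appearing in $f_j$ satisfies $a_{i_0}<d_{i_0 j}$ for some $i_0$. Expanding $(f^{N+1})_j=f_j(f^{(N)}_1,\ldots,f^{(N)}_n)$ and invoking the hypothesis $\deg_{z_k}(f^{(N)}_i)>0$ for all $i,k$, the inequality $a_{i_0}<d_{i_0 j}$ combined with $\deg_{z_k}(f^{(N)}_{i_0})>0$ forces
\[
\sum_i a_i\deg_{z_k}(f^{(N)}_i)<\sum_i d_{ij}\deg_{z_k}(f^{(N)}_i)=(\Deg(f^N)\Deg(f))_{kj}=(\Deg(f)^{N+1})_{kj}
\]
for every such term and every $k$. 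Hence $\deg_{z_k}((f^{N+1})_j)<(\Deg(f)^{N+1})_{kj}$ for each $k$, contradicting $\Deg(f^{N+1})=\Deg(f)^{N+1}$.

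The main obstacle lies in part (1), in making the "no cancellation" step watertight when some iterate component $f^{(k)}_i$ happens to degenerate to a constant, in which case several terms of $f_j$ can contribute to the same leading monomial and cancel. The cleanest way around this should be an implicit non-degeneracy hypothesis such as dominance of $f$, guaranteeing each $f^{(k)}_i$ is non-constant; failing that, one can re-route through the geometric criterion of Proposition~\ref{prop:rat_stable} and verify directly that no hypersurface $V\subset(\P^1)^n$ satisfies $f^k(V\setminus I_{f^k})\subset I_f$, using that the indeterminacy locus lies entirely at infinity and is controlled by the dominant monomials.
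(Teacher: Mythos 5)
Your proof follows essentially the same route as the paper: part (1) is an induction built on a composition lemma (if every component of $f$ and of $g$ has a dominant term, then so does every component of $fg$, and $\Deg(fg)=\Deg(g)\Deg(f)$), and part (2) is the contrapositive via a strict degree inequality, using the positivity hypothesis $\deg_{z_k}(f_j^{(N)})>0$ to force $\deg_{z_k}\bigl((f^{N+1})_j\bigr) < (\Deg(f)^{N+1})_{kj}$ when $f_j$ lacks a dominant term.

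The ``no cancellation'' worry you flag in part (1) is a genuine issue, and in fact the paper's own argument passes over it silently. The paper writes $(fg)_j = c_j\cdot\mu_j(\nu_1,\ldots,\nu_n)+\{\text{lower order terms}\}$ and implicitly assumes $c_j\ne 0$; but if some $g_k$ is a (nonzero) constant, distinct monomials $\mathbf{a}\ne\mathbf{d}_j$ of $f_j$ can satisfy $\Deg(g)\mathbf{a}=\Deg(g)\mathbf{d}_j$ and their contributions to the top-degree monomial of $(fg)_j$ may cancel. Concretely, take $f_1=z_1(z_2-1)$, $f_2=1$; both components have dominant terms ($z_1z_2$ and $1$), yet $(f\circ f)_1 = f_1\cdot(f_2-1) = 0$, so $\Deg(f^2)=0\ne\Deg(f)^2$ and $f$ is not algebraically stable. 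The fix is exactly what you suggest: add a non-degeneracy hypothesis such as no $f_j$ constant (implied by dominance of $f$). Then $\mathbf{d}_j\ne 0$ for each $j$, one shows inductively that no iterate has a constant component, hence $\Deg(g)$ has no zero column for $g=f^k$, which forces $\Deg(g)\mathbf{a}\ne\Deg(g)\mathbf{d}_j$ for $0\le\mathbf{a}\lneq\mathbf{d}_j$ and rules out cancellation. With that extra hypothesis your argument closes cleanly and matches the paper's intent; the detour through the geometric criterion of Proposition~\ref{prop:rat_stable} is a valid alternative but unnecessary once the hypothesis is in place.
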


We will prove the theorem in steps. First, observe that, if each $f_j$ has a dominant term
$\mu_j=z_1^{a_{1j}}\cdots z_n^{a_{nj}}$, then we know $\deg_i(f_j)=a_{ij}$, and therefore
$\Deg(f)=(a_{ij})$.

Next, if $f=(f_1,\cdots,f_n)$ and $g=(g_1,\cdots,g_n)$ are two polynomial maps, such that each of the
$f_j$ and $g_k$ has a dominant term, say $\mu_j=z_1^{a_{1j}}\cdots z_n^{a_{nj}}$
and $\nu_k= z_1^{b_{1k}}\cdots z_n^{b_{nk}}$, respectively.
Consider the $j$-th component of the composition $fg=f\circ g$. It will be of the following form:
\begin{align*}
(fg)_j &= f_j(g_1,\cdots,g_n) \\
       &= c_j\cdot\mu_j(\nu_1,\cdots,\nu_n)+\{\text{lower order terms}\}\\
       &= c_j\cdot\nu_1^{a_{1j}}\cdots \nu_n^{a_{nj}}+\{\text{lower order terms}\}\\
       &= c_j\cdot\prod_{k=1}^n\Bigl(z_1^{b_{1k}}\cdots z_n^{b_{nk}}\Bigr)^{a_{kj}}+\{\text{lower order terms}\}
\end{align*}
where $c_j$ is some constant. That is, $\mu_j(\nu_1,\cdots,\nu_j)$ is the dominant term
of $(fg)_j$, and the degree
\[
\deg_i((fg)_j)=\sum_{k=1}^n b_{ik}a_{kj},
\]
which is the $(i,j)$-th component of the product of matrices $\Deg(g)\cdot\Deg(f)$.
We summarize the above discussion as the following proposition.

\begin{prop}
If $f=(f_1,\cdots,f_n)$ and $g=(g_1,\cdots,g_n)$ are two polynomial maps, such that each of the
$f_j$ and $g_j$ has a dominant term, then the composition $(fg)$ is also a polynomial map such that each
component has a dominant term. Furthermore, for the degree matrix of $(fg)$, we have
\[
\Deg(fg)=\Deg(g)\cdot\Deg(f).
\]
\hfill\qed
\end{prop}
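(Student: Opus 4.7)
The plan is to track dominant terms through the substitution $z_k \mapsto g_k$, using that multiplication of polynomials with dominant terms behaves multiplicatively on the leading monomials.

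The first step would be a multiplicative lemma: if $p, q \in \C[z_1,\ldots,z_n]$ have dominant terms $c_p \mu_p$ and $c_q \mu_q$, then $pq$ has dominant term $c_p c_q \mu_p \mu_q$. The proof is quick. Every monomial appearing in $p$ (resp.\ $q$) has multi-degree componentwise $\le \deg(\mu_p)$ (resp.\ $\deg(\mu_q)$), so every monomial in $pq$ has multi-degree componentwise $\le \deg(\mu_p) + \deg(\mu_q)$. The top monomial $\mu_p\mu_q$ can be realised only by the pair $\mu_p \cdot \mu_q$, since this is the unique factorisation achieving both componentwise upper bounds; hence its coefficient in $pq$ is exactly $c_p c_q \ne 0$. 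Iterating this lemma yields that $g_1^{\alpha_1}\cdots g_n^{\alpha_n}$ has dominant term $\prod_k d_k^{\alpha_k}\nu_k^{\alpha_k}$, where $d_k\nu_k$ denotes the dominant term of $g_k$ and $\nu_k = z_1^{b_{1k}}\cdots z_n^{b_{nk}}$.

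Next I would expand
\[
(fg)_j \;=\; f_j(g_1,\ldots,g_n) \;=\; \sum_{\alpha} c_\alpha\, g_1^{\alpha_1}\cdots g_n^{\alpha_n},
\]
the sum running over the monomials $z^\alpha$ appearing in $f_j$. Since $\mu_j = z_1^{a_{1j}}\cdots z_n^{a_{nj}}$ is the dominant term of $f_j$, every such $\alpha$ satisfies $\alpha_k \le a_{kj}$ componentwise, with equality iff $\alpha = \vec{a}_j := (a_{1j},\ldots,a_{nj})$. The candidate dominant term of $(fg)_j$ is $M := \prod_k \nu_k^{a_{kj}}$, with $\deg_i(M) = \sum_k a_{kj}b_{ik}$, which matches the $(i,j)$-entry of $\Deg(g)\cdot\Deg(f)$. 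The upper bound $\deg_i((fg)_j)\le\sum_k a_{kj}b_{ik}$ is immediate from the termwise estimate $\deg_i(g^\alpha)\le\sum_k\alpha_k b_{ik}$. For the matching lower bound, I would argue no cancellation occurs at $M$: the monomial $M$ can receive a contribution from $g^\alpha$ only when $\deg_i(g^\alpha) \ge \deg_i(M)$ for every $i$, which combined with $\alpha_k \le a_{kj}$ forces $(a_{kj}-\alpha_k)b_{ik}=0$ for all $i,k$, and hence $\alpha_k = a_{kj}$ whenever $\nu_k$ is non-trivial. In the generic case (each $g_k$ non-constant), this pins down $\alpha = \vec{a}_j$, so the coefficient of $M$ in $(fg)_j$ equals $c_j\prod_k d_k^{a_{kj}}\ne 0$. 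This simultaneously gives $\deg_i((fg)_j) = \sum_k a_{kj}b_{ik}$, identifies $M$ as the dominant term, and hence proves $\Deg(fg) = \Deg(g)\cdot\Deg(f)$.

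The main obstacle is the degenerate possibility that some $g_k$ is a constant, in which case $\nu_k = 1$ and several $\alpha$'s superimpose on $M$, so one must check their sum is nonzero. I would handle this by splitting the indices into $S = \{k : g_k\text{ non-constant}\}$ and $T = \{k : g_k\text{ constant}\}$, and recognising the total coefficient of $M$ as $\bigl(\prod_{k\in S} d_k^{a_{kj}}\bigr)\cdot \phi_j(g_T)$, where $\phi_j(z_T)$ is the coefficient of $z_S^{\vec{a}_S}$ in $f_j$ viewed as a polynomial in $z_S$. The polynomial $\phi_j$ inherits a dominant term $c_j z_T^{\vec{a}_T}$ from $\mu_j$, which is the nondegeneracy needed for the proposition; this is automatic in the intended application where the components of $g$ are nonconstant.
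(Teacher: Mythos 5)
Your argument in the main case (every $g_k$ non-constant) is correct and takes essentially the same route as the paper: expand $f_j(g_1,\dots,g_n)$ over the monomial terms of $f_j$ and identify $\mu_j(\nu_1,\dots,\nu_n)$ as the dominant term of the composition. The paper disposes of everything else as ``lower order terms''; your multiplicative lemma, together with the degree-equality argument showing that the top monomial $M$ can only receive a contribution from $\alpha=\vec{a}_j$, makes the no-cancellation step explicit, which the paper's two-line computation omits.

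Your patch for the degenerate case (some $g_k$ constant) does not work, though. That $\phi_j$ has a dominant term only says $\phi_j$ is a nonzero polynomial; it does not prevent $\phi_j$ from vanishing at the specific point $(d_k)_{k\in T}$, which is what you actually need. Concretely, take $n=2$, $f=(z_1^2z_2-z_1^2,\ z_2)$ and $g=(z_1,\ 1)$. Every component of $f$ and $g$ has a dominant term ($z_1^2z_2$, $z_2$, $z_1$, and the constant monomial $1$, respectively), and in your notation $S=\{1\}$, $T=\{2\}$, $\phi_1(z_2)=z_2-1$, which has dominant term $z_2$ but satisfies $\phi_1(1)=0$. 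Indeed $(f\circ g)_1=0$, so $(fg)_1$ has no dominant term, and the $(1,1)$ entry of $\Deg(fg)$ is $0$ while that of $\Deg(g)\cdot\Deg(f)$ is $2$. So the proposition as literally stated fails once constant components are allowed; the paper's proof silently ignores this case as well. The honest fix is to add the hypothesis that no $g_k$ is constant (automatic for iterates of a dominant polynomial map, which is all the paper uses), rather than to appeal to $\phi_j$ having a dominant term.
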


As a corollary of the proposition, we can now prove the first part of Theorem~\ref{thm:poly_stable}.

\begin{cor*}
If $f=(f_1,\cdots,f_n):\C^n\to\C^n$ is a polynomial map, and if each $f_j$ has a dominant term, then
$f$ is algebraically stable on $(\P^1)^n$.
\end{cor*}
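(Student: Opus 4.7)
The plan is a clean induction on $k$, using the Proposition that immediately precedes the corollary. By definition, algebraic stability on $(\P^1)^n$ is the condition $(f^k)^* = (f^*)^k$ as endomorphisms of $\Pic((\P^1)^n)_\Q$; with respect to the basis $[D_1],\dots,[D_n]$ the pull-back $f^*$ is represented by the degree matrix $\Deg(f)$, so algebraic stability reduces to the matrix identity
\[
\Deg(f^k) = \Deg(f)^k \quad \text{for all } k\ge 1.
\]

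I would establish, by induction on $k$, the joint statement that (i) each component of $f^k$ is dominated by a monomial term, and (ii) $\Deg(f^k) = \Deg(f)^k$. The base case $k=1$ is precisely the hypothesis of the corollary together with a trivial matrix identity. For the inductive step, I apply the Proposition to the pair of polynomial maps $f$ and $g = f^{k-1}$: the inductive hypothesis ensures that every component of $f^{k-1}$ has a dominant term, and the hypothesis of the corollary does the same for $f$. The Proposition then produces a dominant monomial term for every component of the composition $f \circ f^{k-1} = f^k$, establishing (i) at stage $k$, and yields
\[
\Deg(f^k) \;=\; \Deg(f^{k-1})\cdot\Deg(f) \;=\; \Deg(f)^{k-1}\cdot\Deg(f) \;=\; \Deg(f)^k,
\]
which closes the induction. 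Comparing this with the matrix reformulation of algebraic stability given above completes the proof.

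There is no real obstacle: once the preceding Proposition is in hand the corollary is essentially a formal consequence, because the Proposition already handles exactly the bookkeeping that could otherwise cause degree drops (cancellation of leading terms) under composition. The only point that requires care is the order of multiplication in the degree-matrix identity — the Proposition reads $\Deg(f\circ g)=\Deg(g)\cdot\Deg(f)$, which reflects the contravariance of pull-back on $\Pic((\P^1)^n)_\Q$ — but once this convention is fixed, the induction runs without incident.
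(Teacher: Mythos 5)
Your proof is correct and follows essentially the same route as the paper's: both invoke the preceding Proposition and run an induction on $k$ to get $\Deg(f^k)=\Deg(f)^k$, then identify $\Deg(f)$ with the matrix of $f^*$ on $\Pic((\P^1)^n)$. You merely spell out the induction (including the persistence of dominant terms under composition) that the paper leaves implicit.
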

\begin{proof}
We know that for all $k$, the iterate $f^k$ is also a polynomial map such that every component
has a dominant term. Hence an induction argument shows that
\[
\Deg(f^k)=(\Deg(f))^k.
\]
Since the degree matrix represents $f^*$ with respect to the ordered basis $[D_1],\cdots,[D_n]$,
we also have
\[
(f^k)^*=(f^*)^k.
\]
\end{proof}

\begin{cor*}
If $f=(f_1,\cdots,f_n):\C^n\to\C^n$ is a polynomial map, and if each $f_j$ has a dominant term, then
the first dynamical degree of $f$ is an algebraic integer.
\end{cor*}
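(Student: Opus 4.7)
The plan is to combine algebraic stability with the Perron--Frobenius theorem. By the previous corollary, if each $f_j$ has a dominant term, then $f$ is algebraically stable on $(\P^1)^n$, which gives $(f^k)^* = (f^*)^k$ as endomorphisms of $\Pic((\P^1)^n)_\R$ for every $k$. Using any submultiplicative norm on $\End(H^{1,1}((\P^1)^n;\R))$, the first dynamical degree
\[
\delta_1(f) = \liminf_{k\to\infty} \lVert (f^k)^*\rVert^{1/k} = \liminf_{k\to\infty} \lVert (f^*)^k\rVert^{1/k}
\]
equals the spectral radius of $f^*$ by Gelfand's formula.

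Next I would identify $f^*$ explicitly. With respect to the ordered basis $[D_1],\ldots,[D_n]$, the operator $f^*$ is represented by the degree matrix
\[
\Deg(f) = \bigl(\deg_i(f_j)\bigr)_{1\le i,j\le n},
\]
which is a matrix of non-negative integers. Hence $\rho(f^*) = \rho(\Deg(f))$.

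Finally, I would invoke the Perron--Frobenius theorem for non-negative matrices, which guarantees that the spectral radius of $\Deg(f)$ is itself an eigenvalue of $\Deg(f)$. Since $\Deg(f)$ has integer entries, its characteristic polynomial is a monic polynomial with integer coefficients, so every eigenvalue, in particular $\rho(\Deg(f)) = \delta_1(f)$, is an algebraic integer. No step here is genuinely hard; the only point that requires care is ensuring that the spectral radius is actually realized as an eigenvalue (so that we conclude \emph{algebraic integer} rather than merely the modulus of one), which is exactly what Perron--Frobenius supplies thanks to non-negativity of the entries of $\Deg(f)$.
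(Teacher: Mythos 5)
Your argument is correct and follows essentially the same route as the paper: algebraic stability (from the preceding corollary) lets you identify $\delta_1(f)$ with the spectral radius of the integer matrix $\Deg(f)$, which is therefore an algebraic integer. The extra care you take via Perron--Frobenius — to guarantee the spectral radius is an eigenvalue and not merely the modulus of one — is a clean way to close the argument but is not strictly necessary, since the modulus of any algebraic integer $\lambda$ is itself an algebraic integer (it is a root of $x^{2}-\lambda\bar{\lambda}$, and $\lambda\bar{\lambda}$ is an algebraic integer).
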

\begin{proof}
We know that $f$ is algebraically stable, and $f^*$ is represented by the degree matrix $\Deg(f)$.
The first dynamical degree of $f$ is the spectral radius of the degree matrix, an integer matrix.
Hence  the first dynamical degree is an algebraic integer.
\end{proof}
There is a conjecture proposed by Bellon and Viallet (see ~\cite[Conjecture 1.1]{HP}),
namely, the first dynamical degree of every rational map is an algebraic integer.
The corollary shows that the conjecture holds for the case of polynomial maps with
a dominant term for each component.

Notice that every monomial polynomial has a dominant term, namely the monomial itself.
Hence we obtain another proof that every monomial polynomial map is algebraically stable on $(\P^1)^n$.

Now we turn to the second part of Theorem~\ref{thm:poly_stable}.

\begin{prop}
Let $f_1,\cdots,f_n,g_1,\cdots g_n\in \C[z_1,\cdots,z_n]$ be polynomials
such that $\deg_i(g_j)> 0$ for all $i,j=1,\cdots,n$. They induce polynomial maps
$f=(f_1,\cdots,f_n)$ and $g=(g_1,\cdots,g_n):\C^n\to\C^n$. If we have
\[
\Deg(f g)=\Deg(g)\cdot\Deg(f),
\]
then every component $f_j$ must have a dominant monomial term.
\end{prop}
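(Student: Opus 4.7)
Fix an index $j\in\{1,\dots,n\}$ and set $d_k = \deg_{z_i}^+(f_j) = \deg_k(f_j)$; our aim is to exhibit a single monomial $\mu^* = z_1^{d_1}\cdots z_n^{d_n}$ that appears in $f_j$ with nonzero coefficient. Expand $f_j = \sum_{\mu} c_\mu \mu$, where the sum runs over distinct monomials $\mu = z_1^{a_1^\mu}\cdots z_n^{a_n^\mu}$ whose coefficient $c_\mu\in\C^*$ is nonzero; by definition $a_k^\mu \le d_k$ for every $\mu$ and every $k$, and for each $k$ equality is attained by at least one $\mu$.

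The plan is to compare three quantities, the middle one being the decisive one: for each fixed $i$,
\[
\deg_{z_i}\bigl((fg)_j\bigr) \;\le\; \max_\mu \,|\mu|_i \;\le\; \sum_{k=1}^n \deg_i(g_k)\cdot d_k \;=\; \bigl(\Deg(g)\cdot\Deg(f)\bigr)_{ij},
\]
where $|\mu|_i := \sum_k a_k^\mu \deg_i(g_k)$. The first inequality holds because $(fg)_j = \sum_\mu c_\mu\,g_1^{a_1^\mu}\cdots g_n^{a_n^\mu}$ and potentially the leading $z_i$-parts of different summands can cancel. The key point enabling the first inequality to be meaningful is that for each individual summand one has \emph{equality} $\deg_{z_i}(g_1^{a_1^\mu}\cdots g_n^{a_n^\mu}) = |\mu|_i$: viewing each $g_k$ as a polynomial in $z_i$ over the integral domain $\C[z_1,\dots,\widehat{z_i},\dots,z_n]$, the hypothesis $\deg_i(g_k)>0$ ensures the leading $z_i$-coefficient is a nonzero element of that domain, and the absence of zero divisors makes degrees add under multiplication. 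The second inequality is immediate from $a_k^\mu \le d_k$ and $\deg_i(g_k)\ge 0$.

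Now the assumption $\Deg(fg) = \Deg(g)\cdot\Deg(f)$ says that the outer terms of the display above coincide, so both inequalities are in fact equalities. From equality in the second inequality, $\max_\mu |\mu|_i = \sum_k d_k\,\deg_i(g_k)$, there exists a monomial $\mu^*$ of $f_j$ with $\sum_k a_k^{\mu^*}\deg_i(g_k) = \sum_k d_k\deg_i(g_k)$; since each summand satisfies $a_k^{\mu^*}\deg_i(g_k)\le d_k\deg_i(g_k)$ with the strict positivity $\deg_i(g_k)>0$, the only way the total sums agree is $a_k^{\mu^*} = d_k$ for every $k$. Thus $\mu^* = z_1^{d_1}\cdots z_n^{d_n}$ appears in $f_j$ with nonzero coefficient, i.e., $f_j$ has a dominant term.

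The main point requiring care is the second inequality collapsing to an equality under the strict positivity of the entries of $\Deg(g)$; if some $\deg_i(g_k)$ were zero, the above squeeze would only force $a_k^{\mu^*}=d_k$ for those $k$ with $\deg_i(g_k)>0$, and one would need to vary $i$ and assemble partial information to obtain a single $\mu^*$ working across all $k$, which in general need not be possible. This is exactly why the hypothesis $\deg_{z_i}(g_j)>0$ for all $i,j$ is built into the statement, and why in Theorem~\ref{thm:poly_stable}(2) one passes to an iterate $f^N$ before extracting the conclusion.
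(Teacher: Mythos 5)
Your proof is correct and takes essentially the same approach as the paper, which argues by contrapositive (assuming some $f_j$ lacks a dominant term and deriving a strict inequality in the $(i,j)$ entry of $\Deg(fg)$) rather than directly extracting the dominant monomial from the equality, but the underlying chain of inequalities and the decisive use of the strict positivity of $\deg_i(g_k)$ are the same. You are somewhat more careful than the paper about why $\deg_{z_i}(g_1^{a_1}\cdots g_n^{a_n}) = \sum_k a_k\deg_{z_i}(g_k)$, justifying it via the integral-domain argument where the paper just asserts it.
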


\begin{proof}
Assume otherwise, i.e., some $f_j$ does not have a dominant term, we want to show the two
matrices are different. Under the assumption, for every monomial term
$\mu=z_1^{a_1}\cdots x_n^{a_n}$, there is some $a_k<\deg_k(f_j)$; without loss of generality
we can assume $a_1<\deg_1(f_j)$.
Consider the composition
$\mu\circ g=\mu(g_1,\cdots,g_n)$, it is easy to see that, for all $i=1,\cdots,n$,
\begin{align*}
\deg_i(\mu\circ g) &= a_1\cdot \deg_i(g_1) + \cdots + a_n\cdot\deg_i(g_n)\\
                   &< \deg_1(f_j)\cdot \deg_i(g_1) + \cdots + \deg_n(f_j)\cdot\deg_i(g_n)\\
\end{align*}
The inequality is strict because $a_1<\deg_1(f_j)$ and $\deg_i(g_1)>0$. Therefore, we know
\[
\deg_i(f_j\circ g)< \deg_1(f_j)\cdot \deg_i(g_1) + \cdots + \deg_n(f_j)\cdot\deg_i(g_n),
\]
Notice that the term on the left is the $(i,j)$-th entry of the matrix $\Deg(f g)$, whereas the term
on the right is the $(i,j)$-th entry of the matrix $\Deg(g)\cdot\Deg(f)$.
Hence, the two matrices are different.
\end{proof}

The condition $\deg_i(g_j)> 0$ in the proposition is essential. For example, let
$f(z_1,z_2,z_3)=g(z_1,z_2,z_3)=(z_2,z_3,z_1+z_2)$, then we still have
$\Deg(f g)=\Deg(g)\cdot\Deg(f)$, but $f_3=z_1+z_2$ does not have a dominant monomial term.

\begin{cor*}
Suppose that $f=(f_1,\cdots,f_n):\C^n\to\C^n$ is a polynomial map, and for some iterate
$f^N=(f_1^{(N)},\cdots,f_n^{(N)})$ of $f$,
we have $\deg_i(f_j^{(N)})>0$ for all $i,j=1,\cdots,n$.
If $\Deg(f^k)=\Deg(f)^k$ for all $k$, then every component $f_j$ of $f$ must have a dominant monomial term.
\end{cor*}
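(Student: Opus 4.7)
The plan is to deduce the corollary directly from the preceding proposition by taking $g = f^N$ in its statement. The key observation is that the algebraic stability hypothesis $\Deg(f^k)=\Deg(f)^k$ for all $k$ gives precisely the matrix identity that the proposition requires as input.

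First I would set $g := f^N$. Then $f \circ g = f^{N+1}$, so applying the stability hypothesis twice gives
\[
\Deg(f \circ g) = \Deg(f^{N+1}) = \Deg(f)^{N+1} = \Deg(f)^N \cdot \Deg(f) = \Deg(f^N)\cdot \Deg(f) = \Deg(g)\cdot \Deg(f).
\]
Thus the matrix identity $\Deg(f g)=\Deg(g)\cdot\Deg(f)$ holds. Meanwhile, the standing assumption on the iterate $f^N$ is that $\deg_i(f_j^{(N)})>0$ for all $i,j$, which is exactly the positivity hypothesis required on $g$ in the previous proposition.

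Invoking the proposition then yields that every component $f_j$ of $f$ has a dominant monomial term, which is the desired conclusion. There is essentially no obstacle here since all of the real work was done in the preceding proposition; the corollary is just a packaging step that rewrites the stability condition in a form the proposition can consume. The only minor care needed is to make sure that the iterate used as $g$ has strictly positive partial degrees in every variable and every component, which is built into the hypothesis of the corollary.
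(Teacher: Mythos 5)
Your proof is correct and takes exactly the paper's approach: the paper's own argument is the one-liner ``Apply $f$ and $g=f^N$ to the proposition,'' and you have simply spelled out the routine verification that $\Deg(f\circ f^N)=\Deg(f^N)\cdot\Deg(f)$ follows from the stability hypothesis and that the positivity condition on $g=f^N$ is the corollary's standing assumption.
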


\begin{proof}
Apply $f$ and $g=f^N$ to the proposition.
\end{proof}

This also concludes the proof of Theorem~\ref{thm:poly_stable}.\hfill\qed


\begin{bibdiv}
\begin{biblist}

\bib{BK}{article}{
title={Linear recurrences in the degree sequences of monomial mappings},
author={Bedford, Eric},
author={Kim, Kyounghee},
journal={Ergodic Theory Dynam. Systems},
volume={28},
number={5},
date={2008},
pages={1369--1375}
eprint={arXiv:0710.1642 [math.DS]}
}

\bib{Dan}{article}{
title={The geometry of toric varieties},
author={Danilov, V. I.},
journal={Russ. Math. Surv.},
volume={33},
date={1978},
pages={97--154}
}

\bib{Fa}{article}{
title={Les applications monomiales en deux dimensions},
author={Favre, Charles},
journal={Michigan Math. J.},
volume={51},
number={3},
date={2003},
pages={467--475}
eprint={arXiv:0210025 [math.CV]}
}

\bib{FG}{article}{
title={Dynamique des applications rationnelles des espaces multiprojectifs},
author={Favre, Charles},
author={Guedj, Vincent},
journal={Indiana Univ. Math. J.},
volume={50},
number={2},
date={2001},
pages={881--934}
}

\bib{FS}{article}{
title={Complex dynamics in higher dimension. II},
author={Fornaess, John Erik},
author={Sibony, Nessim},
book={
title={Modern methods in complex analysis},
series={Annals of Math Studies},
volume={137},
publisher={Princeton University Press}
address={Princeton, NJ},
year={1995},
},
pages={135--182},
}

\bib{Fu}{book}{
title={Introduction to Toric Varieties},
author={Fulton, William},
series={Annal of Math Studies}
volume={131}
date={1993},
publisher={Princeton University Press},
address={Princeton, NJ}
}

\bib{Guedj}{article}{
title={Propri\'{e}t\'{e}s ergodiques des applications rationnelles},
author={Vincent Guedj},
eprint={arXiv:0611302 [math.CV]}
}

\bib{HP}{article}{
title={Degree-growth of monomial maps},
author={Hasselblatt, Boris},
author={Propp, James},
journal={Ergodic Theory Dynam. Systems},
volume={27},
number={5},
date={2007},
pages={1375--1397}
eprint={arXiv:0604521 [math.DS]}
}

\bib{JW}{article}{
title={Stabilization of monomial maps},
author={Jonsson, Mattias},
author={Wulcan, Elizabeth},
eprint={arXiv:1001.3938 [math.DS]}
}

\bib{Reid}{article}{
title={Graded rings and varieties in weighted projective space},
author={Reid, Miles},
eprint={http://www.warwick.ac.uk/~masda/surf/more/grad.pdf}
}

\bib{Sibony}{article}{
title={Dynamics of rational maps on $\P^n$},
author={Sibony, Nessim},
book={
title={Complex dynamics and geometry},
series={SMF/AMS Texts and Monographs},
volume={10},
publisher={American Mathematical Society}
address={Providence, RI},
year={2003},
},
pages={85--166},
}

\end{biblist}
\end{bibdiv}

\end{document}